\title{Error analysis of Nitsche's and discontinuous Galerkin
	methods of a reduced Landau-de Gennes problem 
}
\author{Ruma Rani Maity\footnote{
		Department of Mathematics, Indian Institute of Technology Bombay, Powai, Mumbai 400076, India. Email. ruma@math.iitb.ac.in} $\;\;$
	Apala Majumdar* \footnote{* Department of Mathematics And Statistics, University of Strathclyde, 16 Richmond St, Glasgow G1 1XQ, United Kingdom. Visiting Professor,  Indian Institute of Technology Bombay, Powai, Mumbai 400076, India. Email. apala.majumdar@strath.ac.uk}        
	$\;\;$ Neela Nataraj\footnote{Department of Mathematics, Indian Institute of Technology Bombay, Powai, Mumbai 400076, India. Email. neela@math.iitb.ac.in}
}
\tikzstyle{every picture}+=[font=\footnotesize]
\setlist{noitemsep, topsep=0.8ex, partopsep=0pt
	, leftmargin=3em}
\setlist[1]{labelindent=\parindent}
\newlist{axioms}{enumerate}{1}
\setlist[axioms]{font=\bfseries}
\newlist{alphenum}{enumerate}{1}
\setlist[alphenum]{label=\textbf{(\alph*)}, leftmargin=4em}
\newlist{alphienum}{enumerate}{1}
\setlist[alphienum]{label=\textit{(\alph*)}}
\newlist{romanenum}{enumerate}{1}
\setlist[romanenum]{label=\textit{(\roman*)}}
\newlist{romaninenum}{enumerate*}{1}
\setlist[romaninenum]{label=\textit{(\roman*)}}
\crefname{equation}{\unskip}{\unskip}
\newtheorem{thm}{Theorem}[section]
\newtheorem{lem}[thm]{Lemma}
\theoremstyle{definition}
\theoremstyle{remark}
\newtheorem{rem}[thm]{Remark}
\numberwithin{equation}{section}
\newcommand{\e}{\mathbf{e}}
\newcommand{\V}{\mathbf{V}}
\newcommand{\X}{\mathbf{X}}
\newcommand{\h}{\mathbf{H}}
\newcommand{\abs}[1]{\vert #1\vert}
\newcommand{\dx}{\,{\rm dx}}
\newcommand{\dt}{\,{\rm dt}}
\newcommand{\ds}{\,{\rm ds}}
\newcommand{\E}{\textrm{E}}
\newcommand{\Ichi}{\textrm{I}_{h}\boldsymbol{\chi}}
\newcommand{\Ihpsi}{\textrm{I}_{h} \Psi}
\newcommand{\dg}{{\rm dG}}
\newcommand{\norm}[1]{{\vert\kern-0.25ex\vert #1
		\vert\kern-0.25ex \vert}}
\newcommand{\vertiii}[1]{{\vert\kern-0.25ex\vert\kern-0.25ex\vert #1
		\vert\kern-0.25ex \vert\kern-0.25ex \vert}}
\newcommand{\vertiiih}[1]{{\vert\kern-0.25ex \vert\kern-0.25ex \vert #1
		\vert\kern-0.25ex \vert\kern-0.25ex \vert}_{h}}
\newcommand{\vertiiidg}[1]{{\vert\kern-0.25ex \vert\kern-0.25ex \vert #1
		\vert\kern-0.25ex \vert\kern-0.25ex\vert}_{{\rm dG}}}
\newcommand{\dual}[1]{\langle #1 \rangle}
\newcommand{\duall}[1]{\bigg \langle #1 \bigg \rangle}
\theoremstyle{plain}
\begin{document}
	\maketitle
	\begin{abstract}
		We study a system of semi-linear elliptic partial differential equations with a lower order cubic nonlinear term, and inhomogeneous Dirichlet boundary conditions, relevant for two-dimensional bistable liquid crystal devices, within a reduced Landau-de Gennes framework.
		The main results are (i) a priori error estimates for the energy norm, within the Nitsche's and discontinuous Galerkin frameworks under milder regularity assumptions on the exact solution
		and (ii) a reliable and efficient {\it a posteriori} analysis for a sufficiently large penalization parameter and a sufficiently fine triangulation in both cases. Numerical examples that validate the theoretical results, are presented separately.
	\end{abstract}
	\noindent {\bf Keywords:}
	non-linear elliptic pde, non-homogeneous Dirichlet boundary data, lower regularity, Nitsche's  method, dG method, {\it a priori} and {\it a posteriori} error estimates, adaptive finite element methods
	\section{Introduction}
	This paper focuses on the numerical analysis of a system of two second order semi-linear elliptic partial differential equations, with a lower order cubic non-linearity, defined on bounded two-dimensional domains with Lipschitz boundaries and inhomogeneous boundary conditions. Such systems arise naturally in different contexts for two-dimensional systems, our primary motivation being two-dimensional liquid crystal systems \cite{dg}. Liquid crystals are intermediate phases of matter between the conventional solid and liquid states of matter with versatile properties of both phases. Nematic liquid crystals are amongst the most commonly used liquid crystals, for which the constituent rod-like molecules translate freely but exhibit locally preferred directions of orientational ordering, and these locally distinguished directions are referred to as \emph{nematic directors} \cite{dg}. Consequently, nematics are directional or anisotropic materials with direction-dependent physical properties. The Landau-de Gennes (LdG)  theory is perhaps the most celebrated continuum theory for nematic liquid crystals, that describes the nematic state by the LdG order parameter -  $\textit{Q}$-tensor order parameter: a symmetric traceless $3\times 3$ matrix that contains information about the nematic directors and the degree of orientational ordering, within the matrix eigenvectors and eigenvalues, respectively \cite{Majumdar2010}. Reduced two-dimensional LdG approaches have been rigorously derived for two-dimensional domains \cite{golovaty2015, wang_canevari_majumdar_SIAM2019}, for certain model situations. In the reduced case, the order parameter is a symmetric, traceless $2\times 2$ matrix, with simply two independent components, $u$ and $v$. More precisely, if $\mathbf{n} = \left( \cos \theta, \sin \theta \right)$ is the nematic director in the plane, where $\theta \in \left[0, 2\pi \right)$ and $s$ is a scalar order parameter that measures the degree of orientational order, then $u = s \cos 2 \theta$ and v =$s \sin 2 \theta$.
	\medskip
	\noindent Define the two-dimensional vector, $\Psi:=\left(u, v \right)$ on an open, bounded domain, $\Omega \subset \mathbb{R}^2$, with a polygonal boundary, $\partial\Omega$. Then for Dirichlet boundary conditions and in the absence of external fields, the dimensionless reduced LdG free energy  \cite{MultistabilityApalachong} is
	\begin{align} \label{Landau-de Gennes energy functional}
	\mathcal{E}(\Psi_\epsilon) =\int_\Omega (\abs{\nabla \Psi_\epsilon}^2 +
	\epsilon^{-2}(\abs{\Psi_\epsilon}^2  - 1)^2) \dx,
	\end{align}
	where $\Psi_\epsilon = \mathbf{g} $ on $\partial \Omega$, and $\epsilon$ is a material-dependent parameter that depends on the elastic constant, domain size and temperature. 
	Informally speaking, the limit $\epsilon \to 0$ corresponds to macroscopic domains with size much greater than material-dependent characteristic nematic correlation lengths \cite{Majumdar2009EquilibriumOP}. The experimentally observable and physically relevant states are local or global minimizers of the reduced energy, which are weak solutions, $\Psi_\epsilon \in \mathbf{H}^1(\Omega):=H^1(\Omega) \times H^1(\Omega) $,  of the corresponding Euler-Lagrange equations: \begin{align}  \label{continuous nonlinear strong form}
	-\Delta \Psi_\epsilon =2\epsilon^{-2}(1-\abs{\Psi_\epsilon}^2)\Psi_\epsilon \text{ in } \Omega \,\, \text{ and }\,\, \Psi_\epsilon = \mathbf{g} \text{ on } \partial \Omega.
	\end{align}
	In what follows, we work with fixed but small values of $\epsilon$, which describe large domains \cite{henaomajumdarpisante2017}.
	The non-linear system \eqref{continuous nonlinear strong form} is a Poisson-type equation for a two-dimensional vector with a non-linear (cubic) lower order term. In fact, these equations are the celebrated Ginzburg-Landau partial differential equations with a rescaled $\epsilon$, which have been extensively studied in \cite{Bethuel, pacard2000linear}. 
	In this paper, we apply Nitsche's finite-element approximation method to this system \eqref{continuous nonlinear strong form} and
	our main contribution is to relax regularity assumptions on $\Psi_\epsilon$, as will be explained below. 
	The Nitsche's method is well-studied in the literature; see \cite{Gudi_2010_A_new_error_analysis}, \cite{Stenberg2008}, \cite{Strenburg2018},  \cite{Nitsche1971} for applications of Nitsche's method to Poisson's equation with different boundary conditions, {\it a priori} and {\it a posteriori} error analysis for such problems and applications of medius analysis. Further results for the {\it a posteriori} analysis of the Poisson problem are also given in \cite{Braess_Verfurth_a_posteriori} with a {\it saturation assumption}, that can be stated as the approximate solution of the problem in a finer mesh constitutes a better approximation to the exact solution than the approximate solution on a coarser mesh, in the energy norm.
	In \cite{BCD2004} and \cite{KimKwang_nonlinear}, the authors derive
	\emph{ a posteriori} error bounds for the finite-element analysis of the Poisson's equation with $C^0$- Dirichlet boundary conditions, without the saturation assumption. Further, {\it  a priori} and {\it a posteriori} error analysis of dGFEM (Discontinuous Galerkin Finite Element Methods) for the von
	{K}\'{a}rm\'{a}n equations are studied in \cite{Gaurang_apriori_aposteriori}. The references are not exhaustive and the techniques in these papers are adapted to deal with the novel aspects of our problem, as will be outlined below.
	\medskip
	
	The specific model \eqref{continuous nonlinear strong form} has been applied with success to the planar bistable nematic device reported in \cite{Tsakonas}, where the authors study nematic liquid crystals-filled shallow square wells, with experimentally imposed tangent boundary conditions. They report the existence of six distinct stable solutions: two diagonal and four rotated solutions. The nematic director is aligned along the square diagonals in the diagonal solutions, and rotates by $\pi$ radians for the rotated solutions. In \cite{MultistabilityApalachong}, the authors study the numerical convergence of the diagonal and rotated solutions, in a conforming finite-element set-up, as a function of $\epsilon$. In \cite{DGFEM}, the authors carry out a rigorous {\it a priori} error analysis for the dGFEM approximation of  $\h^2$-regular solutions of \eqref{continuous nonlinear strong form} in {\it convex polygonal} domains. Optimal linear (resp. quadratic) order of convergence  in energy (resp. $\mathbf{L}^2$) norm  for solutions, $\Psi_\epsilon \in \h^2(\Omega)$, accompanied by an analysis of the $h-\epsilon$ dependency, where $h$ is the mesh size or the discretization parameter, along with some numerical experiments are discussed. However, there are no {\it a posteriori} error estimates in \cite{DGFEM}.
	\noindent This paper builds on the results in \cite{DGFEM} with several non-trivial generalisations and extensions. Define the admissible space
	$\boldsymbol{\mathcal{{X}}}=\left \{ \mathbf{w} \in \mathbf{H}^1(\Omega) :
	\mathbf{w}= \mathbf{g} \,\, \text{on} \,\, \partial \Omega  \right \}$ and we restrict attention to solutions $\Psi_{\epsilon} \! \in \! \boldsymbol{\mathcal{{X}}}\cap \mathbf{H}^{1+\alpha}(\Omega)$ for $0<\alpha \leq 1$, where $\alpha$ is the index of elliptic regularity in this manuscript. The main contributions of this article are summarised below:
	\begin{itemize}
		\item \noindent an {\it a priori} finite element error analysis for  \eqref{continuous nonlinear strong form},  using the Nitsche's method to incorporate the non-homogeneous boundary conditions,  along with a proof of $h^\alpha$ (resp. $h^{2\alpha}$)-convergence of the energy (resp. $\mathbf{L}^2$) norm where $h$ is the discretization parameter;
		\item   a reliable and efficient residual type {\it a posteriori} error estimate for \eqref{continuous nonlinear strong form} with the assumption that the boundary function, $\mathbf{g} \in \h^{\frac{1}{2}}(\partial\Omega)$ belongs to $\mathbf{C}^0(\overline{\partial\Omega})$;
	\item  {\it a priori} and {\it a posteriori} error estimates for dGFEM under the mild regularity assumption, $\Psi_{\epsilon} \in \boldsymbol{\mathcal{{X}}}\cap \mathbf{H}^{1+\alpha}(\Omega)$ for $0<\alpha \leq 1$;
		\item numerical experiments for uniform as well as adaptive refinement that  validate the theoretical estimates above. 
	\end{itemize}
	There are new technical challenges in this manuscript, compared to  \cite{DGFEM}. The analysis in \cite{DGFEM} is restricted to solutions in $\mathbf{H}^2(\Omega)$, or equivalently $\alpha=1$. The first challenge with a less regular solution, $\Psi_{\epsilon}$ as above, is to handle the normal derivatives $\nabla \Psi_{\epsilon} \nu \notin \mathbf{L}^2(E)$ across the element boundaries. In this paper, the medius analysis \cite{Gudi_2010_A_new_error_analysis} that  combines ideas of both {\it a posteriori}  and {\it a priori} analysis is employed to overcome this. New local efficiency results are proved to establish the stability of a perturbed bilinear form. For the {\it a posteriori} analysis, a lifting operator $\Psi_\mathbf{g}$ is used such that $\Psi_\mathbf{g} = \mathbf{g}$ on $\partial \Omega$, and this technique requires the additional continuity constraints on $\mathbf{g}$. These continuity constraints might be relaxed by using saturation techniques, and will be pursued in future work. The generalisations to dGFEM with $\alpha <1$, involve additional jump and average terms, due to the lack of inter-element continuity in the dGFEM discrete space. For $\alpha=1$, there are $\epsilon$-independent estimates for the $H^2$-norm of solutions, $\Psi_\epsilon$ (see\cite{Brezis_Bethual_Book}) which allows for a $h-\epsilon$ dependency study in \cite{DGFEM}; this is not easily possible for $\alpha<1$ and hence, the value of $\epsilon$ is fixed in this manuscript, unlike the study in \cite{DGFEM}.  
	\medskip
	
	\noindent The reduced regularity assumption for the exact solution is relevant for non-convex polygons, domains with re-entrant corners or slit edges,  with $\alpha<1$ \cite{grisvard1992}. Further, {\it a posteriori} error estimators provide a systematic way of controlling errors for adaptive mesh refinements \cite{Oden,Verfurth}, as is illustrated by means of several numerical experiments in Section~\ref{Numerical results for a priori error analysis}. 
	The numerical estimates confirm the \emph{a priori} and
	\emph{a posteriori} estimates and establishes the advantages of adaptive mesh refinements in terms of computational cost and rates of convergence, captured by informative convergence plots for the estimators in Section \ref{Numerical results for a priori error analysis}.
	\medskip
	
	The standard notation for Sobolev spaces $H^s(\Omega)$ $(\text{resp.} \,W^{s,p}(\Omega))$ with $s,p$ positive real numbers, equipped with the usual norm $\norm{\cdot}_s$ $(\text{resp. } \norm{\cdot}_{s,p} )$ is used throughout the paper and the space $\mathbf{H}^s(\Omega)$ (resp.\,$\mathbf{L}^p(\Omega)$) is defined to be the product space $H^s(\Omega) \times H^s(\Omega)$ $(\text{resp. } L^p(\Omega) \times L^p(\Omega))$ equipped with the corresponding norm $\vertiii{\cdot}_s$ ($\text{resp. }\vertiii{\cdot}_{s,p}$) defined by $\vertiii{\Phi}_s\!=(\norm{\varphi_1}_s^2+\norm{\varphi_2}_s^2)^{\frac{1}{2}}$ for all $ \Phi\!=\!(\varphi_1, \varphi_2) \in \!\mathbf{H}^s(\Omega) $ $ ( \text{resp.  }\vertiii{\Phi}_{s,p}\!\!=(\norm{\varphi_1}_{s,p}^2+\norm{\varphi_2}_{s,p}^2)^{\frac{1}{2}}$ for all $\Phi\!=\!(\varphi_1, \varphi_2) \!\in \!\mathbf{W}^{s,p}(\Omega)).$
  	 The norm on $\mathbf{L}^2(\Omega)$ space is simply $\vertiii{\Phi}_0\!=(\norm{\varphi_1}_0^2+\norm{\varphi_2}_0^2)^{\frac{1}{2}}$ for all $ \Phi\!=\!(\varphi_1, \varphi_2) \in \!\mathbf{L}^2(\Omega). $ 
	 Set $V:=H_0^1(\Omega)= \big\{\varphi \in L^2(\Omega): \frac{\partial \varphi}{\partial x}, \frac{\partial \varphi}{\partial y}
	\in L^2(\Omega) ,$ $ \varphi|_{\partial {\Omega}}=0 \big\} $  and $ \V\!= \,\!\mathbf{H}_0^1(\Omega)=H_0^1(\Omega) \times H_0^1(\Omega)$.
 Throughout the manuscript, $C_s$ denotes a generic constant.
	\medskip
	
	The paper is organized as follows. In the next section, the weak formulation and the Nitsche's method are introduced. Section \ref{Main results} is devoted to the main results for both {\it a priori} and {\it a posteriori} error analysis for Nitsche's method. Section \ref{A priori error estimate} contains some auxiliary results followed by the rigorous {\it a priori} error estimates for Nitsche's method. 
	In Section \ref{A posteriori error estimate}, a reliable and efficient  {\it a posteriori}  error analysis for Nitsche's method is presented. 
Section~\ref{Extension to dG method}  focuses on the the generalisations to dGFEM and is followed by numerical experiments that confirm the theoretical findings in Section \ref{Numerical results for a priori error analysis}. Section~\ref{sec:conclusions} concludes with some brief perspectives. The proofs of the local efficiency results are given in the Appendix.

	\section{Preliminaries and main results} \label{Preliminaries and main results}
	The weak formulation of  the non-linear system \eqref{continuous nonlinear strong form},  the Nitsche's method and the main results in the Nitsche framework are given in this section.
	\subsection{Weak formulation}\label{weak formulation}
	The weak formulation of \eqref{continuous nonlinear strong form} seeks $\Psi_\epsilon \in \boldsymbol{\mathcal{{X}}}$ such that for all  $\Phi \in \V$,
	\begin{align} \label{continuous nonlinear}
	N(\Psi_\epsilon;\Phi):=A(\Psi_\epsilon,\Phi)+B(\Psi_\epsilon,\Psi_\epsilon,\Psi_\epsilon,\Phi)+C(\Psi_\epsilon,\Phi)=0. \,\,\,\, 
	\end{align}
	Here for all $ \Xi=(\xi_1,\xi_2), \boldsymbol \eta= (\eta_1, \eta_2), \Theta=(\theta_1,\theta_2), \Phi=(\varphi_1,\varphi_2)  \in  \X:=\mathbf{H}^1(\Omega) ,$ 
	\begin{align*}
	&A(\Theta,\Phi):=a(\theta_1,\varphi_1)+ a(\theta_2,\varphi_2),\,\,
	C(\Theta,\varphi):=c(\theta_1,\varphi_1)+ c(\theta_2,\varphi_2),
	\\&
	B(\Xi,\boldsymbol \eta,\Theta,\Phi):=\frac{2}{3\epsilon^{2}}\int_\Omega \left((\Xi \cdot \boldsymbol \eta)(\Theta \cdot \Phi)+2(\Xi \cdot \Theta)(\boldsymbol \eta \cdot \Phi)\right) \dx =\frac{1}{3}(3b(\xi_1,\eta_1,\theta_1,\varphi_1)+3b(\xi_2,\eta_2,\theta_2,\varphi_2) \notag \\& 
	\qquad \qquad \qquad \qquad + 2b(\xi_2,\eta_1,\theta_2,\varphi_1)+ 2 b(\xi_1,\eta_2,\theta_1,\varphi_2)+ b(\xi_2,\eta_2,\theta_1,\varphi_1)+b(\xi_1,\eta_1,\theta_2,\varphi_2)),
	\\&
	\text{and for } \xi,\eta,\theta,\varphi \in H^1(\Omega), 
	\:  a(\theta,\varphi):=\int_{\Omega}  \nabla \theta \cdot \nabla \varphi \dx,  \: b(\xi,\eta,\theta,\varphi):= 2\epsilon^{-2}\int_\Omega \xi\eta\theta \varphi \dx \nonumber \\ 
	& \text{ and }  c(\theta,\varphi):=-2\epsilon^{-2} \int_\Omega \theta \varphi \dx. \notag
	\end{align*}
See \cite{MultistabilityApalachong, DGFEM}	for a proof of existence of minimizers of the Landau-de Gennes energy functional \eqref{Landau-de Gennes energy functional}  that are  solutions to \eqref{continuous nonlinear}.
The analysis of this article is applicable to the cases where the exact solution $\Psi_\epsilon$ of \eqref{continuous nonlinear} belongs to $\boldsymbol{\mathcal{{X}}} \cap \h^{1+\alpha}(\Omega),  0 < \alpha <1,$   for example in non-convex polygons.
	When $\Omega$ is a convex polygon, $\alpha=1$; that is, the solution of \eqref{continuous nonlinear} belongs to $\boldsymbol{\mathcal{{X}}} \cap \h^{2}(\Omega)$. 
	The regular solutions (also referred to as non-singular solutions in literature; see  \cite{Gaurang_apriori_aposteriori}  and the references therein) $\Psi_\epsilon$ of \eqref{continuous nonlinear strong form} for \textbf{a fixed $\epsilon$} are approximated.
	This implies that the linearized operator $\dual{DN(\Psi_\epsilon)\cdot, \cdot}$ is invertible in the Banach space and the following inf-sup conditions \cite{Ern} hold: 
	\begin{align}\label{2.9}
	0< \beta := \inf_{\substack{\Theta \in \V \\  \vertiii{\Theta}_1=1}} \sup_{\substack{\Phi \in \V \\  \vertiii{\Phi}_1=1}}\dual{DN(\Psi_\epsilon)\Theta, \Phi}, \text{ and } 0< \beta = \inf_{\substack{\Phi \in \V \\  \vertiii{\Phi}_1=1}} \sup_{\substack{\Theta \in \V \\  \vertiii{\Theta}_1=1}}\dual{DN(\Psi_\epsilon)\Theta, \Phi},
	\end{align}
	where $\dual{DN(\Psi_\epsilon)\Theta, \Phi}:=A(\Theta,\Phi)+3B(\Psi_\epsilon,\Psi_\epsilon,\Theta,\Phi)+C(\Theta,\Phi)$ and  the inf-sup constant $\beta$ depends on $\epsilon$. Here and throughout the paper,  $\dual{\cdot, \cdot}$  denotes the duality pairing between $\V^*$ and $\V$.
	The parameter $\epsilon$ in $\Psi_\epsilon$ is suppressed  for notational brevity and is chosen fixed in the sequel. 
	\subsection{Nitsche's method}
	 Consider a shape-regular triangulation  $\mathcal{T}$ of $\Omega$ into triangles \cite{ciarlet}. Define the mesh discretization parameter $h= \max_{T \in \mathcal{T}} h_T,$ where $h_T= diam(T) $. Denote $\mathcal{E}_h^{i} $( resp. $  \mathcal{E}_h^{\partial}$) to be the interior (resp. boundary) edges of $\mathcal{T}$ and let $\mathcal{E}:=\mathcal{E}_h^{i} \cup  \mathcal{E}_h^{\partial}$. The length of an edge $E$ is denoted by $h_E.$ Define the finite element subspace of $\X$ by $\X_h:=X_h \times X_h$ with $X_h:= \{v \in H^1(\Omega)|\,\, v|_T \in P_1(T) \text{ for all } T \in \mathcal{T} \}$ and let the discrete norm be defined by $\displaystyle 	\norm{v}^2_h:=\int_{\Omega} \abs{\nabla v}^2 \dx + \sum_{ E \in \mathcal{E}_h^{\partial}} \frac{\sigma}{h_E} \int_{E}  v^2 \ds \text{ for all } v \in X_h.$ 
	Here $\sigma>0$ is the penalty parameter and $P_1(T)$ denotes affine polynomials defined on $T$. The space $\X_h $ is equipped with the product norm $\vertiiih{\Phi_{h}}:= \left( \norm{\varphi_{1}}_h^2 + \norm{\varphi_{2}}_h^2\right)^{1/2} $ for all $\Phi_{h}=(\varphi_{1},\varphi_{2}) \in \X_h$.  Define $\vertiii{\Phi_{h}}_{0, E}^2:= \norm{\varphi_{1}}_{0, E}^2 + \norm{\varphi_{2}}_{0, E}^2 $ and $\vertiii{\Phi_{h}}_{0, T}^2:= \norm{\varphi_{1}}_{0, T}^2 + \norm{\varphi_{2}}_{0, T}^2 $ for  $\Phi_{h} \in \X_h$  such that for $v \in X_h,$ $\norm{v}_{0, E}^2:= \int_{ E} v^2 \ds$ and $\norm{v}_{0, T}^2:= \int_{ T} v^2 \dx$, respectively. 
	 Define $H^1(\mathcal{T}):= \{v \in L^2(\Omega)| \, v \in  H^1(T) \text{ for all } T \in \mathcal{T}\}$ and $\mathbf{H}^1(\mathcal{T}):=H^1(\mathcal{T}) \times H^1(\mathcal{T}). $ For an interior edge $E$ shared by the triangles $T^+$ and $T^-$, define the jump  and average of  $\varphi \in H^1(\mathcal{T})$ across $E$ as $[\varphi]_E:= \varphi|_{T^+} - \varphi|_{T^-}$ and  $\{\varphi\}_E:=\frac{1}{2}(\varphi|_{T^+} + \varphi|_{T^-} )$, respectively, and for an   boundary edge $E $ of the triangle $T$, $[\varphi]_E:= \varphi|_{T} $ and  $\{\varphi\}_E:=\varphi|_{T}$, respectively. For a vector function, jump and average are defined component-wise. 
	For  $\theta, \varphi \in H^1(\Omega)$, $\mathbf{g}=(g_1, g_2)$ and  the penalty parameter $\sigma>0$, let
	 \begin{align*}
	&a_h(\theta,\varphi):= \int_\Omega \nabla \theta \cdot \nabla \varphi \dx- \dual{\frac{\partial \theta}{\partial \nu},  \varphi  }_{\partial \Omega} -  \dual{ \theta,  \frac{\partial \varphi}{\partial \nu}  }_{\partial \Omega} + \sum_{E \in  \mathcal{E}_h^{\partial}}  \frac{\sigma}{h_E} \dual{\theta, \varphi }_E,\notag \\	
&	\text{  and }\,\,
	l^i_h(\varphi):=
	- \dual{  g_i,  \frac{\partial \varphi}{\partial \nu}  }_{\partial \Omega} + \sum_{E \in  \mathcal{E}_h^{\partial}} \frac{\sigma}{h_E} \dual{g_i, \varphi }_E \text{ for } 1\leq i \leq 2,
	\end{align*}	
	\noindent where $\dual{\cdot, \cdot}_{\partial \Omega}$ denotes the duality pairing between ${H}^{-\frac{1}{2}}(\partial \Omega)$ and ${H}^{\frac{1}{2}}(\partial \Omega)$ and $\nu$ denotes  the outward unit normal  associated to $\partial \Omega$. In the sequel, $\dual{\cdot, \cdot}_E$  is the duality pairing between 
	 ${H}^{-\frac{1}{2}}(E)$ and ${H}^{\frac{1}{2}}(E)$ for $E \in \mathcal{E}.$ For $ \Theta=(\theta_1,\theta_2),\, \Phi=(\varphi_1,\varphi_2)
	\in \X $, let $A_{h}(\Theta,\Phi) :=a_{h}(\theta_1,\varphi_1)+a_{h}(\theta_2,\varphi_2),$
		and 
	  $L_h(\Phi_{h})=l^1_h(\varphi_1)+l^2_h(\varphi_2)$.	
	 
	 \noindent The  Nitsche's method corresponding to \eqref{continuous nonlinear strong form} seeks $ \Psi_{h}\!\in\!\X_h$, such that for all $ \Phi_{h} \in  \X_h,$
	\begin{align}\label{discrete nonlinear problem}
	N_h(\Psi_h;\Phi_h):=A_{h}(\Psi_{h},\Phi_{h})+B(\Psi_{h},\Psi_{h},\Psi_{h},\Phi_{h})+C(\Psi_{h},\Phi_{h})-L_h(\Phi_{h})=0. 
	\end{align}
	\begin{rem}
		The restrictions of the bilinear and quadrilinear forms $C(\cdot, \cdot),$ $B(\cdot, \cdot,\cdot, \cdot)$ to $T \in \mathcal{T}$ are denoted as $C_T(\cdot, \cdot), $ $B_T(\cdot, \cdot,\cdot, \cdot)$, respectively. Define the bilinear form $A_T(\Theta, \Phi):= \int_T \nabla \Theta \cdot \nabla \Phi \dx$ for all $\Theta, \Phi \in \X$.
 For $\Phi_h=(\varphi_1,\varphi_2) \in \X_{h}$, let  $\nabla\Phi_h\nu_E: =(\frac{\partial \varphi_1}{\partial \nu_E}, \frac{\partial \varphi_2}{\partial \nu_E})$ on an edge $E$ with  outward unit normal $\nu_E$ to $E$.
		\end{rem}
	\subsection{Main results} \label{Main results}
 The main results in this manuscript for Nitsche's method are stated in this sub-section. Theorems \ref{energy and $L^2$ norm error estimate} and \ref{reliability and efficiency} establish  {\it a priori} error estimates 
 in energy and $\mathbf{L}^2(\Omega)$ norms,  and {\it a posteriori} error estimates for {\it Nitsche's method}, respectively, when the exact solution $\Psi$ of \eqref{continuous nonlinear} has the regularity $\boldsymbol{\mathcal{{X}}} \cap \h^{1+\alpha}(\Omega) , \; 0< \alpha \le 1$. Throughout the sequel, $0 < \alpha \le 1$ denotes the index of elliptic regularity. The results are extended for dGFEM  and are presented in Section \ref{Extension to dG method}.
	\begin{thm}({\it A priori} error estimate)\label{energy and $L^2$ norm error estimate}
		Let $\Psi$ be a regular solution of \eqref{continuous nonlinear}. 
		For a sufficiently large  penalty parameter $\sigma>0$, and a sufficiently small  discretization parameter $h$, there exists a unique solution $\Psi_{h}$ to the discrete problem \eqref{discrete nonlinear problem}  that approximates $\Psi$ such that	
		 	\begin{align*}
			(i)\,\,\,	\vertiii{\Psi-\Psi_h}_h \lesssim h^{\alpha},\quad
			(ii)\,\,\,	\vertiii{\Psi-\Psi_h}_0 \lesssim h^{2\alpha},
			\end{align*}
		where $0 < \alpha \le 1$ denotes the index of elliptic regularity. 	As per standard convention  $a \lesssim b$ $\iff$ $a \leq Cb$ where the constant $C$ is independent of the discretization parameter $h$.
	\end{thm}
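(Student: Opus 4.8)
The plan is to follow the standard paradigm for a priori error analysis of conforming-type discretizations of semilinear problems with a nonsingular solution: combine consistency of the Nitsche scheme, a discrete inf-sup (stability) estimate for the linearized discrete operator, and a fixed-point/Brouwer argument to produce the discrete solution near $\Psi$, then bootstrap to the $\mathbf{L}^2$ estimate by a duality (Aubin--Nitsche) argument. Since the paper explicitly invokes medius analysis to handle the low-regularity issue $\nabla\Psi\nu \notin \mathbf{L}^2(E)$, I expect the scheme to be analyzed through a perturbed bilinear form whose stability is established via local efficiency estimates (the ``New local efficiency results'' mentioned in the introduction).

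First I would record the consistency/Galerkin-orthogonality-type relation: since $\Psi$ solves \eqref{continuous nonlinear} and is smooth enough that $-\Delta\Psi$ makes sense weakly, $N_h(\Psi;\Phi_h)$ equals a residual term that can be bounded by $h^\alpha\vertiiih{\Phi_h}$ using the fractional trace/interpolation estimates for $\mathbf{H}^{1+\alpha}$ functions and the Nitsche boundary terms (here the penalty $\sigma$ enters, and one needs $\sigma$ large enough so that $a_h$ is coercive on $\X_h$ in the $\norm{\cdot}_h$-norm, a standard inverse-inequality argument). Next, I would establish the discrete inf-sup condition for $\dual{DN_h(\Psi)\cdot,\cdot}$ on $\X_h\times\X_h$ with a constant bounded below independently of $h$ (for $h$ small): this follows by transferring the continuous inf-sup \eqref{2.9} to the discrete level, controlling the perturbation $B(\Psi,\Psi,\cdot,\cdot)-B(\Psi_h,\cdots)$ and the consistency error of $A_h$ versus $A$, again using the medius/local-efficiency machinery to absorb the non-$\mathbf{L}^2$ normal-derivative terms. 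With consistency and discrete stability in hand, define the map $\Phi_h \mapsto \Phi_h - (DN_h(\Psi))^{-1}N_h(\Phi_h;\cdot)$ on a ball of radius $O(h^\alpha)$ around $I_h\Psi$ in $\X_h$; show it maps the ball into itself and is a contraction (the nonlinearity $B$ being locally Lipschitz on bounded sets in $\mathbf{H}^1$, using the Sobolev embedding $H^1 \hookrightarrow L^4$ in 2D to control the cubic term), yielding existence of a unique $\Psi_h$ in that ball, hence part (i): $\vertiiih{\Psi-\Psi_h}\lesssim \vertiiih{\Psi-I_h\Psi}+O(h^\alpha)\lesssim h^\alpha$.

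For part (ii), I would run the Aubin--Nitsche duality argument: let $\zeta\in\V$ solve the linearized adjoint problem $\dual{DN(\Psi)\Phi,\zeta}=(\Psi-\Psi_h,\Phi)_0$ for all $\Phi\in\V$; elliptic regularity gives $\zeta\in\mathbf{H}^{1+\alpha}$ with $\vertiii{\zeta}_{1+\alpha}\lesssim \vertiii{\Psi-\Psi_h}_0$. Then test with $\Phi=\Psi-\Psi_h$, insert a discrete approximation $\zeta_h$ of $\zeta$, use the discrete equations, consistency, the already-proved $h^\alpha$ energy bound, and the $\mathbf{H}^{1+\alpha}$-approximation estimate $\vertiiih{\zeta-\zeta_h}\lesssim h^\alpha\vertiii{\zeta}_{1+\alpha}$; the cubic terms contribute quadratically in $\vertiiih{\Psi-\Psi_h}$ and are therefore higher order. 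Collecting terms gives $\vertiii{\Psi-\Psi_h}_0^2 \lesssim h^{2\alpha}\vertiii{\Psi-\Psi_h}_0$, i.e. $\vertiii{\Psi-\Psi_h}_0\lesssim h^{2\alpha}$.

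The main obstacle, as flagged in the introduction, is the low regularity: $\nabla\Psi\nu$ need not lie in $\mathbf{L}^2$ of the edges, so the boundary/consistency terms in $a_h$ cannot be estimated naively, and the usual Strang-lemma bound on $N_h(\Psi;\Phi_h)$ breaks down. Overcoming this requires the medius-analysis trick of rewriting the troublesome terms using local efficiency (bounding edge/element residuals of the \emph{discrete} solution by local $\mathbf{H}^1$-errors plus data oscillation) to recover the stability of the perturbed bilinear form; proving those local efficiency estimates (deferred to the Appendix) and threading them carefully through the discrete inf-sup argument is the technically delicate heart of the proof. A secondary subtlety is that all hidden constants, except the inf-sup constant $\beta$ which legitimately depends on $\epsilon$, must be shown independent of $h$, and the ``sufficiently large $\sigma$'' and ``sufficiently small $h$'' thresholds must be made explicit enough to close the contraction.
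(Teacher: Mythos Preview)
Your overall strategy is correct and matches the paper: discrete inf-sup for a perturbed linearization, a Brouwer fixed-point argument centered at $\mathrm{I}_h\Psi$, and Aubin--Nitsche duality for the $\mathbf{L}^2$ estimate, with medius analysis (local efficiency) replacing the naive consistency bound. Two points deserve correction.

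First, a minor but structurally important difference: the paper does not attempt to bound $N_h(\Psi;\Phi_h)$ directly, nor does it establish inf-sup for $DN_h(\Psi)$. Instead it works throughout with the \emph{interpolant}: the perturbed bilinear form is $\dual{DN_h(\mathrm{I}_h\Psi)\cdot,\cdot}$, and the key intermediate estimate (Lemma~\ref{Lemma for ball to ball map}) bounds the residual $N_h(\mathrm{I}_h\Psi;\Phi_h)$. This matters because $\mathrm{I}_h\Psi$ is piecewise linear, so its edge normal derivatives lie in $L^2$ and the local efficiency lemmas apply cleanly; the troublesome term $\nabla\Psi\nu$ never has to be estimated on edges. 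Your description of medius analysis as ``bounding edge/element residuals of the discrete solution'' is slightly off---at this stage there is no discrete solution yet, and the residuals being bounded are those of $\mathrm{I}_h\Psi$ (and of the interpolants $\mathrm{I}_h\boldsymbol{\xi}$, $\mathrm{I}_h\boldsymbol{\chi}$ of auxiliary linear problems).

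Second, there is a genuine gap in your $\mathbf{L}^2$ argument. You propose to test the dual problem with $\Phi = \Psi - \Psi_h$, but $\Psi - \Psi_h \notin \V = \mathbf{H}^1_0(\Omega)$: Nitsche's method enforces the boundary condition only weakly, so $\Psi_h|_{\partial\Omega} \neq \mathbf{g}$ in general. The paper circumvents this by setting $\boldsymbol{\varphi}_h := \mathrm{I}_h\Psi - \Psi_h \in \X_h$, taking $G = \boldsymbol{\varphi}_h$ in the dual problem, and testing with $\Phi = \Pi_h\boldsymbol{\varphi}_h \in \V_h \subset \V$ (where $\Pi_h$ is the extension operator of Lemma~\ref{enrichment Nitsche}). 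The mismatch $(\boldsymbol{\varphi}_h, \boldsymbol{\varphi}_h - \Pi_h\boldsymbol{\varphi}_h)$ is then controlled by the extension-operator estimate~\eqref{enrichment2}, and the remaining terms are handled via local efficiency (Lemma~\ref{local efficiency for L2 estimate}) applied to $\mathrm{I}_h\boldsymbol{\chi}$. Without this detour through $\Pi_h$, your duality argument does not close.
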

\noindent 	A reliable and efficient {\it a posteriori} error estimate for \eqref{discrete nonlinear problem} is the second main result of the paper. For each element $T \!\in\! \mathcal{T}$ and edge $E \!\in\! \mathcal{E}$, define the volume and edge contributions to the estimators  by
\begin{align}
&\vartheta_T^2: = h_T^2 \vertiii{ 2\epsilon^{-2}  (\abs{\Psi_h}^2-1)\Psi_h}^2_{0,T}, \,\,\,(\vartheta_E^i)^2 := h_E \vertiii{[\nabla \Psi_h \nu_E]_E}_{0,E}^2 \text{ for all } E \in \mathcal{E}_h^{i}, \label{estimator_volume_interior_edge} \\& \text{and } (\vartheta_E^{\partial })^2:= \frac{1}{h_E} \vertiii{\Psi_{h} - \mathbf{g}}_{0,E}^2 \text{ for all } E \in \mathcal{E}_h^{\partial}. \label{estimator_boundary_edge}
\end{align}
Define the estimator
$ \displaystyle
\vartheta^2:= \sum_{ T \in \mathcal{T}}  \vartheta_T^2 +\sum_{ E \in \mathcal{E}_h^{i}} (\vartheta_E^i)^2 + \sum_{ E \in \mathcal{E}_h^{\partial} }(\vartheta_E^{\partial})^2.
$
\begin{thm}{(A posteriori error estimate)}\label{reliability and efficiency}
	Let $\Psi$ be a regular solution of \eqref{continuous nonlinear} and $\Psi_{h}$ solve \eqref{discrete nonlinear problem}. For a sufficiently large penalty parameter $\sigma>0$, and a sufficiently small  discretization parameter $h$,  there exist $h$-independent positive constants $C_{\text{rel}}$ and $C_{\text{eff}}$ such that
	\begin{align*}
	C_{\text{eff}}	\vartheta \leq	\vertiii{\Psi - \Psi_{h}}_{h} \leq  C_{\text{rel}} \big( \vartheta+  h.o.t \big),
	\end{align*}  
	where $h.o.t$ expresses one or several terms of higher order (as will be explained in Section \ref{A posteriori error estimate}).	
\end{thm}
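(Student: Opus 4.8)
The plan is to establish the two inequalities of Theorem~\ref{reliability and efficiency} separately, using a combination of the medius-analysis tools and the discrete inf-sup stability already available from the \emph{a priori} theory (Theorem~\ref{energy and $L^2$ norm error estimate} and the inf-sup condition \eqref{2.9}). For the reliability bound $\vertiii{\Psi-\Psi_h}_h \lesssim \vartheta + h.o.t.$, I would start by introducing a lifting $\Psi_{\mathbf g}$ of the boundary data (as announced in the introduction, exploiting the extra continuity $\mathbf g \in \mathbf C^0(\overline{\partial\Omega})$), and write $\Psi - \Psi_h = (\Psi - \Psi_{\mathbf g} - e_0)$ with $e_0 \in \V$ capturing the interior error plus the boundary-mismatch term $\Psi_h - \mathbf g$, whose size is exactly controlled by $\sum_E (\vartheta_E^\partial)^2$. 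Then I would test the weak formulation \eqref{continuous nonlinear} against a suitable $\Phi \in \V$, subtract the discrete equation \eqref{discrete nonlinear problem}, and use the inf-sup stability of $\dual{DN(\Psi)\cdot,\cdot}$ together with a Lipschitz bound on the nonlinear remainder $N(\Psi;\cdot) - N(\Psi_h;\cdot) - \dual{DN(\Psi)(\Psi-\Psi_h),\cdot}$ (which is quadratic in $\vertiii{\Psi-\Psi_h}_h$ and can be absorbed for $h$ small). The residual is then split elementwise and edgewise: integration by parts on each $T$ produces the volume residual $2\epsilon^{-2}(|\Psi_h|^2-1)\Psi_h$ (note $\Delta\Psi_h = 0$ on each triangle since $\Psi_h$ is affine) and the jump residual $[\nabla\Psi_h\nu_E]_E$, while the Nitsche boundary terms produce the boundary residual $\Psi_h - \mathbf g$; applying a quasi-interpolation (Scott–Zhang/Clément) estimate and the trace/inverse inequalities, each piece is dominated by the corresponding $\vartheta_T$, $\vartheta_E^i$, $\vartheta_E^\partial$, with the lifting approximation error and data-oscillation contributing the $h.o.t.$

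For the efficiency bound $C_{\mathrm{eff}}\,\vartheta \le \vertiii{\Psi-\Psi_h}_h$, I would argue locally using bubble-function techniques in the spirit of Verfürth. For the volume term, multiply $2\epsilon^{-2}(|\Psi_h|^2-1)\Psi_h$ by an interior element bubble $b_T$, use the strong form $-\Delta\Psi = 2\epsilon^{-2}(1-|\Psi|^2)\Psi$ to rewrite the integral in terms of $\nabla(\Psi-\Psi_h)$ and the nonlinear difference $|\Psi|^2\Psi - |\Psi_h|^2\Psi_h$, then invoke the norm-equivalence on the finite-dimensional space and inverse estimates to get $\vartheta_T \lesssim \vertiii{\Psi-\Psi_h}_{h,\omega_T} + \text{(higher-order data term)}$. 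For the interior edge jump, use an edge bubble $b_E$ extended to the patch $\omega_E = T^+\cup T^-$, test against $[\nabla\Psi_h\nu_E]_E$, integrate by parts and again use the PDE; this yields $\vartheta_E^i \lesssim \vertiii{\Psi-\Psi_h}_{h,\omega_E} + \text{h.o.t.}$ The boundary term $\vartheta_E^\partial = h_E^{-1/2}\vertiii{\Psi_h-\mathbf g}_{0,E}$ is immediate since $\Psi = \mathbf g$ on $\partial\Omega$ forces $\Psi_h - \mathbf g = (\Psi_h - \Psi)|_E$, and this is exactly one of the terms in the discrete norm $\vertiii{\cdot}_h$ by its definition with the $\sigma/h_E$-weighted boundary contribution. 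Summing over all elements and edges, with finite overlap of the patches, gives the global efficiency estimate; the new local efficiency results for the perturbed bilinear form referenced in the introduction and proved in the Appendix are precisely what make the nonlinear volume term manageable here.

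The main obstacle I anticipate is the low regularity $\Psi \in \h^{1+\alpha}(\Omega)$ with $\alpha < 1$: the normal trace $\nabla\Psi\cdot\nu$ is \emph{not} in $\mathbf L^2(E)$ across edges, so one cannot naively write the consistency/Galerkin-orthogonality identity with pointwise edge terms. This is where the medius analysis enters: rather than comparing $\Psi_h$ directly to $\Psi$ through element-wise integration by parts on $\Psi$, I would route the argument through a companion conforming or smoother object (e.g. an enriching/averaging operator $\Psi_h \mapsto E_h\Psi_h \in \V + \Psi_{\mathbf g}$, or a Helmholtz-type decomposition of the residual) so that all the dangerous edge terms are expressed via jumps of the \emph{discrete} function $\Psi_h$ (which \emph{are} in $\mathbf L^2(E)$) plus a consistency error that is controlled by $\vertiii{\Psi - E_h\Psi_h}$, itself bounded by the estimator via standard enrichment estimates. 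Handling the interplay of this enrichment error with the nonlinear term, and verifying that the "sufficiently large $\sigma$" and "sufficiently fine $h$" thresholds are compatible with those in Theorem~\ref{energy and $L^2$ norm error estimate} (so the discrete solution and its inf-sup stability are guaranteed), will require care but no fundamentally new idea beyond what the \emph{a priori} analysis already supplies.
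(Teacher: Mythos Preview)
Your proposal is correct and follows essentially the paper's route: reliability is obtained via an abstract estimate (the paper's Theorem~\ref{thm 4.1}) that packages exactly your inf-sup plus quadratic-remainder argument, combined with the specific lifting $\Psi_{\mathbf g}$ of Lemma~\ref{psi-psi_g estimate} and the Scott--Zhang interpolant, while efficiency is precisely Lemma~\ref{lem 3.1}(i) with $\Phi_h=\Psi_h$ together with the direct boundary bound $\sum_E h_E^{-1}\vertiii{\Psi_h-\mathbf g}_{0,E}^2\le \vertiii{\Psi-\Psi_h}_h^2$. One clarification: the low-regularity obstacle you anticipate in your last paragraph does not actually arise for the \emph{a~posteriori} argument, because every element-wise integration by parts is performed on the piecewise-affine $\Psi_h$ (whose normal traces lie in $\mathbf L^2(E)$) and the continuous equation is invoked only in its weak form~\eqref{continuous nonlinear}; the medius-analysis/enrichment machinery is a tool for the \emph{a~priori} analysis (Theorem~\ref{energy and $L^2$ norm error estimate}), not for Theorem~\ref{reliability and efficiency}.
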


	\section{A priori error estimate}\label{A priori error estimate}
This section is devoted to the proof of Theorem \ref{energy and $L^2$ norm error estimate}. Some auxiliary results are presented first. This is followed by a discrete inf-sup condition and the construction of a non-linear map for the application of Brouwer's fixed point theorem. The energy and  $\mathbf{L}^2$- norm estimates  follow as a consequence of the fixed point and duality  arguments. 
	\subsection{Auxiliary results}\label{auxiliary results}
		\begin{lem}{(Poincar\'e type inequalities)}\cite{Poincare_Suli}  \label{Poincare type inequality}
		Let $\Omega$ be a bounded open subset of $\mathbb{R}^2$ with Lipschitz continuous boundary $\partial \Omega$. 
		\begin{enumerate}
			\item For $\varphi \in H^1_0(\Omega)$, there exists a positive constant $\alpha_0 =\alpha_0(\Omega)$ such that $\alpha_0 \norm{\varphi}_{0} \leq \norm{\nabla\varphi}_{0} .$
			\item 	For $\varphi \in H^1(\mathcal{T})$, there exists a constant $C_P>0$ independent of $h $ and $\varphi$ such that for $1 \leq r < \infty$,	$\norm{\varphi}_{L^r(\Omega)} \leq C_P \norm{\varphi}_{h}.$
		\end{enumerate}		
	\end{lem}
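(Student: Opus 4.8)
The plan is to establish the two inequalities separately, since the first is the classical Poincaré inequality on $H^1_0(\Omega)$ and the second is its broken (piecewise $H^1$) analogue adapted to the discrete setting. For part (1), I would invoke the standard Poincaré–Friedrichs inequality: since $\Omega$ is bounded and $\varphi \in H^1_0(\Omega)$, extending $\varphi$ by zero outside $\Omega$ and using either a direct one-dimensional integration along lines through a slab containing $\Omega$, or the compact embedding $H^1_0(\Omega) \hookrightarrow L^2(\Omega)$ together with a contradiction/compactness argument, yields a constant $\alpha_0 = \alpha_0(\Omega) > 0$ with $\alpha_0 \norm{\varphi}_0 \le \norm{\nabla \varphi}_0$. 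This part requires no mesh-dependent considerations.

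For part (2), the key point is that functions in $H^1(\mathcal{T})$ are only piecewise $H^1$ and need not vanish on $\partial\Omega$, so the bound must be in terms of the full discrete norm $\norm{\varphi}_h$, which includes the boundary penalty term $\sum_{E \in \mathcal{E}_h^\partial} \frac{\sigma}{h_E}\int_E \varphi^2\,\ds$. The plan is to use a broken Poincaré–Friedrichs inequality: first control $\norm{\varphi}_{L^2(\Omega)}$ by $\norm{\nabla\varphi}_{L^2(\mathcal{T})}$ plus a boundary trace term, which follows from a trace-type argument (e.g. integrating $\varphi^2$ against $\operatorname{div}$ of a fixed vector field pointing outward, then splitting the boundary integral over the element boundaries and absorbing jump contributions, which for $H^1(\mathcal{T})$-functions controlled by $\norm{\cdot}_h$ on the skeleton can be handled — though here only the exterior boundary edge terms appear in $\norm{\cdot}_h$, so one must be slightly careful). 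Then to pass from $L^2(\Omega)$ to $L^r(\Omega)$ for $1 \le r < \infty$, I would use the Sobolev embedding on each element together with a scaling argument, or more cleanly a broken Sobolev embedding $H^1(\mathcal{T}) \hookrightarrow L^r(\Omega)$ valid in two dimensions for every finite $r$, with a constant independent of $h$ for shape-regular meshes. The $h$-independence is the crucial feature and follows from the standard scaling of trace and Sobolev constants on a reference element combined with shape-regularity.

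The main obstacle I anticipate is ensuring the constant $C_P$ in part (2) is genuinely independent of $h$ while the discrete norm $\norm{\cdot}_h$ only controls the boundary (not interior) edge jumps weighted by $\sigma/h_E$. One must verify that the broken Poincaré inequality does not secretly require control of interior jumps — this works because $\varphi \in H^1(\mathcal{T})$ here will, in the applications, be taken as an actual $H^1(\Omega)$ function or a discrete finite element function where the relevant broken-Sobolev estimate closes using only element gradients and the boundary data. A clean route is to cite the established broken Poincaré–Friedrichs and discrete Sobolev inequalities (as the lemma does via \cite{Poincare_Suli}) and simply note that the weighting $\sigma/h_E \ge \sigma$ for $h_E \le 1$ makes the boundary penalty term dominate a plain $L^2(\partial\Omega)$ boundary trace term, so the standard inequality $\norm{\varphi}_{L^r(\Omega)} \lesssim \norm{\nabla\varphi}_{L^2(\mathcal{T})} + \norm{\varphi}_{L^2(\partial\Omega)} \le C_P \norm{\varphi}_h$ follows with $C_P$ depending only on $\Omega$, $r$, the shape-regularity of $\mathcal{T}$, and $\sigma$ (and, as usual, one may always enlarge $C_P$ to absorb a uniformly bounded mesh factor).
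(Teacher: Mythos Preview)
The paper does not prove this lemma at all: it is stated as an auxiliary result with a citation to \cite{Poincare_Suli} and no argument is given. So there is no ``paper's own proof'' to compare against, and your sketch of the standard arguments (classical Poincar\'e--Friedrichs for part (1), broken Poincar\'e/Sobolev for part (2)) is an appropriate way to fill in what the paper leaves to the reference.

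That said, the concern you flag in your last paragraph is not merely a technicality but a genuine gap in the lemma \emph{as literally stated}. The Nitsche norm $\norm{\cdot}_h$ penalises only boundary edges, so for a general $\varphi\in H^1(\mathcal{T})$ the inequality $\norm{\varphi}_{L^r(\Omega)}\le C_P\norm{\varphi}_h$ is false: take $\varphi$ equal to $1$ on a single interior element (not touching $\partial\Omega$) and $0$ elsewhere; then $\norm{\varphi}_h=0$ while $\norm{\varphi}_{L^r(\Omega)}>0$. Your proposed final inequality $\norm{\varphi}_{L^r(\Omega)}\lesssim \norm{\nabla\varphi}_{L^2(\mathcal{T})}+\norm{\varphi}_{L^2(\partial\Omega)}$ therefore cannot hold in this generality either. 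The correct statement (and what the cited reference actually proves) requires either (i) $\varphi\in H^1(\Omega)$, so that interior jumps vanish --- which is exactly how the lemma is used in the Nitsche sections, since $X_h\subset H^1(\Omega)$ --- or (ii) the full dG norm $\norm{\cdot}_{\rm dG}$ that also penalises interior jumps, which is how it is used in Section~6. Your instinct to resolve the issue by restricting to the functions that actually arise in the applications is the right one; you should just state explicitly that the lemma, read verbatim with $H^1(\mathcal{T})$ and $\norm{\cdot}_h$, is an overstatement, and that your proof covers the two cases that the paper actually needs.
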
	
\begin{lem}[{\it Trace inequalities}]\cite{Ern} \label{discrete trace inequality} 
	$\displaystyle (i)  \; \text{For  }{\rm v}\!\in\! H^1(T), T \!\in\! \mathcal{T}, \norm{{\rm v}}_{0,\partial T}^2 \lesssim (h_T^{-1} \norm{{\rm v}}_{0,T}^2 + \norm{{\rm v}}_{0,T}\norm{\nabla {\rm v}}_{0,T}).$ $\displaystyle (ii)  \; 
	\text{For all }\Phi_{h}\!\in\! \X_{h}, 
	\sum_{E \in \mathcal{E}_h^{\partial}} h_E \vertiii{\nabla\Phi_{h}\nu_E}_{0,E}^2 \lesssim \vertiii{\nabla\Phi_{h}}_{0}^2. $
\end{lem}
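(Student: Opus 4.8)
The plan is to establish $(i)$ by the classical divergence-theorem argument on a single element followed by an affine scaling, and then to obtain $(ii)$ as an immediate consequence, using that $\nabla\Phi_h$ is elementwise constant on $\mathcal{T}$ together with shape-regularity. The proof is classical (this is why the lemma is cited from \cite{Ern}); the only points needing care are the quantitative geometric estimates, which must be traced to the shape-regularity constant of $\mathcal{T}$.

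For $(i)$, fix $T\in\mathcal{T}$ and a point $x_T\in T$ whose distance to $\partial T$ is bounded below by $c\,h_T$, with $c>0$ depending only on the shape-regularity of $\mathcal{T}$; for a triangle the incentre works, since then the relevant distance is the inradius $\rho_T\gtrsim h_T$. Because $T$ is convex, the hyperplane through any point $y$ of an edge of $T$ with outward unit normal $\nu$ supports $T$, whence the vector field $\boldsymbol\psi(x):=x-x_T$ satisfies $\boldsymbol\psi\cdot\nu\ge c\,h_T$ on $\partial T$, while $|\boldsymbol\psi|\le h_T$ on $T$ and $\nabla\cdot\boldsymbol\psi=2$. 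Applying the divergence theorem to $v^2\boldsymbol\psi$ (first for smooth $v$, then by density for $v\in H^1(T)$) and Cauchy--Schwarz give
\begin{align*}
c\,h_T\,\norm{v}_{0,\partial T}^2\le\int_{\partial T}v^2(\boldsymbol\psi\cdot\nu)\ds=\int_T\big(2v^2+2v\,\nabla v\cdot\boldsymbol\psi\big)\dx\le 2\norm{v}_{0,T}^2+2h_T\,\norm{v}_{0,T}\norm{\nabla v}_{0,T};
\end{align*}
dividing by $c\,h_T$ yields $(i)$. Equivalently, one may pull $v$ back to a fixed reference triangle, apply the multiplicative trace inequality there, and scale back; the resulting constant is again controlled by shape-regularity, and this is the route in \cite{Ern, ciarlet}.

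For $(ii)$, let $E\in\mathcal{E}_h^{\partial}$ be an edge of the triangle $T\in\mathcal{T}$, and let $G$ denote the value of $\nabla\Phi_h$ on $T$, which is a constant matrix since $\Phi_h|_T\in P_1(T)\times P_1(T)$. Then $\nabla\Phi_h\nu_E=G\nu_E$ is constant along $E$, and with $|\cdot|$ the Frobenius norm,
\begin{align*}
h_E\,\vertiii{\nabla\Phi_h\nu_E}_{0,E}^2=h_E^2\,|G\nu_E|^2\le h_E^2\,|G|^2=\frac{h_E^2}{|T|}\,\vertiii{\nabla\Phi_h}_{0,T}^2.
\end{align*}
Shape-regularity gives $|T|\gtrsim h_T^2\ge h_E^2$, hence $h_E^2/|T|\lesssim 1$. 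Summing over $E\in\mathcal{E}_h^{\partial}$, and noting that each triangle carries at most three edges so that the right-hand side counts every element a bounded number of times,
\begin{align*}
\sum_{E\in\mathcal{E}_h^{\partial}}h_E\,\vertiii{\nabla\Phi_h\nu_E}_{0,E}^2\lesssim\sum_{T\in\mathcal{T}}\vertiii{\nabla\Phi_h}_{0,T}^2=\vertiii{\nabla\Phi_h}_{0}^2,
\end{align*}
which is $(ii)$. One can also deduce $(ii)$ directly from $(i)$ applied componentwise to the constant entries of $\nabla\Phi_h$, for which the second term on the right-hand side of $(i)$ vanishes and $h_E\le h_T$ absorbs the factor $h_T^{-1}$.

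I do not expect a genuine obstacle: the content is standard finite-element analysis. If anything, the subtle step is bookkeeping the geometric constants — the bound $\boldsymbol\psi\cdot\nu\gtrsim h_T$ on $\partial T$ and the volume estimate $|T|\gtrsim h_T^2$ — in terms of the shape-regularity parameter, and (in the scaling variant) checking the multiplicative trace inequality on the fixed reference triangle before transporting it to $T$.
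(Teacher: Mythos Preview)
The paper does not give its own proof of this lemma; it is stated with a citation to \cite{Ern} as a standard result and used as a black box throughout. Your argument is correct and is precisely the classical route: part $(i)$ via the divergence theorem with the radial field $x-x_T$ (or equivalently by scaling from a reference element), and part $(ii)$ by exploiting that $\nabla\Phi_h$ is elementwise constant together with the shape-regularity bound $|T|\gtrsim h_T^2$. There is nothing to compare against in the paper itself, and no gap in what you wrote.
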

	\begin{lem}\label{Interpolation estimate}{(Interpolation estimate)}\cite{ciarlet}
			For ${\rm v} \!\in\! H^{1+\alpha}(\Omega) \text{ with } \alpha \!\in\! (0,1]$, there exists  ${\rm{I}}_{h}{\rm v} \!\in\! X_h$ such that
		\begin{align*}
	\norm{{\rm v}-{\rm{I}}_{h}{\rm v} }_{0} +h	\norm{{\rm v}-{\rm{I}}_{h}{\rm v} }_{1} \leq C_I h^{1+\alpha}  \abs{{\rm v}}_{H^{1+ \alpha}(\Omega)},
		\end{align*}	
	where $C_I$  is a positive constant independent of $h$.
	\end{lem}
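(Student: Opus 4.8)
The plan is to let $\mathrm{I}_h$ be the usual nodal (Lagrange) interpolation operator onto $X_h$, i.e.\ $(\mathrm{I}_h{\rm v})(z)={\rm v}(z)$ at every vertex $z$ of $\mathcal{T}$ and $\mathrm{I}_h{\rm v}|_T\in P_1(T)$ for each $T\in\mathcal{T}$. This is well defined on $H^{1+\alpha}(\Omega)$ with $\alpha\in(0,1]$: the spatial dimension is $2$, so $1+\alpha>1=d/2$ and the Sobolev embedding $H^{1+\alpha}(\Omega)\hookrightarrow C^0(\overline{\Omega})$ holds (and likewise on each triangle $T$ and on the reference triangle $\widehat{T}$), hence point values, and therefore $\mathrm{I}_h{\rm v}$, make sense. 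From here the estimate follows from the classical local-estimate / affine-scaling / summation scheme; the only point needing care is the fractional smoothness index $1+\alpha$.

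First I would establish a scale-invariant bound on $\widehat{T}$ with the associated nodal interpolant $\widehat{\mathrm{I}}$. Since $\widehat{\mathrm{I}}$ reproduces $P_1(\widehat{T})$ and the three vertex functionals are bounded on $C^0(\widehat{T})\supset H^{1+\alpha}(\widehat{T})$, the operator $\mathrm{Id}-\widehat{\mathrm{I}}\colon H^{1+\alpha}(\widehat{T})\to H^m(\widehat{T})$ is bounded for $m=0,1$ and annihilates $P_1(\widehat{T})$. Because the fractional (Gagliardo) seminorm $\abs{\cdot}_{H^{1+\alpha}(\widehat{T})}$ vanishes precisely on $P_1(\widehat{T})$, the Bramble--Hilbert/Deny--Lions lemma — which holds for fractional order as well — yields $\norm{\widehat{\rm v}-\widehat{\mathrm{I}}\,\widehat{\rm v}}_{H^m(\widehat{T})}\lesssim\abs{\widehat{\rm v}}_{H^{1+\alpha}(\widehat{T})}$ for $m=0,1$.

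Next I would transfer this to a general $T\in\mathcal{T}$ through the affine bijection $F_T(\widehat{x})=B_T\widehat{x}+b_T$ onto $T$, using shape-regularity ($\norm{B_T}\lesssim h_T$, $\norm{B_T^{-1}}\lesssim h_T^{-1}$, $\abs{\det B_T}\sim h_T^2$), the affine equivalence $\widehat{\mathrm{I}_h{\rm v}}=\widehat{\mathrm{I}}\,\widehat{\rm v}$, and the standard change-of-variables bounds for Sobolev (semi)norms — the Gagliardo seminorm in particular scaling as $\abs{\widehat{\rm v}}_{H^{1+\alpha}(\widehat{T})}\lesssim h_T^{\alpha}\abs{{\rm v}}_{H^{1+\alpha}(T)}$. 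This yields the local estimate $\norm{{\rm v}-\mathrm{I}_h{\rm v}}_{0,T}+h_T\abs{{\rm v}-\mathrm{I}_h{\rm v}}_{H^1(T)}\lesssim h_T^{\,1+\alpha}\abs{{\rm v}}_{H^{1+\alpha}(T)}$ for every $T\in\mathcal{T}$. Finally I would square, sum over $T$, use $h_T\le h$ (and, as usual, $h\le1$), together with the crucial inequality $\sum_{T\in\mathcal{T}}\abs{{\rm v}}^2_{H^{1+\alpha}(T)}\le\abs{{\rm v}}^2_{H^{1+\alpha}(\Omega)}$ — valid because the Gagliardo seminorm is a double integral over $T\times T$ and the sets $T\times T$ are essentially disjoint subsets of $\Omega\times\Omega$ — to obtain $\norm{{\rm v}-\mathrm{I}_h{\rm v}}_0+h\norm{{\rm v}-\mathrm{I}_h{\rm v}}_1\le C_I h^{1+\alpha}\abs{{\rm v}}_{H^{1+\alpha}(\Omega)}$ with $C_I$ depending only on $\widehat{T}$, the shape-regularity constant and $\Omega$.

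The main obstacle is precisely the fractional index $1+\alpha$: one must check Deny--Lions with the Gagliardo seminorm on $\widehat{T}$, keep track of how that seminorm scales under $F_T$, and replace the familiar exact additivity $\sum_T\abs{{\rm v}}^2_{H^k(T)}=\abs{{\rm v}}^2_{H^k(\Omega)}$ of integer-order seminorms by the one-sided inequality above. An alternative that sidesteps fractional calculus is to use a Cl\'ement- or Scott--Zhang-type quasi-interpolation operator (which, unlike the nodal interpolant, is stable on $H^1$) and to obtain the $H^{1+\alpha}$ bound by real interpolation between the classical cases $\alpha=0$ and $\alpha=1$, recovering the correct power of $h$ by a scaling argument.
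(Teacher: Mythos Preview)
The paper does not give its own proof of this lemma; it is simply stated and attributed to \cite{ciarlet} as a standard interpolation estimate. Your proposal is the classical Ciarlet argument (nodal interpolation well defined by the embedding $H^{1+\alpha}(\Omega)\hookrightarrow C^0(\overline\Omega)$ in two dimensions, Bramble--Hilbert/Deny--Lions on the reference element, affine scaling, and summation with the subadditivity of the Gagliardo seminorm), which is precisely what the citation points to, so there is nothing to compare.
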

\begin{rem} \label{remark interpolation}
	Trace inequality stated in Lemma \ref{discrete trace inequality}$(i)$ yields $	\norm{{\rm v}-{\rm{I}}_{h}{\rm v} }_{h} \leq C_I h ^{\alpha} \abs{{\rm v}}_{H^{1+\alpha}(\Omega)}$ for a positive constant $C_I$ independent of $h$.
\end{rem}
\begin{lem}({Extension operator})\label{enrichment Nitsche}
	\cite{ Poincare_Brenner,Karakashian,Strenburg2018} Define the operator $\Pi_h: X_h \rightarrow V_h:=X_h \cap H^1_0(\Omega) $ using nodal values of freedom:
	$\displaystyle \Pi_h{\rm v}(n) = \begin{cases} 
	0 \quad  \quad \text{ for a node  } n \text{ on } \partial \Omega \\
	{\rm v}(n) \quad  \quad \text{ for a node  } n \text{ on } \Omega \setminus \partial \Omega
	\end{cases}$.
	 For all ${\rm v}\in X_h$, it holds that
	\begin{align}
	\big(\sum_{T \in \mathcal{T}} h_T^{-2} \norm{{\rm v}-\Pi_h {\rm v}}^2_{0,T} + \sum_{ E \in \mathcal{E}_h^{i}} h_E^{-1}  \norm{{\rm v}-\Pi_h {\rm v}}^2_{0,E}\big)^{\frac{1}{2}} \leq C_{e_1} \norm{{\rm v}}_h \label{enrichment2},  \\
	\norm{{\rm v}-\Pi_h {\rm v}}_h \leq  C_{e_2}(\sum_{ E \in \mathcal{E}^{\partial}_h}h_E^{-1}\int_{ E}{\rm v}^2{ \rm ds} )^{\frac{1}{2}} \leq C_{e_2} \norm{{\rm v}}_h, \quad \norm{\Pi_h {\rm v}}_h \leq C_{e_3} \norm{{\rm v}}_h \label{enrichment1},
	\end{align}
	where the constants $C_{e_1}$, $C_{e_2}$ and $C_{e_3} $ are independent of $h$.
\end{lem}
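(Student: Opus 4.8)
The plan is to exploit the structural fact that $w := v - \Pi_h v \in X_h$ vanishes at every interior node and coincides with $v$ at every boundary node, so that $w$ is supported only on the strip of triangles touching $\partial\Omega$ and is entirely controlled by the boundary nodal values of $v$. Let $\mathcal{N}^i$ and $\mathcal{N}^\partial$ denote the interior and boundary nodes of $\mathcal{T}$, and for a triangle $T$ (resp. an edge $E$) let $\mathcal{N}(T)$ (resp. $\mathcal{N}(E)$) be its vertices (resp. endpoints). I would combine this observation with three elementary shape-regularity tools: (a) the scaling equivalence for $\phi \in P_1(T)$, namely $\norm{\phi}_{0,T}^2 \eqsim h_T^2 \sum_{n \in \mathcal{N}(T)} \phi(n)^2$, together with the inverse estimate $\norm{\nabla \phi}_{0,T}^2 \lesssim h_T^{-2}\norm{\phi}_{0,T}^2$; (b) the edge analogue $\norm{\phi}_{0,E}^2 \eqsim h_E \sum_{n \in \mathcal{N}(E)} \phi(n)^2$; and (c) shape-regularity itself, which gives $h_T \eqsim h_E$ whenever $E \subset \partial T$ and bounds the number of triangles and edges meeting any fixed node by a constant depending only on the mesh-regularity parameter. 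Note that $w$ is continuous (it lies in $X_h \subset H^1(\Omega)$), so all edge integrals below are single-valued.

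First I would establish \eqref{enrichment2}. Since $w(n)=0$ for $n \in \mathcal{N}^i$ and $w(n)=v(n)$ for $n \in \mathcal{N}^\partial$, tool (a) yields $h_T^{-2}\norm{w}_{0,T}^2 \lesssim \sum_{n \in \mathcal{N}(T)\cap \mathcal{N}^\partial} v(n)^2$ on each $T$; summing over $\mathcal{T}$ and using that each boundary node lies in boundedly many triangles gives $\sum_{T \in \mathcal{T}} h_T^{-2}\norm{w}_{0,T}^2 \lesssim \sum_{n \in \mathcal{N}^\partial} v(n)^2$. The same argument with tool (b) on interior edges (where again only boundary endpoints contribute) gives $\sum_{E \in \mathcal{E}_h^{i}} h_E^{-1}\norm{w}_{0,E}^2 \lesssim \sum_{n \in \mathcal{N}^\partial} v(n)^2$. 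It then remains to absorb the nodal sum into $\norm{v}_h$: applying tool (b) in the reverse direction on each boundary edge (both of whose endpoints are boundary nodes) yields $\sum_{n \in \mathcal{N}^\partial} v(n)^2 \lesssim \sum_{E \in \mathcal{E}_h^{\partial}} h_E^{-1}\norm{v}_{0,E}^2 \le \sigma^{-1}\norm{v}_h^2$, the last step because the boundary penalty sum is precisely the second term of $\norm{\cdot}_h$. Chaining these estimates proves \eqref{enrichment2}.

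Next I would prove \eqref{enrichment1} by splitting $\norm{w}_h^2 = \norm{\nabla w}_0^2 + \sigma \sum_{E \in \mathcal{E}_h^{\partial}} h_E^{-1}\norm{w}_{0,E}^2$. The crucial simplification is that on a boundary edge $E$ both endpoints lie in $\mathcal{N}^\partial$, where $\Pi_h v = 0$; hence $w|_E = v|_E$ and the boundary sum equals $\sum_{E \in \mathcal{E}_h^{\partial}} h_E^{-1}\norm{v}_{0,E}^2$ exactly. For the gradient term I would use the inverse estimate in tool (a) and the volume bound already derived, $\norm{\nabla w}_0^2 \lesssim \sum_{T} h_T^{-2}\norm{w}_{0,T}^2 \lesssim \sum_{n \in \mathcal{N}^\partial} v(n)^2 \lesssim \sum_{E \in \mathcal{E}_h^{\partial}} h_E^{-1}\norm{v}_{0,E}^2$. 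Adding the two contributions gives the first inequality of \eqref{enrichment1}; the second inequality follows once more from $\sum_{E \in \mathcal{E}_h^{\partial}} h_E^{-1}\norm{v}_{0,E}^2 \le \sigma^{-1}\norm{v}_h^2$, and the bound $\norm{\Pi_h v}_h \le C_{e_3}\norm{v}_h$ is then immediate from the triangle inequality $\norm{\Pi_h v}_h = \norm{v - w}_h \le \norm{v}_h + \norm{w}_h$.

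The main issue is not difficulty but bookkeeping: one must check that every passage between element/edge sums and nodal sums, and back, costs only constants depending on the shape-regularity parameter (the maximal valence of a node and the ratios $h_T/h_E$), never on $h$. A secondary point worth flagging is that $C_{e_1}, C_{e_2}, C_{e_3}$ are allowed to depend on the fixed penalty parameter $\sigma$ through the factor $\sigma^{-1}$ used to absorb boundary nodal sums into $\norm{\cdot}_h$; since $\sigma$ is fixed throughout, this is harmless and the stated $h$-independence is preserved. No duality, interpolation, or compactness argument is needed -- the entire proof is elementary scaling on each simplex.
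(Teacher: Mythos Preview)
Your proposal is correct and is precisely the standard nodal-averaging argument that the cited references (Brenner, Karakashian--Pascal, Stenberg et al.) use; the paper itself does not give a proof of this lemma but simply quotes it from the literature. The key structural observation---that $w=v-\Pi_h v$ vanishes at interior nodes and coincides with $v$ at boundary nodes, so everything reduces to the boundary nodal sum $\sum_{n\in\mathcal{N}^\partial} v(n)^2$, which in turn is controlled by $\sum_{E\in\mathcal{E}_h^\partial} h_E^{-1}\norm{v}_{0,E}^2$---is exactly right, and your bookkeeping with the scaling equivalences (a)--(b), inverse estimates, and shape-regularity is the standard route. Your remark that the constants may depend on the fixed penalty parameter $\sigma$ is also apt and consistent with how the result is used downstream.
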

\noindent The next lemma states boundedness and coercivity results for $A(\cdot,\cdot)$ (resp. $A_h(\cdot,\cdot)$), boundedness results for  $B(\cdot,\cdot,\cdot,\cdot)$ and $C(\cdot,\cdot)$.  These results are a consequence of  H\"older's inequality, Lemma \ref{Poincare type inequality}, and the Sobolev embedding results \cite{ciarlet} $H^{1}(\Omega) \hookrightarrow L^{4}(\Omega)$ and $H^{1+\alpha}(\Omega) \hookrightarrow L^{\infty}(\Omega)$ for $\Omega \subset \mathbb{R}^2$ and $\alpha>0$. For detailed proofs, see \cite{DGFEM}.

\begin{lem}\label{2.3.17}  {(Properties of bilinear and quadrilinear forms)}\cite{ DGFEM} \label{boundedness}
	The following properties hold. 
	\begin{enumerate}[label=(\roman*)]
\item For all $\Theta$, $\Phi \!\in\! \V$, $\displaystyle A(\Theta,\Phi)\leq \vertiii{\Theta}_1 \vertiii{\Phi}_1,\text{ and } A(\Theta,\Theta) \gtrsim \vertiii{\Theta}_1^2. $
\item  For the choice of a sufficiently large parameter $\sigma$, there exists a 
	positive constant $C_{\rm ell} > 0$ such that $\text{ for all } \Theta_h, \Phi_{h} \in \X_h$, $\displaystyle A_{h}(\Theta_{h},\Phi_{h})\lesssim \vertiii{\Theta_{h}}_{h} \vertiii{\Phi_{h}}_{h}, \text{ and }  A_{h}(\Phi_{h}, \Phi_{h})  \geq C_{\rm ell} \; \vertiii{\Phi_{h}}_{h}^2.$
\item   For all $\Theta$, $\Phi \!\in\! \X$,
	$C(\Theta,\Phi)\lesssim  \epsilon^{-2} \vertiii{\Theta}_0 \vertiii{\Phi}_0$ and 
	  $C(\Theta,\Phi)\lesssim \epsilon^{-2} \vertiii{\Theta}_{h} \vertiii{\Phi}_{h}.$
\item  	For $\Xi,\boldsymbol \eta,\Theta,\Phi \!\in\! \X$,  (resp. $\Xi , \boldsymbol \eta \!\in\! \mathbf{H}^{1+\alpha}(\Omega)$ with $ 0<\alpha\le 1$, $\Theta$, $\Phi \!\in\!  \X$),
\begin{align*}
	& \int_{\Omega}(\Xi \cdot \boldsymbol \eta)(\Theta \cdot \Phi) {\dx} \lesssim \vertiii{\Xi}_1\vertiii{\boldsymbol \eta}_1\vertiii{\Theta}_1 \vertiii{\Phi}_1  \text{  and } 
	 \int_\Omega(\Xi \cdot \boldsymbol \eta)(\Theta \cdot \Phi) \dx \lesssim \vertiiih{\Xi}\vertiiih{\boldsymbol \eta}\vertiiih{\Theta} \vertiiih{\Phi}, \\ 
	&  B(\Xi,\boldsymbol \eta,\Theta,\Phi)\lesssim \epsilon^{-2} \vertiii{\Xi}_1\vertiii{\boldsymbol \eta}_1 \vertiii{\Theta}_1 \vertiii{\Phi}_1 
	\text{  and } 
	B(\Xi,\boldsymbol \eta,\Theta,\Phi ) \lesssim \epsilon^{-2} \vertiii{\Xi}_{h} \vertiii{\boldsymbol \eta}_{h} \vertiii{\Theta}_{h} \vertiii{\Phi}_{h}. \\
&
(\text{resp. } 
	B(\Xi,\boldsymbol \eta,\Theta,\Phi)\lesssim \epsilon^{-2}  \vertiii{\Xi}_{1+\alpha} \vertiii{\boldsymbol \eta}_{1+\alpha} \vertiii{\Theta}_0 \vertiii{\Phi}_0). 
	\end{align*}
	\item 	For $\boldsymbol \eta \!\in\! \X$ and for all $ \boldsymbol \eta_h,  \Phi_{h} \!\in\! \X_h$ (resp.   ${\boldsymbol \eta} \!\in\!  \h^{1+\alpha}(\Omega) \text{ with } 0<\alpha \le 1$),
	\begin{align*}
	&
	B(\boldsymbol \eta_h,\boldsymbol \eta_h, \boldsymbol \eta_h, \Phi_{h})- B(\boldsymbol \eta,\boldsymbol \eta, \boldsymbol \eta, \Phi_{h})   \lesssim \epsilon^{-2}(\vertiii{\boldsymbol \eta_h-\boldsymbol \eta}^2_h (\vertiii{\boldsymbol \eta_h}_h +  \vertiii{\boldsymbol \eta}_1) + \vertiii{\boldsymbol \eta_h-\boldsymbol \eta}_h \vertiii{\boldsymbol \eta}_{1}^2) \vertiii{\Phi_{h}}_h	\\
	&	(\text{resp. }B({\rm I}_{h}\boldsymbol \eta,{\rm I}_{h}\boldsymbol \eta, {\rm I}_{h}\boldsymbol \eta, \Phi_{h})- B(\boldsymbol \eta,\boldsymbol \eta, \boldsymbol \eta, \Phi_{h})   \lesssim  \epsilon^{-2} h^{2\alpha} \vertiii{\boldsymbol \eta}_{1+\alpha}^3\vertiiih{\Phi_{h}},  \\ 
	&
 B(\boldsymbol \eta,\boldsymbol \eta, \Theta_{h}, \Phi_{h}) - 	B({\rm I}_{h}\boldsymbol \eta,{\rm I}_{h}\boldsymbol \eta, \Theta_{h}, \Phi_{h}) \lesssim \epsilon^{-2} h^\alpha \vertiii{\boldsymbol \eta}_{1+\alpha}^2\vertiiih{\Theta_{h}} \vertiiih{\Phi_{h}}), \notag	
\end{align*}
	\end{enumerate}
		where the hidden constant in $"\lesssim"$ depends on the constants from $C_P,   C_S$ and 
		$\alpha_0$, and  are independent of $h.$
\end{lem}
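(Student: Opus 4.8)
The plan is to derive each item of the lemma from three standard tools: the Cauchy--Schwarz and generalized H\"older inequalities; the Poincar\'e-type estimates of Lemma~\ref{Poincare type inequality} and the trace estimates of Lemma~\ref{discrete trace inequality}; and the two-dimensional Sobolev embeddings $H^1(\Omega)\hookrightarrow L^4(\Omega)$ and $\h^{1+\alpha}(\Omega)\hookrightarrow\mathbf{L}^\infty(\Omega)$, the latter valid because $2(1+\alpha)>2$ for $\alpha>0$. For (i), writing $A(\Theta,\Phi)=\int_\Omega\nabla\theta_1\cdot\nabla\varphi_1\dx+\int_\Omega\nabla\theta_2\cdot\nabla\varphi_2\dx$, Cauchy--Schwarz on each integral and then on the two-term sum gives $A(\Theta,\Phi)\le\vertiii{\nabla\Theta}_0\vertiii{\nabla\Phi}_0\le\vertiii{\Theta}_1\vertiii{\Phi}_1$, while for $\Theta\in\V$ Lemma~\ref{Poincare type inequality}(1) yields $\vertiii{\Theta}_1^2\le(1+\alpha_0^{-2})\vertiii{\nabla\Theta}_0^2=(1+\alpha_0^{-2})A(\Theta,\Theta)$, hence coercivity. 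For (iii), $C(\Theta,\Phi)=-2\epsilon^{-2}\big(\int_\Omega\theta_1\varphi_1\dx+\int_\Omega\theta_2\varphi_2\dx\big)$, so Cauchy--Schwarz gives $C(\Theta,\Phi)\lesssim\epsilon^{-2}\vertiii{\Theta}_0\vertiii{\Phi}_0$, and the bound $\vertiii{\cdot}_0\le C_P\vertiii{\cdot}_h$, from Lemma~\ref{Poincare type inequality}(2) with $r=2$ applied componentwise, gives the second inequality.

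The only new ingredient in (ii) is control of the Nitsche boundary terms $\dual{\partial_\nu\theta,\varphi}_{\partial\Omega}$ for $\theta,\varphi\in X_h$: these are well defined edgewise because $\partial_\nu\theta|_E\in L^2(E)$ for piecewise affine $\theta$, so $\dual{\partial_\nu\theta,\varphi}_{\partial\Omega}=\sum_{E\in\mathcal{E}_h^{\partial}}\int_E(\partial_\nu\theta)\,\varphi\ds$. By Cauchy--Schwarz, first edgewise and then on the sum, $|\dual{\partial_\nu\theta,\varphi}_{\partial\Omega}|\le\big(\sum_{E\in\mathcal{E}_h^{\partial}}h_E\norm{\partial_\nu\theta}_{0,E}^2\big)^{1/2}\big(\sum_{E\in\mathcal{E}_h^{\partial}}h_E^{-1}\norm{\varphi}_{0,E}^2\big)^{1/2}$; the first factor is $\lesssim\norm{\nabla\theta}_0$ by Lemma~\ref{discrete trace inequality}(ii) and the second is $\le\sigma^{-1/2}\norm{\varphi}_h$. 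Combined with the obvious bound on the penalty term this gives $A_h(\Theta_h,\Phi_h)\lesssim\vertiiih{\Theta_h}\vertiiih{\Phi_h}$. For coercivity, expand $A_h(\Phi_h,\Phi_h)$ componentwise, bound the cross term $-2\dual{\partial_\nu\varphi_i,\varphi_i}_{\partial\Omega}$ as above, and use Young's inequality with parameter $\delta>0$; summing the components gives $A_h(\Phi_h,\Phi_h)\ge(1-\delta)\vertiii{\nabla\Phi_h}_0^2+\big(1-C_{\rm tr}^2(\delta\sigma)^{-1}\big)\sum_{E\in\mathcal{E}_h^{\partial}}\tfrac{\sigma}{h_E}\vertiii{\Phi_h}_{0,E}^2$, where $C_{\rm tr}$ is the constant in Lemma~\ref{discrete trace inequality}(ii). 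Taking $\delta=\tfrac12$ and then $\sigma\ge 4C_{\rm tr}^2$ makes both coefficients $\ge\tfrac12$, so $C_{\rm ell}=\tfrac12$; this is exactly where the largeness of $\sigma$ is used.

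For (iv), using $|\Xi\cdot\boldsymbol\eta|\le|\Xi|\,|\boldsymbol\eta|$ pointwise, every term reduces to estimating $\int_\Omega|\Xi|\,|\boldsymbol\eta|\,|\Theta|\,|\Phi|\dx$ by generalized H\"older: for the $\vertiii{\cdot}_1$ (resp.\ $\vertiiih{\cdot}$) bound one places all four factors in $L^4$ and invokes $H^1\hookrightarrow L^4$ (resp.\ Lemma~\ref{Poincare type inequality}(2) with $r=4$, componentwise); for the $\h^{1+\alpha}$ case one places $\Xi,\boldsymbol\eta$ in $\mathbf{L}^\infty$ and $\Theta,\Phi$ in $\mathbf{L}^2$. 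Since $B=\tfrac{2}{3\epsilon^{2}}\int_\Omega\big((\Xi\cdot\boldsymbol\eta)(\Theta\cdot\Phi)+2(\Xi\cdot\Theta)(\boldsymbol\eta\cdot\Phi)\big)\dx$ is a fixed $\epsilon^{-2}$-multiple of two such integrals, its bounds follow. For (v) I would expand by multilinearity: with $\mathbf{e}:=\boldsymbol\eta_h-\boldsymbol\eta$, $B(\boldsymbol\eta_h,\boldsymbol\eta_h,\boldsymbol\eta_h,\Phi_h)-B(\boldsymbol\eta,\boldsymbol\eta,\boldsymbol\eta,\Phi_h)$ is the sum of the seven multilinear terms carrying one, two, or three factors $\mathbf{e}$ in their first three slots (the remaining slots holding $\boldsymbol\eta$); the one- and two-$\mathbf{e}$ terms are bounded by the H\"older argument of (iv), applying $H^1\hookrightarrow L^4$ to the $\boldsymbol\eta$-factors and Lemma~\ref{Poincare type inequality}(2) to $\mathbf{e}$ and $\Phi_h$ (which gives the $\vertiii{\mathbf{e}}_h\vertiii{\boldsymbol\eta}_1^2$ and $\vertiii{\mathbf{e}}_h^2\vertiii{\boldsymbol\eta}_1$ contributions), and the single term $B(\mathbf{e},\mathbf{e},\mathbf{e},\Phi_h)$ is re-split by writing one factor $\mathbf{e}=\boldsymbol\eta_h-\boldsymbol\eta$, which produces the factor $\vertiii{\boldsymbol\eta_h}_h+\vertiii{\boldsymbol\eta}_1$. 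The same expansion with $\mathbf{e}:=\boldsymbol\eta-{\rm I}_h\boldsymbol\eta$, using the interpolation bound $\vertiii{\mathbf{e}}_h\lesssim h^\alpha\vertiii{\boldsymbol\eta}_{1+\alpha}$ of Remark~\ref{remark interpolation} in all four $L^4$-factors, yields the last displayed inequality of (v).

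The delicate step is the cubic ``resp.'' bound $B({\rm I}_h\boldsymbol\eta,{\rm I}_h\boldsymbol\eta,{\rm I}_h\boldsymbol\eta,\Phi_h)-B(\boldsymbol\eta,\boldsymbol\eta,\boldsymbol\eta,\Phi_h)\lesssim\epsilon^{-2}h^{2\alpha}\vertiii{\boldsymbol\eta}_{1+\alpha}^3\vertiiih{\Phi_h}$. Bounding the expanded terms naively by four $L^4$-norms gives only $h^\alpha$, since $\vertiii{{\rm I}_h\boldsymbol\eta-\boldsymbol\eta}_h\lesssim h^\alpha\vertiii{\boldsymbol\eta}_{1+\alpha}$. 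To recover $h^{2\alpha}$ one keeps at least one factor $\mathbf{e}:={\rm I}_h\boldsymbol\eta-\boldsymbol\eta$ in an $L^2$-slot of each expanded term, exploiting the sharper rate $\vertiii{\mathbf{e}}_0\lesssim h^{1+\alpha}\vertiii{\boldsymbol\eta}_{1+\alpha}$ of Lemma~\ref{Interpolation estimate}, while placing the remaining exact or interpolated factors in $\mathbf{L}^\infty(\Omega)$ --- the factors $\boldsymbol\eta$ controlled by $\vertiii{\boldsymbol\eta}_{1+\alpha}$ through the Sobolev embedding, the factors ${\rm I}_h\boldsymbol\eta$ then controlled by the same quantity since the nodal values of the affine interpolant coincide with those of the continuous function $\boldsymbol\eta$ --- and $\Phi_h$ in $\mathbf{L}^2(\Omega)$; since $0<\alpha\le1$ and $h\le1$, $h^{1+\alpha}\le h^{2\alpha}$. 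The remainder --- tracking which of the six scalar summands of $B$ each estimate applies to, together with the $\epsilon$- and embedding-constant dependence --- is routine bookkeeping, carried out as in \cite{DGFEM}.
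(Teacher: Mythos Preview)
Your proposal is correct and follows the same approach the paper indicates: the paper does not give a full proof but states that the lemma is ``a consequence of H\"older's inequality, Lemma~\ref{Poincare type inequality}, and the Sobolev embedding results $H^{1}(\Omega)\hookrightarrow L^{4}(\Omega)$ and $H^{1+\alpha}(\Omega)\hookrightarrow L^{\infty}(\Omega)$'' and refers to \cite{DGFEM} for details, which is precisely the toolkit you use. One small expository point: for the $h^{2\alpha}$ cubic bound you switch tacitly from the seven-term binomial expansion to a description that fits the three-term telescoping expansion (each term carrying exactly one factor $\mathbf{e}={\rm I}_h\boldsymbol\eta-\boldsymbol\eta$, the other two slots holding $\boldsymbol\eta$ or ${\rm I}_h\boldsymbol\eta$); either expansion works, but the telescoping one matches your $L^2\times L^\infty\times L^\infty\times L^2$ H\"older split cleanly, and with it the argument that $\vertiii{\mathbf{e}}_0\lesssim h^{1+\alpha}\vertiii{\boldsymbol\eta}_{1+\alpha}\le h^{2\alpha}\vertiii{\boldsymbol\eta}_{1+\alpha}$ goes through exactly as you describe.
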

\noindent We now state the well-posedness and regularity of solutions of a second-order linear system of equations \eqref{2.4.2.1} and a perturbation result \eqref{2.4.3.1.1} that is important to prove the discrete inf-sup condition in the next section. The proof follows analogous to the proof in Theorem 4.7 of \cite{DGFEM} and is skipped.
\begin{lem} (Linearized systems)\label{linear equation1 for discrete inf-sup} Let $\Psi $ be a regular solution of \eqref{continuous nonlinear}. For a given $\Theta_{h} \!\in\! \X_h$ with $\vertiiih{\Theta_{h}}=1$, there exist  $\boldsymbol{\xi}$ and $\boldsymbol{\eta}\!\in\! \mathbf{H}^{1+\alpha}(\Omega)\cap \V $ that solve the linear systems 
	\begin{align} 
	A(\boldsymbol \xi,\Phi)=3B(\Psi,\Psi,\Theta_{h},\Phi)+C(\Theta_{h},\Phi) \,\, \text{ for all } \Phi \in \V \,\,\,
	\text{and}\label{2.4.2.1}\\
	A(\boldsymbol \eta,\Phi)=3B(\Psi,\Psi, \Pi_h\Theta_{h},\Phi)+C(\Pi_h\Theta_{h},\Phi)  \,\, \text{ for all }  \Phi \in \V \,\,\,\,\,\,\,\,\label{2.4.3.1.1}
	\end{align}	
	such that
	\begin{align}\label{2.4.4.0}
	\vertiii{\boldsymbol \xi}_{1+\alpha} \lesssim \epsilon^{-2}(1+\vertiii{\Psi}_{1+\alpha}^2)\quad \text{ and } \quad \vertiii{\nabla (\boldsymbol\eta-\boldsymbol \xi)}_{0}
	\lesssim \epsilon^{-2}h(1+\vertiii{\Psi}_{1+\alpha}^2),
	\end{align}
	where the constant hidden in  $ "\lesssim" $ depends on $C_S$, $\alpha_0$ and $C_{e_1}$.
\end{lem}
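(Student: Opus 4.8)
The plan is to treat \eqref{2.4.2.1} and \eqref{2.4.3.1.1} as two linear elliptic source problems for the Laplacian (via the coercive bilinear form $A$) with right-hand sides living in $\V^*$, and then to exploit elliptic regularity on the polygon $\Omega$ together with the Sobolev embeddings already recorded before Lemma \ref{2.3.17}. First I would check that the linear functionals $\Phi \mapsto 3B(\Psi,\Psi,\Theta_h,\Phi)+C(\Theta_h,\Phi)$ and $\Phi \mapsto 3B(\Psi,\Psi,\Pi_h\Theta_h,\Phi)+C(\Pi_h\Theta_h,\Phi)$ are bounded on $\V$: by Lemma \ref{boundedness}(iii)--(iv) the first is bounded by $\epsilon^{-2}(1+\vertiii{\Psi}_{1+\alpha}^2)\vertiii{\Theta_h}_h \vertiii{\Phi}_1$ and similarly for the second with $\Pi_h\Theta_h$ in place of $\Theta_h$ (using $\vertiii{\Pi_h\Theta_h}_h \le C_{e_3}\vertiii{\Theta_h}_h = C_{e_3}$ from Lemma \ref{enrichment Nitsche}). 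Coercivity of $A$ on $\V$ from Lemma \ref{boundedness}(i) and the Lax--Milgram lemma then give existence and uniqueness of $\boldsymbol\xi,\boldsymbol\eta \in \V$, with an a priori $\h^1$-bound of the stated order. The $\h^{1+\alpha}$-regularity is the standard elliptic shift theorem for the (vector) Poisson problem on a polygonal domain with index of elliptic regularity $\alpha$ (see \cite{grisvard1992}), valid because the data $2\epsilon^{-2}(\ldots)$ sits in $\mathbf{L}^2(\Omega)$ — indeed in $\mathbf{H}^{\alpha'}$ for $\alpha'$ slightly below $\alpha$ is not even needed, $\mathbf{L}^2$ suffices for $\h^{1+\alpha}$ with $\alpha\le 1$. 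Tracking constants through the shift estimate and the bound on the source functional yields $\vertiii{\boldsymbol\xi}_{1+\alpha}\lesssim \epsilon^{-2}(1+\vertiii{\Psi}_{1+\alpha}^2)$, which is the first claim in \eqref{2.4.4.0}.

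For the second, comparison estimate, I would subtract the two weak formulations: $A(\boldsymbol\eta-\boldsymbol\xi,\Phi) = 3B(\Psi,\Psi,\Pi_h\Theta_h-\Theta_h,\Phi) + C(\Pi_h\Theta_h-\Theta_h,\Phi)$ for all $\Phi\in\V$. Testing with $\Phi=\boldsymbol\eta-\boldsymbol\xi$ and using coercivity of $A$, it suffices to bound the right-hand side by $\epsilon^{-2}h(1+\vertiii{\Psi}_{1+\alpha}^2)\vertiii{\nabla(\boldsymbol\eta-\boldsymbol\xi)}_0$. The key point is that $\Pi_h\Theta_h - \Theta_h$ is small: by \eqref{enrichment2} in Lemma \ref{enrichment Nitsche} one controls $\sum_T h_T^{-2}\norm{\Theta_h-\Pi_h\Theta_h}_{0,T}^2$ by $\vertiii{\Theta_h}_h^2 = 1$, so in particular $\vertiii{\Theta_h-\Pi_h\Theta_h}_0 \lesssim h$. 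For the $C$-term this immediately gives the factor $h$. For the $B$-term, I would use the refined bound in Lemma \ref{boundedness}(iv), $B(\Xi,\boldsymbol\eta,\Theta,\Phi)\lesssim \epsilon^{-2}\vertiii{\Xi}_{1+\alpha}\vertiii{\boldsymbol\eta}_{1+\alpha}\vertiii{\Theta}_0\vertiii{\Phi}_0$, with $\Xi=\boldsymbol\eta=\Psi\in\h^{1+\alpha}$ and $\Theta=\Pi_h\Theta_h-\Theta_h$; since that estimate only asks for the $\mathbf{L}^2$-norm of the third slot, the factor $\vertiii{\Pi_h\Theta_h-\Theta_h}_0\lesssim h$ appears, and we obtain $B(\Psi,\Psi,\Pi_h\Theta_h-\Theta_h,\Phi)\lesssim \epsilon^{-2}h(1+\vertiii{\Psi}_{1+\alpha}^2)\vertiii{\Phi}_0$. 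Combining and dividing by $\vertiii{\nabla(\boldsymbol\eta-\boldsymbol\xi)}_0$ (using the Poincaré inequality of Lemma \ref{Poincare type inequality} to pass between $\vertiii{\cdot}_1$ and $\vertiii{\nabla\cdot}_0$ on $\V$) gives the second bound in \eqref{2.4.4.0}.

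The main obstacle, and the place where care is genuinely needed, is the elliptic shift step: one must be sure that the polygonal-domain regularity constant is exactly governed by the index $\alpha$ declared for the problem and that it applies componentwise to the decoupled Laplace systems $A(\boldsymbol\xi,\Phi)=(\text{data},\Phi)$, so that no loss occurs and the $\h^{1+\alpha}$ norm is controlled purely by the $\mathbf{L}^2$ norm of the data. After that, everything reduces to bookkeeping with the already-established boundedness properties in Lemma \ref{boundedness} and the enrichment estimates in Lemma \ref{enrichment Nitsche}; the argument is essentially identical in structure to Theorem 4.7 of \cite{DGFEM}, the only new ingredient being the use of the $\mathbf{L}^2$-in-the-last-slot bound for $B$ to absorb the operator $\Pi_h$ without paying a derivative.
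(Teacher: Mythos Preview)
Your proposal is correct and matches the approach the paper has in mind: the paper itself skips the proof entirely, referring to Theorem~4.7 of \cite{DGFEM}, and your reconstruction---Lax--Milgram for existence, the polygonal elliptic shift with $\mathbf{L}^2$ data for the $\mathbf{H}^{1+\alpha}$ bound on $\boldsymbol{\xi}$, and subtraction plus coercivity together with the $\mathbf{L}^2$-in-the-third-slot estimate for $B$ and the bound $\vertiii{\Theta_h-\Pi_h\Theta_h}_0\lesssim h$ from \eqref{enrichment2} for the perturbation estimate---is precisely that argument. The dependencies you track ($C_S$ for the embedding $H^{1+\alpha}\hookrightarrow L^\infty$, $\alpha_0$ for Poincar\'e, $C_{e_1}$ for the enrichment $L^2$-estimate) are exactly those listed in the lemma.
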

The next three lemmas concern  local efficiency type estimates that yield  lower bounds for the errors, and are necessary for the medius analysis. The proofs follow from standard bubble function techniques extended to the non-linear system considered in this paper and is sketched in the Appendix.
\begin{lem} (Local efficiency I)\label{lem 3.1} \label{local efficiency for ball to ball}	Let $\Psi \!\in\! \boldsymbol{\mathcal{{X}}}$ be a regular solution of \eqref{continuous nonlinear}. For $\Phi_h \!\in\! \X_h$,   define  $\boldsymbol{ \eta}_T := (2\epsilon^{-2}(\abs{\Phi_{h}}^2 -1)\Phi_{h})|_T$, where    $T \!\in\! \mathcal{T}$ and $\boldsymbol{ \eta}_E := [\nabla \Phi_h \nu_E]_E$, where   $E$ is an edge  of $T$. Then the following estimates hold.
	\begin{align*}
(i)	\sum_{ T \in \mathcal{T}}	h_T^2 \vertiii{\boldsymbol{ \eta}_T}_{0,T}^2 + \sum_{ E \in \mathcal{E}_h^{i}} h_E  \vertiii{\boldsymbol{ \eta}_E}_{0,E}^2 \lesssim	\vertiii{\Psi - \Phi_{h}}^2_{h} (1+ \epsilon^{-2} (\vertiii{\Psi-\Phi_{h}}_{h} (\vertiii{\Phi_h}_{1}+ \vertiii{\Psi}_{1}) + \vertiii{\Psi}_{1}^2 +1))^2.
	\end{align*}
$(ii)$  For $\Psi \!\in\! \boldsymbol{\mathcal{{X}}}\cap \h^{1+\alpha}(\Omega)$, $0 < \alpha \le 1$, $\Phi_h:= {\rm I}_h \Psi$ in the definitions of $\boldsymbol{ \eta}_T$ and $\boldsymbol{ \eta}_E$,
	\begin{align*} 
	\sum_{ T \in \mathcal{T}}	h_T^2 \vertiii{\boldsymbol{ \eta}_T}_{0,T}^2 + \sum_{ E \in \mathcal{E}_h^{i}} h_E  \vertiii{\boldsymbol{ \eta}_E}_{0,E}^2 	\lesssim h^{2\alpha} (1+ \epsilon^{-2}h^{\alpha}(1+ \vertiii{ \Psi}_{1+\alpha}^2))^2\vertiii{ \Psi}_{1+\alpha}^2.
	\end{align*}
\end{lem}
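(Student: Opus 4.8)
\emph{Overall strategy.} The plan is to prove both local efficiency bounds by the classical bubble-function technique for residual estimators (see e.g.\ \cite{Verfurth}), adapted to the semilinear system: the element residual $\boldsymbol{\eta}_T$ and the interior-edge residual $\boldsymbol{\eta}_E$ are to be controlled by the broken gradient of the error $\Psi-\Phi_h$ together with the $\mathbf{L}^2(\Omega)$-norm of the difference of the cubic nonlinear terms, and the latter is then estimated using H\"older's inequality and the embeddings $H^{1}(\Omega)\hookrightarrow L^{p}(\Omega)$ ($p<\infty$) and $\h^{1+\alpha}(\Omega)\hookrightarrow\mathbf{L}^{\infty}(\Omega)$ recalled before Lemma~\ref{boundedness}, together with Lemma~\ref{Poincare type inequality}$(2)$ (for part $(i)$) and Lemma~\ref{Interpolation estimate}/Remark~\ref{remark interpolation} (for part $(ii)$). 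Throughout set $\mathbf{f}(\mathbf{w}):=2\epsilon^{-2}(1-\abs{\mathbf{w}}^{2})\mathbf{w}$, so that $\boldsymbol{\eta}_T=-\mathbf{f}(\Phi_h)|_T$ and the weak form \eqref{continuous nonlinear} reads $\int_\Omega\nabla\Psi\cdot\nabla\Phi\dx=\int_\Omega\mathbf{f}(\Psi)\cdot\Phi\dx$ for all $\Phi\in\V$.

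\emph{Element residual.} First I would fix $T\in\mathcal{T}$ and take the interior (cubic) bubble $b_T$; since $\Phi_h|_T$ is affine, $\boldsymbol{\eta}_T$ is componentwise a polynomial of degree $\le 3$ on $T$, and $b_T\boldsymbol{\eta}_T$ (extended by zero) lies in $\V$. By the standard norm equivalence $\int_T b_T\abs{q}^{2}\dx\gtrsim\vertiii{q}_{0,T}^{2}$ for vector polynomials $q$ of fixed degree on $T$, $\vertiii{\boldsymbol{\eta}_T}_{0,T}^{2}\lesssim\int_T b_T\abs{\boldsymbol{\eta}_T}^{2}\dx=-\int_T b_T\boldsymbol{\eta}_T\cdot\mathbf{f}(\Phi_h)\dx$. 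Adding and subtracting $\mathbf{f}(\Psi)$, testing the weak form with $b_T\boldsymbol{\eta}_T$, and using $\int_T\nabla\Phi_h\cdot\nabla(b_T\boldsymbol{\eta}_T)\dx=0$ (since $\nabla\Phi_h|_T$ is constant and $b_T$ vanishes on $\partial T$), I get $\vertiii{\boldsymbol{\eta}_T}_{0,T}^{2}\lesssim\bigl\lvert\int_T\nabla(\Psi-\Phi_h)\cdot\nabla(b_T\boldsymbol{\eta}_T)\dx\bigr\rvert+\bigl\lvert\int_T b_T\boldsymbol{\eta}_T\cdot(\mathbf{f}(\Phi_h)-\mathbf{f}(\Psi))\dx\bigr\rvert$. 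Cauchy--Schwarz, the inverse estimate $\vertiii{\nabla(b_T\boldsymbol{\eta}_T)}_{0,T}\lesssim h_T^{-1}\vertiii{\boldsymbol{\eta}_T}_{0,T}$, and $0\le b_T\le1$, after dividing by $\vertiii{\boldsymbol{\eta}_T}_{0,T}$, yield $h_T\vertiii{\boldsymbol{\eta}_T}_{0,T}\lesssim\vertiii{\nabla(\Psi-\Phi_h)}_{0,T}+h_T\vertiii{\mathbf{f}(\Phi_h)-\mathbf{f}(\Psi)}_{0,T}$.

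\emph{Nonlinear difference, edge residual, and part $(i)$.} Next I would use $\mathbf{f}(\Phi_h)-\mathbf{f}(\Psi)=2\epsilon^{-2}\bigl((\Phi_h-\Psi)-\abs{\Phi_h}^{2}(\Phi_h-\Psi)-((\Phi_h+\Psi)\cdot(\Phi_h-\Psi))\Psi\bigr)$ so that each summand carries the factor $\Phi_h-\Psi$; H\"older's inequality with exponents $(6,6,6)$ on the cubic summands, $H^{1}(\Omega)\hookrightarrow L^{6}(\Omega)$ for $\Phi_h,\Psi$, and Lemma~\ref{Poincare type inequality}$(2)$ for $\Psi-\Phi_h$ then give $\vertiii{\mathbf{f}(\Phi_h)-\mathbf{f}(\Psi)}_{0}\lesssim\epsilon^{-2}\vertiii{\Psi-\Phi_h}_h\bigl(1+\vertiii{\Phi_h}_1^{2}+(\vertiii{\Phi_h}_1+\vertiii{\Psi}_1)\vertiii{\Psi}_1\bigr)$. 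Squaring the element bound, summing over $T$ (using $\sum_T\vertiii{\nabla(\Psi-\Phi_h)}_{0,T}^{2}\le\vertiii{\Psi-\Phi_h}_h^{2}$, $h\lesssim1$, $1+a^{2}\le(1+a)^{2}$, and $\vertiii{\Phi_h}_1\lesssim\vertiii{\Psi}_1+\vertiii{\Psi-\Phi_h}_h$) produces the $\sum_T h_T^{2}\vertiii{\boldsymbol{\eta}_T}_{0,T}^{2}$ part of $(i)$. For an interior edge $E=\partial T^{+}\cap\partial T^{-}$ with patch $\omega_E=T^{+}\cup T^{-}$ I would take the quadratic edge bubble $b_E$ supported in $\omega_E$, extend $\boldsymbol{\eta}_E$ constantly to $\omega_E$, and use $\vertiii{\boldsymbol{\eta}_E}_{0,E}^{2}\lesssim\int_E b_E\abs{\boldsymbol{\eta}_E}^{2}\ds$; elementwise integration by parts on $\omega_E$ (with $\Delta\Phi_h=0$ on $T^{\pm}$, the jump across $E$ reproducing $\boldsymbol{\eta}_E$, and $b_E=0$ on $\partial\omega_E$) gives $\int_E b_E\abs{\boldsymbol{\eta}_E}^{2}\ds=\sum_{T\subset\omega_E}\int_T\nabla\Phi_h\cdot\nabla(b_E\boldsymbol{\eta}_E)\dx$, and after splitting $\nabla\Phi_h=\nabla(\Phi_h-\Psi)+\nabla\Psi$, using the weak form for $\nabla\Psi$, writing $\mathbf{f}(\Psi)=-\boldsymbol{\eta}_T+(\mathbf{f}(\Psi)-\mathbf{f}(\Phi_h))$ on each $T\subset\omega_E$, and using the scalings $\vertiii{b_E\boldsymbol{\eta}_E}_{0,\omega_E}\lesssim h_E^{1/2}\vertiii{\boldsymbol{\eta}_E}_{0,E}$, $\vertiii{\nabla(b_E\boldsymbol{\eta}_E)}_{0,\omega_E}\lesssim h_E^{-1/2}\vertiii{\boldsymbol{\eta}_E}_{0,E}$, I obtain $h_E^{1/2}\vertiii{\boldsymbol{\eta}_E}_{0,E}\lesssim\vertiii{\nabla(\Phi_h-\Psi)}_{0,\omega_E}+h_E\sum_{T\subset\omega_E}\vertiii{\boldsymbol{\eta}_T}_{0,T}+h_E\vertiii{\mathbf{f}(\Phi_h)-\mathbf{f}(\Psi)}_{0,\omega_E}$. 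Squaring, summing over $E\in\mathcal{E}_h^{i}$, using $h_E\le h_T$ for $T\subset\omega_E$ (so the $\boldsymbol{\eta}_T$-terms are controlled by the already-bounded element sum), the finite overlap of the patches, and $h\lesssim1$, completes $(i)$.

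\emph{Part $(ii)$ and the main obstacle.} For $(ii)$ I would set $\Phi_h={\rm I}_h\Psi$ and repeat the above: Remark~\ref{remark interpolation} gives $\vertiii{\nabla(\Psi-{\rm I}_h\Psi)}_{0}\le\vertiii{\Psi-{\rm I}_h\Psi}_h\lesssim h^{\alpha}\vertiii{\Psi}_{1+\alpha}$, which yields the leading $h^{2\alpha}\vertiii{\Psi}_{1+\alpha}^{2}$; for the nonlinear term I would instead use the sharper $\mathbf{L}^{2}$ interpolation bound $\vertiii{\Psi-{\rm I}_h\Psi}_{0}\lesssim h^{1+\alpha}\vertiii{\Psi}_{1+\alpha}$ of Lemma~\ref{Interpolation estimate} together with $\h^{1+\alpha}(\Omega)\hookrightarrow\mathbf{L}^{\infty}(\Omega)$ and $\vertiii{{\rm I}_h\Psi}_{\mathbf{L}^{\infty}(\Omega)}\le\vertiii{\Psi}_{\mathbf{L}^{\infty}(\Omega)}\lesssim\vertiii{\Psi}_{1+\alpha}$ to get $\vertiii{\mathbf{f}({\rm I}_h\Psi)-\mathbf{f}(\Psi)}_{0}\lesssim\epsilon^{-2}h^{1+\alpha}\vertiii{\Psi}_{1+\alpha}(1+\vertiii{\Psi}_{1+\alpha}^{2})$; inserting these into the element and edge bounds and absorbing the nonlinear contribution (higher order in $h$, since $4+2\alpha\ge4\alpha$ and $h\lesssim1$) into $h^{2\alpha}\bigl(\epsilon^{-2}h^{\alpha}(1+\vertiii{\Psi}_{1+\alpha}^{2})\bigr)^{2}\vertiii{\Psi}_{1+\alpha}^{2}$ gives $(ii)$. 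The step I expect to be the main obstacle is the nonlinear bookkeeping in part $(i)$: extracting the factor $\Psi-\Phi_h$ from the cubic residual difference and producing a bound with the correct power of $\epsilon^{-2}$ and the $H^{1}$-norms of $\Psi$ and $\Phi_h$ \emph{without} any $\mathbf{L}^{\infty}$ control on $\Psi$ (which lies only in $\mathbf{H}^{1}$ there), and then, in $(ii)$, recovering the additional powers of $h$ in the constant by switching to the $\mathbf{L}^{2}$ interpolation estimate and the $\mathbf{L}^{\infty}$-embedding of $\h^{1+\alpha}(\Omega)$.
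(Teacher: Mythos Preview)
Your proposal is correct and follows essentially the same bubble-function approach as the paper's proof in the Appendix: the paper tests the weak form with $\boldsymbol{\rho}_T=b_T\boldsymbol{\eta}_T$ and $\boldsymbol{\rho}_E=b_E\boldsymbol{\eta}_E$, splits the residual into the $A_T$, $B_T$, $C_T$ contributions, and uses Lemma~\ref{boundedness} and the inverse/bubble estimates of Lemma~\ref{lem 3.4} exactly as you outline, while for part $(ii)$ it invokes Lemma~\ref{boundedness}$(v)$ (which packages the $\mathbf{L}^\infty$-embedding and $\mathbf{L}^2$-interpolation you spell out) to obtain the sharper $h^{2\alpha}$ bound. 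The only cosmetic differences are that you lump $B+C$ into the single nonlinearity $\mathbf{f}$ and pass to the global $\mathbf{L}^2$-norm of $\mathbf{f}(\Phi_h)-\mathbf{f}(\Psi)$ after summing, whereas the paper keeps the three forms separate and writes the estimates with local $H^1(T)$-norms before summing; both routes yield the stated bounds.
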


\noindent {The next lemma is a local efficiency type result for \eqref{2.4.2.1} that helps to prove the discrete inf-sup condition for a linear problem in the next section.}
\begin{lem} (Local efficiency II) \label{local efficiency for discrete infsup}
	Let $\boldsymbol\xi$ be the solution of \eqref{2.4.2.1} with  interpolant ${\rm{I}}_h \boldsymbol\xi \!\in\! \V_h:= \X_h\cap \h^1_0(\Omega)$.  If the exact solution $\Psi \!\in\! \boldsymbol{\mathcal{{X}}}\cap \h^{1+\alpha}(\Omega)$, $0 < \alpha \le 1$, then
	\begin{align*} 
	\sum_{ T \in \mathcal{T}}	h_T^2 \vertiii{\boldsymbol{ \eta}_T}_{0,T}^2 + \sum_{ E \in \mathcal{E}_h^{i}} h_E  \vertiii{\boldsymbol{ \eta}_E}_{0,E}^2 
	\lesssim \epsilon^{-4}h^{2\alpha}(1+\vertiii{\Psi}_{1+\alpha}^2)^2,
	\end{align*}	
	where $\boldsymbol{ \eta}_T:=(2 \epsilon^{-2}(\abs{{\rm{I}}_h \Psi}^2  \Theta_{h} + 2 ({\rm{I}}_h \Psi \cdot  \Theta_{h}) {\rm{I}}_h \Psi -\Theta_{h}))|_T$ is defined on a triangle $T \!\in\! \mathcal{T}$, $ {\rm{I}}_h \Psi\!\in\! \X_h$ is the interpolant of $\Psi$ and $\boldsymbol{ \eta}_E= [\nabla({\rm{I}}_h\boldsymbol{\xi}) \nu_E]_E$ on the edge $E$ of $T$ and $\Theta_h \in \X_h$.
\end{lem}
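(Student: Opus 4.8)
The plan is to run the classical bubble--function (local efficiency) argument, but arranged so that the possibly low regularity $\boldsymbol\xi\in\h^{1+\alpha}(\Omega)$ (with $\alpha\le\tfrac12$) is never used to make sense of $\mathbf L^2$--traces of $\nabla\boldsymbol\xi$ on edges. First, rewrite the right--hand side of \eqref{2.4.2.1} as $\int_\Omega\mathbf f\cdot\Phi\dx$ with $\mathbf f:=2\epsilon^{-2}\big(\abs{\Psi}^2\Theta_h+2(\Psi\cdot\Theta_h)\Psi-\Theta_h\big)$, so that \eqref{2.4.2.1} reads $-\Delta\boldsymbol\xi=\mathbf f$ in the sense of distributions; since $\Psi\in\h^{1+\alpha}(\Omega)\hookrightarrow\mathbf L^\infty(\Omega)$ and $\vertiiih{\Theta_h}=1$ (hence $\vertiii{\Theta_h}_0\lesssim1$ by Lemma \ref{Poincare type inequality}), this gives $\mathbf f\in\mathbf L^2(\Omega)$ with $\vertiii{\mathbf f}_0\lesssim\epsilon^{-2}(1+\vertiii{\Psi}_{1+\alpha}^2)$. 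Note that $\boldsymbol\eta_T$ is precisely $\mathbf f|_T$ with $\Psi$ replaced by ${\rm I}_h\Psi$, and that $\boldsymbol\eta_T$, $\boldsymbol\eta_E$ are piecewise polynomials, so the standard bubble--function norm equivalences apply without any further polynomial projection.

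For the volume contribution, on each $T\in\mathcal T$ I would take $w_T:=b_T\,\boldsymbol\eta_T$ with $b_T$ the interior bubble, use $\vertiii{\boldsymbol\eta_T}_{0,T}^2\lesssim\int_T\boldsymbol\eta_T\cdot w_T\dx$, and exploit two identities: $\int_T\nabla({\rm I}_h\boldsymbol\xi)\cdot\nabla w_T\dx=0$ (since ${\rm I}_h\boldsymbol\xi|_T$ is affine and $w_T=0$ on $\partial T$), and $\int_T\mathbf f\cdot w_T\dx=\int_T\nabla\boldsymbol\xi\cdot\nabla w_T\dx$ (testing \eqref{2.4.2.1} with $w_T\in\V$). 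Subtracting and using the inverse estimate $\vertiii{\nabla w_T}_{0,T}\lesssim h_T^{-1}\vertiii{\boldsymbol\eta_T}_{0,T}$ yields $h_T\vertiii{\boldsymbol\eta_T}_{0,T}\lesssim\vertiii{\nabla(\boldsymbol\xi-{\rm I}_h\boldsymbol\xi)}_{0,T}+h_T\vertiii{\boldsymbol\eta_T-\mathbf f}_{0,T}$. For the interior--edge contribution, on each $E\in\mathcal E_h^i$ with patch $\omega_E=T^+\cup T^-$ I would take $w_E:=b_E\,\boldsymbol\eta_E$ (edge bubble, with the usual extension of $\boldsymbol\eta_E$ into $\omega_E$, vanishing on $\partial\omega_E$), use $\vertiii{\boldsymbol\eta_E}_{0,E}^2\lesssim\int_E\boldsymbol\eta_E\cdot w_E\ds$, and, via element--wise integration by parts together with $\Delta({\rm I}_h\boldsymbol\xi)=0$, write $\int_E\boldsymbol\eta_E\cdot w_E\ds=\sum_{T\subset\omega_E}\int_T\nabla({\rm I}_h\boldsymbol\xi)\cdot\nabla w_E\dx$. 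Crucially, since $w_E\in\V$, testing \eqref{2.4.2.1} with $w_E$ gives $\int_{\omega_E}\nabla\boldsymbol\xi\cdot\nabla w_E\dx=\int_{\omega_E}\mathbf f\cdot w_E\dx$ --- this step uses only $\boldsymbol\xi\in\h^1$ and $\mathbf f\in\mathbf L^2$ and is what lets us bypass edge traces of $\nabla\boldsymbol\xi$. Subtracting and using the bubble estimates $\vertiii{\nabla w_E}_{0,\omega_E}\lesssim h_E^{-1/2}\vertiii{\boldsymbol\eta_E}_{0,E}$, $\vertiii{w_E}_{0,\omega_E}\lesssim h_E^{1/2}\vertiii{\boldsymbol\eta_E}_{0,E}$ yields $h_E^{1/2}\vertiii{\boldsymbol\eta_E}_{0,E}\lesssim\vertiii{\nabla(\boldsymbol\xi-{\rm I}_h\boldsymbol\xi)}_{0,\omega_E}+h_E\vertiii{\mathbf f}_{0,\omega_E}$.

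Squaring, summing, and using the finite overlap of the patches, the two bounds give
\begin{align*}
\sum_{T\in\mathcal T}h_T^2\vertiii{\boldsymbol\eta_T}_{0,T}^2+\sum_{E\in\mathcal E_h^i}h_E\vertiii{\boldsymbol\eta_E}_{0,E}^2\lesssim\vertiii{\nabla(\boldsymbol\xi-{\rm I}_h\boldsymbol\xi)}_0^2+h^2\vertiii{\mathbf f}_0^2+\sum_{T\in\mathcal T}h_T^2\vertiii{\boldsymbol\eta_T-\mathbf f}_{0,T}^2.
\end{align*}
To finish, I would bound the first term by $h^\alpha\vertiii{\boldsymbol\xi}_{1+\alpha}\lesssim h^\alpha\epsilon^{-2}(1+\vertiii{\Psi}_{1+\alpha}^2)$ using Lemma \ref{Interpolation estimate} and the regularity estimate \eqref{2.4.4.0}; the second term is directly $\lesssim h^2\epsilon^{-4}(1+\vertiii{\Psi}_{1+\alpha}^2)^2$; and for the data--oscillation term I would expand $\boldsymbol\eta_T-\mathbf f$ in powers of $\Psi-{\rm I}_h\Psi$ and estimate each trilinear product by H\"older ($\mathbf L^4\times\mathbf L^4\times\mathbf L^\infty$), invoking $\vertiii{\Psi-{\rm I}_h\Psi}_{L^4(\Omega)}\lesssim h^\alpha\vertiii{\Psi}_{1+\alpha}$, $\vertiii{\Theta_h}_{L^4(\Omega)}\lesssim1$, and $\vertiii{{\rm I}_h\Psi}_{L^\infty(\Omega)}+\vertiii{\Psi}_{L^\infty(\Omega)}\lesssim\vertiii{\Psi}_{1+\alpha}$, which makes it $\lesssim h^{2+2\alpha}\epsilon^{-4}\vertiii{\Psi}_{1+\alpha}^4$. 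Since $h\le1$ and $0<\alpha\le1$, all three terms are dominated by $\epsilon^{-4}h^{2\alpha}(1+\vertiii{\Psi}_{1+\alpha}^2)^2$, which is the assertion. The step I expect to be the main obstacle --- and the only place where the argument genuinely differs from the $\h^2$ case --- is the interior--edge term when $\alpha\le\tfrac12$, where $\nabla\boldsymbol\xi\,\nu_E\notin\mathbf L^2(E)$; the remedy above is to never integrate by parts onto $E$ but to test the weak form \eqref{2.4.2.1} on the patch $\omega_E$ with $w_E\in H^1_0(\omega_E)$, exactly the medius--analysis device used elsewhere in the paper.
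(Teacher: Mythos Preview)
Your proof is correct and follows essentially the same bubble--function approach as the paper: the paper indicates that Lemma~\ref{local efficiency for discrete infsup} is proved exactly as Lemma~\ref{lem 3.1} with $\boldsymbol\rho_T=(\Delta({\rm I}_h\boldsymbol\xi)+\boldsymbol\eta_T)b_T=\boldsymbol\eta_Tb_T$ and $\boldsymbol\rho_E=[\nabla({\rm I}_h\boldsymbol\xi)\nu]b_E$, and then tests the weak form \eqref{2.4.2.1} with these $H^1_0$ bubbles, which is precisely your $w_T$, $w_E$ strategy. The only cosmetic difference is that you package the right-hand side as a single source $\mathbf f$ and treat $\boldsymbol\eta_T-\mathbf f$ as a data--oscillation term, whereas the paper would write the same quantities as $A_T({\rm I}_h\boldsymbol\xi-\boldsymbol\xi,\boldsymbol\rho)$ and $3B_T({\rm I}_h\Psi,{\rm I}_h\Psi,\Theta_h,\boldsymbol\rho)-3B_T(\Psi,\Psi,\Theta_h,\boldsymbol\rho)$ following the template of \eqref{lem 4.4.1}--\eqref{lem 4.4.11}.
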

\noindent For $G \!\in\! \mathbf{L}^2(\Omega)$, the well-posed dual problem admits a unique   $\boldsymbol{\chi} \!\in\! \V$  \cite{DGFEM} such that
\begin{align} \label{dual problem}
\dual{DN(\Psi)\Phi, \boldsymbol{\chi}}= (G, \Phi) \,\,\quad \text{for all } \Phi \in \V,
\end{align}
that satisfies
\begin{align}\label{xi regularity bound}
\quad \vertiii{\boldsymbol{\chi}}_{1+\alpha}\lesssim (1+ \epsilon^{-2}(1+ \vertiii{\Psi}_{1+\alpha}^2)) \vertiii{G}_0,
\end{align}
where $0 < \alpha \le 1$ denotes the index of elliptic regularity.

\medskip
\noindent A local efficiency type result for \eqref{dual problem} is needed to establish  $\mathbf{L}^2 $- norm error estimates and is stated below.
\begin{lem}(Local efficiency III) \label{local efficiency for L2 estimate}
 Let $\Psi $ be a regular solution of \eqref{continuous nonlinear} and ${\rm I}_h\Psi \!\in\! \X_h$ be its interpolant.	For a given $G \!\in\! \mathbf{L}^2(\Omega)$, let $\boldsymbol{\chi}$ solve \eqref{dual problem} and let its interpolant be  ${\rm{I}}_{h}\boldsymbol{\chi} \!\in\! \V_h$. Then, the following result holds. 
	\begin{align*}
	\sum_{ T \in \mathcal{T}}	h_T^2 \vertiii{\boldsymbol{ \eta}_T}_{0,T}^2 + \sum_{ E \in \mathcal{E}_h^{i}} h_E  \vertiii{\boldsymbol{ \eta}_E}_{0,E}^2 
	\lesssim h^{2\alpha}(1+ \epsilon^{-2}(1+\vertiii{\Psi}^2_{1+\alpha}))^4 \vertiii{G}_0^2+ (Osc(G))^2,
	\end{align*}
	where $\boldsymbol{ \eta}_T :=( G  -2\epsilon^{-2} (\abs{{\rm{I}}_{h}\Psi}^2 {\rm{I}}_{h}\boldsymbol{\chi} +2 ({\rm I}_h\Psi \cdot {\rm I}_h{ \boldsymbol{\chi}}){\rm{I}}_{h}\Psi- {\rm{I}}_{h}\boldsymbol{\chi}))|_T$ is defined on a triangle $T \in \mathcal{T} $, $\boldsymbol{ \boldsymbol{ \eta}}_E:=[ \nabla({\rm{I}}_{h}\boldsymbol{\chi}) \nu_E ]_E $ on edge $E$ of $T$ and $\displaystyle 	Osc(G)=\big(\sum_{ T \in \mathcal{T}}h_T^2 (\inf_{G_h \in P_1(T)} \vertiii{G- G_h}_{0,T}^2)\big)^{\frac{1}{2}}.$
\end{lem}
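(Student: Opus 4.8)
The plan is a bubble-function local-efficiency argument of the type in \cite{Verfurth}, carried out for the dual problem \eqref{dual problem}; the one structural novelty is that, since $\boldsymbol{\chi}$ is only $\h^{1+\alpha}$-regular, $\nabla\boldsymbol{\chi}$ has no $\mathbf{L}^2$-trace across interior edges, so every manipulation must rest on the \emph{weak} identity \eqref{dual problem} rather than on element-wise integration by parts against $\Delta\boldsymbol{\chi}$. First I would expand $A$, $B$, $C$ in \eqref{dual problem} and use that $B(\Psi,\Psi,\cdot,\cdot)$ is symmetric to record that $\boldsymbol{\chi}$ solves, weakly, $-\Delta\boldsymbol{\chi}+2\epsilon^{-2}f(\Psi,\boldsymbol{\chi})=G$ in $\Omega$, $\boldsymbol{\chi}=0$ on $\partial\Omega$, where $f(\Phi,\Lambda):=\abs{\Phi}^2\Lambda+2(\Phi\cdot\Lambda)\Phi-\Lambda$ is linear in $\Lambda$. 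Because $\Delta({\rm I}_h\boldsymbol{\chi})=0$ on each $T$, the quantity $\boldsymbol{\eta}_T$ is exactly the element residual of ${\rm I}_h\boldsymbol{\chi}$ and $\boldsymbol{\eta}_E$ its interior normal-flux jump, so the claim is a standard efficiency bound for these residuals with respect to the consistency error $\vertiii{\nabla(\boldsymbol{\chi}-{\rm I}_h\boldsymbol{\chi})}_0$, the nonlinear perturbation, and the data oscillation.

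\textbf{Element and edge estimates.} Fix $T\in\mathcal{T}$, let $G_h\in P_1(T)$ attain $\inf_{G_h}\vertiii{G-G_h}_{0,T}$, set $\widetilde{\boldsymbol{\eta}}_T:=(G_h-2\epsilon^{-2}f({\rm I}_h\Psi,{\rm I}_h\boldsymbol{\chi}))|_T$, a polynomial with $\boldsymbol{\eta}_T-\widetilde{\boldsymbol{\eta}}_T=G-G_h$, and take $v_T:=b_T\widetilde{\boldsymbol{\eta}}_T\in\V$ with $b_T$ the interior bubble. The polynomial norm equivalence gives $\vertiii{\widetilde{\boldsymbol{\eta}}_T}_{0,T}^2\lesssim\int_T\boldsymbol{\eta}_T\cdot v_T\dx+\int_T(G_h-G)\cdot v_T\dx$; replacing $\int_T G\cdot v_T$ by $A(v_T,\boldsymbol{\chi})+2\epsilon^{-2}\int_T f(\Psi,\boldsymbol{\chi})\cdot v_T$ via \eqref{dual problem} with $\Phi=v_T$, and noting $A(v_T,\boldsymbol{\chi})=\int_T\nabla v_T:\nabla(\boldsymbol{\chi}-{\rm I}_h\boldsymbol{\chi})\dx$ (since $\int_T\nabla v_T\dx=0$ and $\nabla({\rm I}_h\boldsymbol{\chi})|_T$ is constant), the inverse inequalities $\vertiii{v_T}_{0,T}\lesssim\vertiii{\widetilde{\boldsymbol{\eta}}_T}_{0,T}$ and $\vertiii{\nabla v_T}_{0,T}\lesssim h_T^{-1}\vertiii{\widetilde{\boldsymbol{\eta}}_T}_{0,T}$ yield
\[
h_T\vertiii{\boldsymbol{\eta}_T}_{0,T}\lesssim\vertiii{\nabla(\boldsymbol{\chi}-{\rm I}_h\boldsymbol{\chi})}_{0,T}+h_T\epsilon^{-2}\vertiii{f(\Psi,\boldsymbol{\chi})-f({\rm I}_h\Psi,{\rm I}_h\boldsymbol{\chi})}_{0,T}+h_T\vertiii{G-G_h}_{0,T}.
\]
For $E\in\mathcal{E}_h^{i}$ with patch $\omega_E=T^+\cup T^-$, take $v_E:=b_E\boldsymbol{\eta}_E\in\h^1_0(\omega_E)\subset\V$ with $b_E$ the edge bubble ($\boldsymbol{\eta}_E$ being a constant vector); element-wise integration by parts, now legitimate as ${\rm I}_h\boldsymbol{\chi}$ is affine on each $T$, gives $\int_E\boldsymbol{\eta}_E\cdot v_E\ds=\int_{\omega_E}\nabla({\rm I}_h\boldsymbol{\chi}):\nabla v_E\dx$. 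Writing $\nabla({\rm I}_h\boldsymbol{\chi})=\nabla\boldsymbol{\chi}-\nabla(\boldsymbol{\chi}-{\rm I}_h\boldsymbol{\chi})$, invoking \eqref{dual problem} with $\Phi=v_E$, and using $G-2\epsilon^{-2}f(\Psi,\boldsymbol{\chi})=\boldsymbol{\eta}_T+2\epsilon^{-2}(f({\rm I}_h\Psi,{\rm I}_h\boldsymbol{\chi})-f(\Psi,\boldsymbol{\chi}))$ on each $T\subset\omega_E$, together with $\vertiii{\boldsymbol{\eta}_E}_{0,E}^2\lesssim\int_E\boldsymbol{\eta}_E\cdot v_E\ds$ and the edge-bubble inverse estimates, gives
\[
h_E\vertiii{\boldsymbol{\eta}_E}_{0,E}^2\lesssim\sum_{T\subset\omega_E}\big(h_T^2\vertiii{\boldsymbol{\eta}_T}_{0,T}^2+\vertiii{\nabla(\boldsymbol{\chi}-{\rm I}_h\boldsymbol{\chi})}_{0,T}^2+h_T^2\epsilon^{-4}\vertiii{f(\Psi,\boldsymbol{\chi})-f({\rm I}_h\Psi,{\rm I}_h\boldsymbol{\chi})}_{0,T}^2\big).
\]

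\textbf{Assembly.} Squaring and summing the element estimate over $T$, then summing the edge estimate over $E\in\mathcal{E}_h^{i}$ and using finite overlap of the patches $\omega_E$, the left-hand side of the claim is $\lesssim\vertiii{\nabla(\boldsymbol{\chi}-{\rm I}_h\boldsymbol{\chi})}_0^2+\epsilon^{-4}\sum_T h_T^2\vertiii{f(\Psi,\boldsymbol{\chi})-f({\rm I}_h\Psi,{\rm I}_h\boldsymbol{\chi})}_{0,T}^2+(Osc(G))^2$. By Lemma \ref{Interpolation estimate} and \eqref{xi regularity bound}, $\vertiii{\nabla(\boldsymbol{\chi}-{\rm I}_h\boldsymbol{\chi})}_0\lesssim h^{\alpha}\vertiii{\boldsymbol{\chi}}_{1+\alpha}\lesssim h^{\alpha}(1+\epsilon^{-2}(1+\vertiii{\Psi}_{1+\alpha}^2))\vertiii{G}_0$, which already produces an $h^{2\alpha}(1+\epsilon^{-2}(1+\vertiii{\Psi}_{1+\alpha}^2))^2\vertiii{G}_0^2$ term. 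For the nonlinear term, the linearity of $f$ in its second slot together with $\h^{1+\alpha}(\Omega)\hookrightarrow\mathbf{L}^\infty(\Omega)$ and the $\mathbf{L}^\infty$-stability of ${\rm I}_h$ give $\vertiii{f(\Psi,\boldsymbol{\chi})-f({\rm I}_h\Psi,{\rm I}_h\boldsymbol{\chi})}_{0,T}\lesssim(1+\vertiii{\Psi}_{1+\alpha}^2)\vertiii{\boldsymbol{\chi}-{\rm I}_h\boldsymbol{\chi}}_{0,T}+\vertiii{\Psi}_{1+\alpha}\vertiii{\boldsymbol{\chi}}_{1+\alpha}\vertiii{\Psi-{\rm I}_h\Psi}_{0,T}$; hence, by Lemma \ref{Interpolation estimate} (used globally through $\sum_T h_T^2\vertiii{w-{\rm I}_h w}_{0,T}^2\le h^2\vertiii{w-{\rm I}_h w}_0^2$), this contribution is $O(h^{4+2\alpha})$, and, after absorbing $\epsilon^{-4}(1+\vertiii{\Psi}_{1+\alpha}^2)^2\le(1+\epsilon^{-2}(1+\vertiii{\Psi}_{1+\alpha}^2))^2$ and applying \eqref{xi regularity bound} again, it is bounded by $h^{2\alpha}(1+\epsilon^{-2}(1+\vertiii{\Psi}_{1+\alpha}^2))^4\vertiii{G}_0^2$. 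Adding the three pieces gives the stated bound.

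\textbf{Main obstacle.} The crux is the reduced regularity $\alpha<1$, which forbids pulling $\Delta\boldsymbol{\chi}$ out of $A(\cdot,\boldsymbol{\chi})$ elementwise: one must instead test the weak dual identity with bubble functions and absorb the resulting consistency term $\vertiii{\nabla(\boldsymbol{\chi}-{\rm I}_h\boldsymbol{\chi})}_0$ through the $h^{\alpha}$ interpolation estimate. The secondary difficulty is the careful bookkeeping of $\epsilon$- and $\vertiii{\Psi}_{1+\alpha}$-powers, so that the cubic nonlinearity — which contributes a factor $\epsilon^{-4}$ and two factors of $\vertiii{\boldsymbol{\chi}}_{1+\alpha}\lesssim(1+\epsilon^{-2}(1+\vertiii{\Psi}_{1+\alpha}^2))\vertiii{G}_0$ — produces exactly the fourth-power constant appearing in the conclusion.
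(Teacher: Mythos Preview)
Your proposal is correct and follows essentially the same bubble-function approach as the paper: the paper's appendix states that the proof is analogous to Lemma~\ref{lem 3.1} with the test functions $\boldsymbol{\rho}_T=(G_h+\Delta({\rm I}_h\boldsymbol{\chi})-2\epsilon^{-2}(\abs{{\rm I}_h\Psi}^2{\rm I}_h\boldsymbol{\chi}+2({\rm I}_h\Psi\cdot{\rm I}_h\boldsymbol{\chi}){\rm I}_h\Psi-{\rm I}_h\boldsymbol{\chi}))b_T$ and $\boldsymbol{\rho}_E=[\nabla({\rm I}_h\boldsymbol{\chi})\nu]b_E$, which (since $\Delta({\rm I}_h\boldsymbol{\chi})=0$) are exactly your $v_T=b_T\widetilde{\boldsymbol{\eta}}_T$ and $v_E=b_E\boldsymbol{\eta}_E$. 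Your write-up is in fact more detailed than the paper's sketch, particularly in the bookkeeping of the $\epsilon$- and $\vertiii{\Psi}_{1+\alpha}$-dependent constants that produce the fourth-power factor.
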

\begin{rem}
In this article, we consider the case when exact solution belongs to $\boldsymbol{\mathcal{{X}}} \cap \h^{1+\alpha}(\Omega), \; 0< \alpha \le 1.$ Hence globally continuous piece-wise affine polynomials in $X_h$ lead to optimal order estimates. However, if the solution belongs to $\h^s(\Omega )$ for $\frac{3}{2} < s \leq p+1, $ $p \in \mathbb{N},$ then choose $X_h= \{v_h \in {C}^0(\overline{\Omega}), v_h|_T \in P_p(T), \text{ for all } T \in \mathcal{T}\}$ \cite{Stenberg2008}. In this case, the local efficiency terms $\boldsymbol{ \eta}_T:=-\Delta \Phi_h+ (2\epsilon^{-2}(\abs{\Phi_{h}}^2 -1)\Phi_{h})|_T$ in Lemma \ref{local efficiency for ball to ball} and $\boldsymbol{\eta}_T$ will include $\Delta (\textrm{I}_h \boldsymbol{\xi})$ (resp. $\Delta (\textrm{I}_h \boldsymbol{\chi})$)  in Lemma \ref{local efficiency for discrete infsup} (resp. Lemma \ref{local efficiency for L2 estimate}).
\end{rem}
\subsection{Proof of  \textbf{\textit a priori} estimates}\label{Proof of main results}
This subsection focuses on the proof of {\it a priori} error estimates in Theorem \ref{energy and $L^2$ norm error estimate}. The key idea is to establish a discrete inf-sup condition that corresponds to a perturbed bilinear form  defined for all $\Theta_{h}, \Phi_{h} \!\in\! \X_h$ as
	\begin{align}\label{perturbed bilinear form}
	\dual{DN_h(\textrm{I}_h\Psi)\Theta_{h}, \Phi_{h}} := A_{h}(\Theta_{h},\Phi_{h})+3B(\textrm{I}_h\Psi,\textrm{I}_h\Psi,\Theta_{h},\Phi_{h})+C(\Theta_{h},\Phi_{h}).
	\end{align} 	
\noindent	 in Theorem \ref{discrete inf sup} when the exact solution $\Psi$ of \eqref{continuous nonlinear} belongs to $\boldsymbol{\mathcal{{X}}} \cap \h^{1+\alpha}(\Omega)$ with $0< \alpha \le 1$.  The  proofs in \cite[Theorem 4.7, Lemma 4.8]{DGFEM} assume that the exact solution belongs to $ \boldsymbol{\mathcal{{X}}} \cap \h^2(\Omega)$. The non-trivial modification of the proof techniques appeal to a clever re-grouping of the terms that  involve the boundary terms  and an application of Lemma \ref{local efficiency for discrete infsup}. Moreover,  in \cite[Lemma 4.8]{DGFEM}, the stability of the perturbed bilinear form $\dual{DN_h({{\rm I}_{h}\Psi})\cdot,\cdot }$ is established  by first proving the stability of $\dual{DN_h({\Psi})\cdot,\cdot }$ (see  \cite[Theorem 4.8]{DGFEM}).   In this article,  we provide an {\it alternate simplified proof} that {\it directly} establishes the stability  of the perturbed bilinear form using  Lemma \ref{Lemma for discrete infsup}.
	\begin{lem}\label{Lemma for discrete infsup}
	Let $\Psi $ be a regular solution of \eqref{continuous nonlinear} and ${\rm I}_{h}\Psi$ be its interpolant. For $\Theta_{h} \!\in\! \X_h$ with $\vertiiih{\Theta_{h}}=1$, and the interpolant ${\rm{I}}_h \boldsymbol \xi \in \V_{h}$ of the solution $\boldsymbol \xi$ of \eqref{2.4.2.1}, it holds that
		\begin{align*}
		\vertiiih{\Theta_{h} + {\rm{I}}_h\boldsymbol \xi}\lesssim  \dual{DN_h({\rm I}_{h}\Psi)\Theta_{h},\Phi_{h} }+   \epsilon^{-2}h^\alpha (1+\vertiii{\Psi}^2_{1+\alpha}).
		\end{align*}
	\end{lem}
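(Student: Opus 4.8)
The plan is to test the perturbed bilinear form $\dual{DN_h({\rm I}_h\Psi)\cdot,\cdot}$ against a cleverly chosen discrete function $\Phi_h$ and extract the norm $\vertiiih{\Theta_h + {\rm I}_h\boldsymbol\xi}$, where $\boldsymbol\xi\in\mathbf{H}^{1+\alpha}(\Omega)\cap\V$ solves the linearized continuous system \eqref{2.4.2.1}. First I would record the defining identity of $\boldsymbol\xi$, namely $A(\boldsymbol\xi,\Phi)=3B(\Psi,\Psi,\Theta_h,\Phi)+C(\Theta_h,\Phi)$ for all $\Phi\in\V$, and combine it with the definition \eqref{perturbed bilinear form} to rewrite
\begin{align*}
\dual{DN_h({\rm I}_h\Psi)\Theta_h,\Phi_h} &= A_h(\Theta_h,\Phi_h)+3B({\rm I}_h\Psi,{\rm I}_h\Psi,\Theta_h,\Phi_h)+C(\Theta_h,\Phi_h).
\end{align*}
The natural test function is $\Phi_h:=\Theta_h+{\rm I}_h\boldsymbol\xi$ (or its enrichment), and the target is to show that evaluating the form on $\Theta_h$ against this $\Phi_h$ reproduces $\vertiiih{\Theta_h+{\rm I}_h\boldsymbol\xi}^2$ up to the error term $\epsilon^{-2}h^\alpha(1+\vertiii{\Psi}_{1+\alpha}^2)$ and a controllable defect.

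The key steps, in order, would be: (1) Add and subtract $A_h({\rm I}_h\boldsymbol\xi,\Phi_h)$ and $3B(\Psi,\Psi,\Theta_h,\Phi_h)$, $C(\Theta_h,\Phi_h)$ to assemble the coercive pairing $A_h(\Theta_h+{\rm I}_h\boldsymbol\xi,\Phi_h)$, which by Lemma \ref{boundedness}(ii) with $\Phi_h=\Theta_h+{\rm I}_h\boldsymbol\xi$ is bounded below by $C_{\rm ell}\vertiiih{\Theta_h+{\rm I}_h\boldsymbol\xi}^2$. (2) Control the consistency defect $A_h({\rm I}_h\boldsymbol\xi,\Phi_h)-A(\boldsymbol\xi,\Phi_h)$: here the Nitsche boundary terms in $a_h$ must be handled. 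Since ${\rm I}_h\boldsymbol\xi\in\V_h\subset H^1_0(\Omega)$ its trace vanishes, killing the penalty term and the symmetrization term $\dual{{\rm I}_h\boldsymbol\xi,\partial\Phi_h/\partial\nu}_{\partial\Omega}$; the remaining term $\dual{\partial({\rm I}_h\boldsymbol\xi)/\partial\nu,\Phi_h}_{\partial\Omega}$ vanishes on the subspace where $\Phi_h$ restricts to boundary data handled via the structure. What survives is $\int_\Omega\nabla({\rm I}_h\boldsymbol\xi-\boldsymbol\xi)\cdot\nabla\Phi_h\dx$, bounded by $\vertiii{\nabla({\rm I}_h\boldsymbol\xi-\boldsymbol\xi)}_0\vertiiih{\Phi_h}\lesssim C_I h^\alpha\vertiii{\boldsymbol\xi}_{1+\alpha}\vertiiih{\Phi_h}$ using Lemma \ref{Interpolation estimate}, and then $\vertiii{\boldsymbol\xi}_{1+\alpha}\lesssim\epsilon^{-2}(1+\vertiii{\Psi}_{1+\alpha}^2)$ by \eqref{2.4.4.0}. (3) Control the cubic-term perturbation $3B({\rm I}_h\Psi,{\rm I}_h\Psi,\Theta_h,\Phi_h)-3B(\Psi,\Psi,\Theta_h,\Phi_h)$ via the last estimate of Lemma \ref{boundedness}(v), giving a bound $\lesssim\epsilon^{-2}h^\alpha\vertiii{\Psi}_{1+\alpha}^2\vertiiih{\Theta_h}\vertiiih{\Phi_h}=\epsilon^{-2}h^\alpha\vertiii{\Psi}_{1+\alpha}^2\vertiiih{\Phi_h}$. (4) Divide through by $\vertiiih{\Phi_h}=\vertiiih{\Theta_h+{\rm I}_h\boldsymbol\xi}$ and collect: $C_{\rm ell}\vertiiih{\Theta_h+{\rm I}_h\boldsymbol\xi}\lesssim\dual{DN_h({\rm I}_h\Psi)\Theta_h,\Phi_h}/\vertiiih{\Phi_h}+\epsilon^{-2}h^\alpha(1+\vertiii{\Psi}_{1+\alpha}^2)$, and absorb $C_{\rm ell}$ into the hidden constant. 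A subtlety is the degenerate case $\vertiiih{\Theta_h+{\rm I}_h\boldsymbol\xi}=0$, which is trivial, and the case where this norm is small, where the estimate still holds since the right-hand side is nonnegative.

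The main obstacle I anticipate is step (2): carefully accounting for the Nitsche boundary integrals so that the pairing $A_h(\Theta_h+{\rm I}_h\boldsymbol\xi,\Phi_h)$ is legitimately coercive on $\X_h$ and matches $A(\boldsymbol\xi,\cdot)$ on $\V$ up to interpolation error. Because $\boldsymbol\xi$ only lies in $\mathbf{H}^{1+\alpha}$ with $\alpha<1$, its normal derivative need not be in $\mathbf{L}^2$ of the boundary edges, so one cannot naively split $a_h({\rm I}_h\boldsymbol\xi,\Phi_h)$ into pieces involving $\partial\boldsymbol\xi/\partial\nu$; the argument must stay at the discrete level, using that ${\rm I}_h\boldsymbol\xi$ is a $P_1$ function (so $\partial({\rm I}_h\boldsymbol\xi)/\partial\nu_E\in L^2(E)$) together with the discrete trace inequality Lemma \ref{discrete trace inequality}(ii) to control $\sum_{E\in\mathcal{E}_h^\partial}h_E\vertiii{\nabla({\rm I}_h\boldsymbol\xi)\nu_E}_{0,E}^2\lesssim\vertiii{\nabla({\rm I}_h\boldsymbol\xi)}_0^2$, and noting the penalty contribution of $\Phi_h$ against ${\rm I}_h\boldsymbol\xi$ vanishes since ${\rm I}_h\boldsymbol\xi|_{\partial\Omega}=0$. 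The remaining cross terms between $\Theta_h$ and ${\rm I}_h\boldsymbol\xi$ in the coercive pairing are then handled by the symmetry and coercivity of $A_h$ in Lemma \ref{boundedness}(ii). This is exactly the ``clever re-grouping of the terms that involve the boundary terms'' alluded to before the lemma, and replaces the two-step route (first stabilizing $\dual{DN_h(\Psi)\cdot,\cdot}$, then perturbing) of \cite{DGFEM} by a direct argument.
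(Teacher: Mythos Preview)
Your plan has a genuine gap in step~(2), and it is exactly the place where the paper's \emph{medius analysis} enters. You want to replace $3B(\Psi,\Psi,\Theta_h,\Phi_h)+C(\Theta_h,\Phi_h)$ by $A(\boldsymbol\xi,\Phi_h)$, but the defining identity \eqref{2.4.2.1} for $\boldsymbol\xi$ holds only for test functions in $\V=\mathbf{H}^1_0(\Omega)$, and your choice $\Phi_h=\Theta_h+{\rm I}_h\boldsymbol\xi$ is \emph{not} in $\V$ (since $\Theta_h\in\X_h$ has no reason to vanish on $\partial\Omega$). Consequently the Nitsche boundary term $\dual{\nabla({\rm I}_h\boldsymbol\xi)\nu,\Phi_h}_{\partial\Omega}$ does not vanish, and it is not individually of order $h^\alpha$: your sentence ``vanishes on the subspace where $\Phi_h$ restricts to boundary data handled via the structure'' is where the argument breaks.

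The paper fixes this by inserting the extension operator $\Pi_h:\X_h\to\V_h$ of Lemma~\ref{enrichment Nitsche}. One applies \eqref{2.4.2.1} with $\Phi=\Pi_h\Phi_h\in\V$, so that the splitting in \eqref{2.4.8.1} becomes legitimate, and then has to control the extra pieces living on $\Phi_h-\Pi_h\Phi_h$. After an elementwise integration by parts (using $\Delta({\rm I}_h\boldsymbol\xi)=0$ and $[\Phi_h-\Pi_h\Phi_h]_E=0$ on interior edges) the troublesome boundary term cancels and what remains are the interior-edge jumps $[\nabla({\rm I}_h\boldsymbol\xi)\nu_E]_E$ paired with $\Phi_h-\Pi_h\Phi_h$, together with the volume residuals $\boldsymbol\eta_T$. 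These are \emph{not} bounded by interpolation alone; one needs the local efficiency estimate of Lemma~\ref{local efficiency for discrete infsup} combined with the approximation property \eqref{enrichment2} of $\Pi_h$ to obtain the factor $\epsilon^{-2}h^\alpha(1+\vertiii{\Psi}_{1+\alpha}^2)$. Your proposal omits both $\Pi_h$ and Lemma~\ref{local efficiency for discrete infsup}, and without them the consistency defect cannot be closed under the mild regularity $\boldsymbol\xi\in\mathbf{H}^{1+\alpha}$, $\alpha<1$.
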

\begin{proof}
		Since $\Theta_{h} + \textrm{I}_h \boldsymbol \xi \!\in\! \X_h,$ the discrete coercivity condition in Lemma \ref{boundedness}$(ii)$ implies that there exists $ \Phi_{h}\!\in\! \X_h$ with $\vertiiih{\Phi_{h} }=1$ such that $\displaystyle 	\vertiiih{\Theta_{h} + \textrm{I}_h\boldsymbol \xi} \lesssim A_h(\Theta_{h} + \textrm{I}_h\boldsymbol \xi,\Phi_{h}). $
	This inequality with \eqref{2.4.2.1}, \eqref{perturbed bilinear form} and a regrouping of terms  yields
	\begin{align}\label{2.4.8.1}
	\vertiiih{\Theta_{h} + \textrm{I}_h\boldsymbol \xi} &\lesssim \dual{DN_h({\rm I}_{h}\Psi)\Theta_{h},\Phi_{h} } + 
	(A_h(\textrm{I}_h\boldsymbol \xi,\Phi_{h})-A(\boldsymbol \xi,\Pi_h\Phi_{h}))+
	(3B({\rm I}_{h}\Psi,{\rm I}_{h}\Psi,\Theta_{h},\Pi_h\Phi_{h}-\Phi_{h}) \nonumber \\
	& \quad + C(\Theta_{h},\Pi_h\Phi_{h}-\Phi_{h})) +
	3(B(\Psi,\Psi,\Theta_{h},\Pi_h\Phi_{h}) 
	- B({\rm I}_{h}\Psi,{\rm I}_{h}\Psi,\Theta_{h},\Pi_h\Phi_{h}) ).		
	\end{align}
	The definition of $A_h(\cdot,\cdot)$ and $\textrm{I}_h\boldsymbol \xi =0$ on $\partial\Omega$  lead to
	\begin{align} \label{first_term}
	A_h(\textrm{I}_h\boldsymbol \xi,\Phi_{h})-A(\boldsymbol \xi,\Pi_h\Phi_{h})
	& = (A(\textrm{I}_h\boldsymbol \xi,\Phi_{h}-\Pi_h\Phi_{h})- \dual{\nabla (\textrm{I}_h\boldsymbol \xi) \nu, \Phi_{h}}_{\partial \Omega}) +
	A(\textrm{I}_h\boldsymbol \xi-\boldsymbol \xi,\Pi_h\Phi_{h}) .
	\end{align}
	\noindent  An  integration by parts element-wise, and $\Delta (\textrm{I}_h\boldsymbol \xi)=0$ in $T$,  $\Pi_h\Phi_{h} =0 $ on $\partial \Omega$, and $ [\Phi_{h}-\Pi_h \Phi_{h} ]_E =0 $ for all $E\in \mathcal{E}^i_h$  lead to an estimate for the first term in the right-hand side of \eqref{first_term} as
	\begin{align*}
	A(\textrm{I}_h\boldsymbol \xi,\Phi_{h}-\Pi_h\Phi_{h})- \dual{\nabla(\textrm{I}_h\boldsymbol \xi) \nu, \Phi_{h}-\Pi_h\Phi_{h}}_{\partial \Omega}
	= \sum_{E \in \mathcal{E}_h^i} \dual{[\nabla(\textrm{I}_h\boldsymbol \xi) \nu_E ]_E, \Phi_{h}-\Pi_h \Phi_{h} }_{E}.
	\end{align*}
	Note that the above term can be combined with the third term on the right-hand side of \eqref{2.4.8.1} to rewrite the expression  with the help of local term $\boldsymbol{ \eta}_T=(2 \epsilon^{-2}(\abs{{\rm{I}}_h \Psi}^2  \Theta_{h} + 2 ({\rm{I}}_h \Psi \cdot  \Theta_{h}) {\rm{I}}_h \Psi -\Theta_{h}))|_T$ on a triangle $T$ and $\boldsymbol{ \eta}_E= [\nabla({\rm{I}}_h\boldsymbol{\xi}) \nu_E]_E$ on the edge $E$ can be rewritten as 
	\begin{align*}
	& A(\textrm{I}_h\boldsymbol \xi,\Phi_{h}-\Pi_h\Phi_{h})- \dual{\nabla(\textrm{I}_h\boldsymbol \xi) \nu, \Phi_{h}-\Pi_h\Phi_{h}}_{\partial \Omega}+ (3B({\rm I}_{h}\Psi,{\rm I}_{h}\Psi,\Theta_{h},\Pi_h\Phi_{h}-\Phi_{h}) +C(\Theta_{h},\Pi_h\Phi_{h}-\Phi_{h})) \notag	
	\\	& = -\sum_{T \in \mathcal{T}}\int_T \boldsymbol{ \eta}_T \cdot  (\Phi_{h}-\Pi_h\Phi_{h})\dx+\sum_{E \in \mathcal{E}^i_h} \dual{\boldsymbol{ \eta}_E, \Phi_{h}-\Pi_h \Phi_{h} }_{E}.
	\end{align*}
	A  Cauchy-Schwarz inequality, 
	Lemma \ref{local efficiency for discrete infsup} and the inequality \eqref{enrichment2} applied to the right-hand side of the last equality yield
	\begin{align}\label{Term4}
	\sum_{T \in \mathcal{T}}\int_T \boldsymbol{ \eta}_T \cdot  (\Pi_h\Phi_{h}-\Phi_{h})\dx+\sum_{E \in \mathcal{E}^i_h} \dual{\boldsymbol{ \eta}_E, \Phi_{h}-\Pi_h \Phi_{h} }_{E} 
	&	\lesssim \vertiiih{\Phi_{h}} (\sum_{T \in \mathcal{T}} h_T^2 \vertiii{\boldsymbol{ \eta}_T}_{0,T}^2+\sum_{E \in \mathcal{E}^i_h} h_E \vertiii{\boldsymbol{ \eta}_E}_{0,E}^2)^{\frac{1}{2}}\nonumber\\&\lesssim \epsilon^{-2}h^\alpha (1+\vertiii{\Psi}^2_{1+\alpha}).
	\end{align}
	Next we proceed to estimate the terms that remain on the right-hand side of  \eqref{first_term} and \eqref{2.4.8.1}. 
	Lemma \ref{2.3.17}$(i)$, Lemma \ref{Interpolation estimate}, \eqref{enrichment1}, $\vertiiih{\Phi_{h}}=1$ and \eqref{2.4.4.0} lead to 
	\begin{align}\label{Term1}
	A(\textrm{I}_h\boldsymbol \xi-\boldsymbol \xi,\Pi_h\Phi_{h}) \lesssim \vertiii{\nabla(\textrm{I}_h\boldsymbol \xi-\boldsymbol \xi)}_0 \vertiiih{\Phi_{h}} \lesssim h^\alpha \vertiii{\boldsymbol \xi}_{1+\alpha} \vertiiih{\Phi_{h}} \lesssim \epsilon^{-2}h^\alpha (1+\vertiii{\Psi}^2_{1+\alpha}).
	\end{align}
	Lemma \ref{2.3.17}$(v)$, \eqref{enrichment1}, $\vertiiih{\Theta_{h}}=1$ and  $\vertiiih{\Phi_{h}}=1$ lead to
	\begin{align}\label{Term5}
	3(B(\Psi,\Psi,\Theta_{h},\Pi_h\Phi_{h}) - B({\rm I}_{h}\Psi,{\rm I}_{h}\Psi,\Theta_{h},\Pi_h\Phi_{h}) )\lesssim   \epsilon^{-2}h^\alpha\vertiii{\Psi}^2_{1+\alpha}.
	\end{align}
	A substitution of \eqref{Term4}- \eqref{Term5} in \eqref{2.4.8.1} concludes the proof of Lemma \ref{Lemma for discrete infsup}.
\end{proof}
\begin{thm} \label{discrete inf sup} \emph{(Stability of perturbed bilinear form).}
		Let $\Psi$ be a regular solution of
		\eqref{continuous nonlinear} and ${{\rm I}_{h}}\Psi$ be its interpolant. For a sufficiently large $\sigma$, and a sufficiently small discretization parameter $h$, there exists a constant $\beta_0 $ such that the perturbed bilinear form in \eqref{perturbed bilinear form} satisfies the following discrete inf-sup condition: 
		\begin{align*}
		0< \beta_0 \leq \inf_{\substack{\Theta_{h} \in \X_h \\  \vertiii{\Theta_{h}}_{h}=1}} \sup_{\substack{\Phi_{h} \in \X_h \\  \vertiii{\Phi_{h}}_{h}=1}}\dual{DN_h({{\rm I}_{h}\Psi})\Theta_{h},\Phi_{h} }.
		\end{align*}
	\end{thm}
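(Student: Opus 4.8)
The plan is to derive the discrete inf-sup condition directly from the pointwise lower bound of Lemma~\ref{Lemma for discrete infsup}, combined with a uniform lower bound $\vertiiih{\Theta_h+{\rm I}_h\boldsymbol\xi}\gtrsim 1$ obtained by transferring the continuous inf-sup condition \eqref{2.9} through the enrichment operator $\Pi_h$. The two quantities that must be absorbed in this transfer are the boundary mismatch $\Theta_h-\Pi_h\Theta_h$, controlled by taking the penalty parameter $\sigma$ large, and the linearised-solution mismatch ${\rm I}_h\boldsymbol\xi-\boldsymbol\eta$, controlled by taking the mesh size $h$ small.

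Fix $\Theta_h\in\X_h$ with $\vertiiih{\Theta_h}=1$ and let $\boldsymbol\xi,\boldsymbol\eta\in\h^{1+\alpha}(\Omega)\cap\V$ be the solutions of \eqref{2.4.2.1}, \eqref{2.4.3.1.1} furnished by Lemma~\ref{linear equation1 for discrete inf-sup}. Since $\Pi_h\Theta_h\in\V_h\subset\V$, the very definition \eqref{2.4.3.1.1} of $\boldsymbol\eta$ gives $\dual{DN(\Psi)\Pi_h\Theta_h,\Phi}=A(\Pi_h\Theta_h+\boldsymbol\eta,\Phi)$ for all $\Phi\in\V$. Testing the continuous inf-sup \eqref{2.9} with $\Pi_h\Theta_h$ and using the boundedness of $A(\cdot,\cdot)$ from Lemma~\ref{boundedness}$(i)$ then yields $\beta\,\vertiii{\Pi_h\Theta_h}_1\le\vertiii{\Pi_h\Theta_h+\boldsymbol\eta}_1$. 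Because both $\Pi_h\Theta_h$ and $\boldsymbol\eta$ vanish on $\partial\Omega$, the norm $\vertiiih{\cdot}$ reduces to $\vertiii{\nabla\cdot}_0$ on them, so Poincar\'e's inequality (Lemma~\ref{Poincare type inequality}) turns the previous bound into $\vertiiih{\Pi_h\Theta_h+\boldsymbol\eta}\gtrsim\beta\,\vertiiih{\Pi_h\Theta_h}$, with a constant depending only on $\alpha_0$.

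Since ${\rm I}_h\boldsymbol\xi\in\V_h$ also vanishes on $\partial\Omega$, I would split $\Theta_h+{\rm I}_h\boldsymbol\xi=(\Pi_h\Theta_h+\boldsymbol\eta)+(\Theta_h-\Pi_h\Theta_h)+({\rm I}_h\boldsymbol\xi-\boldsymbol\eta)$ and apply the reverse triangle inequality. The boundary contribution to $\vertiiih{\Theta_h}^2$ carries the factor $\sigma$, so \eqref{enrichment1} gives $\vertiiih{\Theta_h-\Pi_h\Theta_h}\le C_{e_2}\sigma^{-1/2}\vertiiih{\Theta_h}=C_{e_2}\sigma^{-1/2}$; and Remark~\ref{remark interpolation} together with the estimates \eqref{2.4.4.0} gives $\vertiiih{{\rm I}_h\boldsymbol\xi-\boldsymbol\eta}\le\vertiiih{{\rm I}_h\boldsymbol\xi-\boldsymbol\xi}+\vertiii{\nabla(\boldsymbol\xi-\boldsymbol\eta)}_0\lesssim\epsilon^{-2}h^\alpha(1+\vertiii{\Psi}_{1+\alpha}^2)$, using $h\le h^\alpha$. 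In particular $\vertiiih{\Pi_h\Theta_h}\ge 1-C_{e_2}\sigma^{-1/2}\ge\tfrac12$ when $\sigma$ is large, and collecting the three estimates,
\[
\vertiiih{\Theta_h+{\rm I}_h\boldsymbol\xi}\;\gtrsim\;\tfrac{\beta}{2}-C_{e_2}\sigma^{-1/2}-C\,\epsilon^{-2}h^\alpha(1+\vertiii{\Psi}_{1+\alpha}^2),
\]
which is bounded below by a fixed positive constant $c_0$ for $\sigma$ sufficiently large and $h$ sufficiently small (recall that $\epsilon$ and $\Psi$ are fixed).

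Finally, Lemma~\ref{Lemma for discrete infsup}, applied to this $\Theta_h$ and the corresponding $\boldsymbol\xi$, supplies a $\Phi_h\in\X_h$ with $\vertiiih{\Phi_h}=1$ satisfying $\dual{DN_h({\rm I}_h\Psi)\Theta_h,\Phi_h}\gtrsim\vertiiih{\Theta_h+{\rm I}_h\boldsymbol\xi}-\epsilon^{-2}h^\alpha(1+\vertiii{\Psi}_{1+\alpha}^2)\ge c_0-\epsilon^{-2}h^\alpha(1+\vertiii{\Psi}_{1+\alpha}^2)$, and shrinking $h$ once more makes the right-hand side at least some $\beta_0>0$ that does not depend on $\Theta_h$. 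As the supremum over $\Phi_h$ with $\vertiiih{\Phi_h}=1$ dominates this, taking the infimum over $\Theta_h$ with $\vertiiih{\Theta_h}=1$ proves the claimed discrete inf-sup condition. The main obstacle is the bookkeeping that keeps the two perturbation mechanisms independent: $\Theta_h-\Pi_h\Theta_h$ is a genuine boundary term that cannot be made small in $h$ and must be absorbed solely through $\sigma$, while ${\rm I}_h\boldsymbol\xi-\boldsymbol\eta$ must be absorbed solely through $h$; one has to check that the threshold on $\sigma$ needed for $\vertiiih{\Pi_h\Theta_h}\ge\tfrac12$ is compatible with that needed for $C_{e_2}\sigma^{-1/2}<\beta/8$, and that every constant (in particular the $\epsilon$-dependent ones entering \eqref{2.4.4.0}) is independent of $h$.
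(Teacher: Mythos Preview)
There is a genuine gap: the step ``$\vertiiih{\Theta_h-\Pi_h\Theta_h}\le C_{e_2}\sigma^{-1/2}\vertiiih{\Theta_h}$ with $C_{e_2}$ independent of $\sigma$'' is false. The constant $C_{e_2}$ in \eqref{enrichment1} is stated to be independent of $h$, not of $\sigma$; in fact it cannot be. Since $\Pi_h\Theta_h$ vanishes on $\partial\Omega$, the boundary part of $\vertiiih{\Theta_h-\Pi_h\Theta_h}^2$ is exactly $\sum_{E\in\mathcal{E}_h^\partial}\sigma h_E^{-1}\vertiii{\Theta_h}_{0,E}^2$, which is the \emph{full} boundary part of $\vertiiih{\Theta_h}^2$. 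Nothing is gained by enlarging $\sigma$. Concretely, if $\Theta_h$ is a (normalised) hat function supported at a boundary node, then $\Pi_h\Theta_h=0$, so $\vertiiih{\Pi_h\Theta_h}=0$ and your lower bound $\vertiiih{\Pi_h\Theta_h}\ge\tfrac12$ fails outright. The mismatch $\Theta_h-\Pi_h\Theta_h$ is therefore not a perturbation that can be absorbed by the penalty parameter.

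The paper does not attempt to make $\vertiiih{\Theta_h-\Pi_h\Theta_h}$ small. Instead it observes that, because $\boldsymbol\xi=0$ on $\partial\Omega$, the boundary values of $\Theta_h$ and of $\Theta_h+\boldsymbol\xi$ coincide, so the first inequality in \eqref{enrichment1} gives
\[
\vertiiih{\Theta_h-\Pi_h\Theta_h}\le C_{e_2}\Big(\sum_{E\in\mathcal{E}_h^\partial}h_E^{-1}\vertiii{\Theta_h+\boldsymbol\xi}_{0,E}^2\Big)^{1/2}\le C_{e_2}\,\vertiiih{\Theta_h+\boldsymbol\xi}\le C_{e_2}\big(\vertiiih{\Theta_h+{\rm I}_h\boldsymbol\xi}+\vertiiih{\boldsymbol\xi-{\rm I}_h\boldsymbol\xi}\big).
\]
Thus $\vertiiih{\Theta_h-\Pi_h\Theta_h}$ is bounded by the \emph{same} quantity $\vertiiih{\Theta_h+{\rm I}_h\boldsymbol\xi}$ that Lemma~\ref{Lemma for discrete infsup} already controls (plus an $O(h^\alpha)$ term), and the whole of $1=\vertiiih{\Theta_h}$ is absorbed into $\dual{DN_h({\rm I}_h\Psi)\Theta_h,\Phi_h}$ up to $O(h^\alpha)$. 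Only $h$ needs to be taken small; $\sigma$ is fixed once it is large enough for coercivity. Your decomposition separating the two ``absorption mechanisms'' therefore has to be replaced by this single route through $\vertiiih{\Theta_h+{\rm I}_h\boldsymbol\xi}$.
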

	\begin{proof} The inf-sup condition in \eqref{2.9}, \eqref{2.4.3.1.1} and Lemma \ref{boundedness}$(i)$  imply  that there exists $\Phi \!\in\! \V$ with $\vertiii{\Phi}_1 =1$ such that
		$$ \displaystyle \beta \vertiii{\Pi_h \Theta_{h}}_1\leq \dual{DN(\Psi)\Pi_h \Theta_{h},\Phi}=A(\Pi_h \Theta_{h} + \boldsymbol \eta,\Phi) \leq \vertiiih{\Pi_h \Theta_{h} + \boldsymbol \eta}. $$
		
	\noindent Recall that  $\boldsymbol \xi$  is the solution of \eqref{2.4.2.1} and ${\rm{I}}_h \boldsymbol \xi$ is its  interpolant. A triangle inequality followed by an application of the last displayed inequality yields
		\begin{align}\label{2.4.5.1}
		1=\vertiiih{\Theta_{h}} &\leq \vertiiih{ \Theta_{h}-\Pi_h \Theta_{h}} + \vertiii{\Pi_h \Theta_{h}}_1 \lesssim  \vertiiih{ \Theta_{h}-\Pi_h \Theta_{h}} + \vertiiih{\Pi_h \Theta_{h} + \boldsymbol \eta}\notag\\& \lesssim \vertiiih{ \Theta_{h}-\Pi_h \Theta_{h}} + \vertiiih{\Theta_{h} + \textrm{I}_h\boldsymbol \xi} +\vertiiih{\textrm{I}_h\boldsymbol \xi -\boldsymbol \xi}     +  \vertiii{\nabla({\boldsymbol \xi}-{\boldsymbol \eta})}_0,
		\end{align}
		where ${\boldsymbol \xi}-{\boldsymbol \eta} =0$ on $\mathcal{E}^{\partial}_h$ is used in the last term.
		Since $\boldsymbol \xi=0$ on $\mathcal{E}^{\partial}_h$, \eqref{enrichment1} and a triangle inequality yield
		\begin{align*}
		\vertiiih{ \Theta_{h}-\Pi_h \Theta_{h}}  \leq  C_{e_2}(\sum_{ E \in \mathcal{E}^{\partial}_h}h_E^{-1}\vertiii{ \Theta_{h}+\boldsymbol \xi}^2_{0, E} )^{\frac{1}{2}}\leq C_{e_2} \vertiiih{\Theta_{h}+\boldsymbol \xi} \leq C_{e_2} (\vertiiih{\Theta_{h}+\textrm{I}_h\boldsymbol \xi}+\vertiiih{\boldsymbol \xi-\textrm{I}_h\boldsymbol \xi}).
		\end{align*}
	Use this in \eqref{2.4.5.1} and apply Lemmas \ref{Interpolation estimate}, \ref{Lemma for discrete infsup} and \eqref{2.4.4.0} to obtain $\displaystyle 	1 \leq C_1  (\dual{DN_h({\rm I}_{h}\Psi)\Theta_{h},\Phi_{h} }+   \epsilon^{-2}h^\alpha (1+\vertiii{\Psi}^2_{1+\alpha})),$
		where the constant $C_1$ depends on $\alpha_0, C_S, C_I, C_{e_1} $, $C_{e_2},$ $C_{e_3}$ and is independent of $h$. 
		Therefore, for a given $\epsilon$, the discrete inf-sup condition holds with $\beta_0= \frac{1}{C_1}$ for $h < h_0:=\left(\frac{\epsilon^2}{2C_1(1+\vertiii{\Psi}^2_{1+\alpha})} \right)^{\frac{1}{\alpha}} $.
	\end{proof}	
\begin{rem} In \cite{DGFEM},  under the assumption that exact solution has $\h^2$ regularity, the discrete inf-sup condition is established for a  choice of $h=O(\epsilon^2)$. Though $h-\epsilon$ dependency is not the focus of this paper, for the case $\alpha=1$, where it is well-known\cite{Bethuel} that  $\vertiii{\Psi}_{2}$ is bounded independent of $\epsilon,$   $h-\epsilon$ dependency results can be derived analogous to \cite{DGFEM}.
\end{rem}

The proof of the energy norm error estimate in Theorem \ref{energy and $L^2$ norm error estimate} utilizes the methodology of \cite{DGFEM}.  However, Lemma \ref{Lemma for ball to ball map} establishes the estimate that requires non-trivial modifications of the techniques used in \cite{DGFEM} to prove energy norm error estimates.
\begin{lem}(An intermediate estimate)\label{Lemma for ball to ball map}
	Let $\Psi$ be a regular solution of \eqref{continuous nonlinear} and ${\rm I}_h\Psi \in \X_{h}$ be it's interpolant. Then, for any $\Phi_h \in \X_h$ with $ \vertiiih{\Phi_{h} }=1,$ it holds that
	\begin{align*}
	A_{h}({\rm{I}}_{h}\Psi , \Phi_{h} ) + B({\rm{I}}_{h}\Psi,{\rm{I}}_{h}\Psi,{\rm{I}}_{h}\Psi , \Phi_{h} )
	+ C({\rm{I}}_{h}\Psi , \Phi_{h} )  -L_{h}(\Phi_{h})\lesssim h^{\alpha} (1+ \epsilon^{-2}h^{\alpha}(1+ \vertiii{ \Psi}_{1+\alpha}^2)) \vertiii{ \Psi}_{1+\alpha}.
	\end{align*}
\end{lem}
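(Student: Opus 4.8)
The plan is to use the exact solution's weak formulation \eqref{continuous nonlinear} as a pivot: since $\Psi$ solves $N(\Psi;\Phi)=0$ for all $\Phi\in\V$, and since the discrete form $N_h$ differs from $N$ only through the Nitsche boundary modifications in $a_h$ and the load $L_h$, I would write the left-hand side of the claimed inequality as a sum of consistency-type differences. Concretely, the first step is to observe that for $\Phi_h\in\X_h$ the quantity $A_h({\rm I}_h\Psi,\Phi_h)-L_h(\Phi_h)$ can be rewritten (by the definition of $a_h$, $l_h^i$ and the fact that $\Psi=\mathbf{g}$ on $\partial\Omega$) as
\begin{align*}
A_h({\rm I}_h\Psi,\Phi_h)-L_h(\Phi_h)
&= A({\rm I}_h\Psi-\Psi,\Phi_h) + A(\Psi,\Phi_h)
 - \dual{\tfrac{\partial({\rm I}_h\Psi)}{\partial\nu},\Phi_h}_{\partial\Omega}\\
&\quad - \dual{{\rm I}_h\Psi-\mathbf{g},\tfrac{\partial\Phi_h}{\partial\nu}}_{\partial\Omega}
 + \sum_{E\in\mathcal{E}_h^\partial}\frac{\sigma}{h_E}\dual{{\rm I}_h\Psi-\mathbf{g},\Phi_h}_E,
\end{align*}
and then use $A(\Psi,\Phi_h) = -B(\Psi,\Psi,\Psi,\Phi_h)-C(\Psi,\Phi_h)$ — which holds elementwise via integration by parts since $-\Delta\Psi = 2\epsilon^{-2}(1-|\Psi|^2)\Psi$, producing interior jump terms $\sum_{E\in\mathcal{E}_h^i}\dual{[\nabla\Psi\,\nu_E]_E,\{\Phi_h\}}_E$ which vanish because $\Psi\in\h^{1+\alpha}$ with $\alpha$ possibly less than $1$ needs the regularized/medius treatment, but $\Phi_h\in\X_h$ is globally continuous so the jumps of $\Phi_h$ vanish and only $\nabla\Psi$ jumps appear weighted against a continuous function; I would keep these and bound them later, or better, avoid introducing them by working directly with $a(\Psi,\Phi_h)$ against the strong form tested with the continuous $\Phi_h\in H^1(\Omega)$, which has no interior jumps at all, so $A(\Psi,\Phi_h)=-B(\Psi,\Psi,\Psi,\Phi_h)-C(\Psi,\Phi_h)+\dual{\partial\Psi/\partial\nu,\Phi_h}_{\partial\Omega}$ holds with only a boundary term.

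After this rewriting, the target expression collapses (the cubic and linear volume terms $B(\Psi,\Psi,\Psi,\Phi_h)$ and $C(\Psi,\Phi_h)$ cancel against their counterparts up to interpolation errors) to a sum of manageable pieces: (a) $A({\rm I}_h\Psi-\Psi,\Phi_h)\lesssim\vertiii{\nabla({\rm I}_h\Psi-\Psi)}_0\lesssim h^\alpha\vertiii{\Psi}_{1+\alpha}$ by Lemma~\ref{2.3.17}$(i)$ and Lemma~\ref{Interpolation estimate}; (b) the boundary terms $\dual{\partial\Psi/\partial\nu-\partial({\rm I}_h\Psi)/\partial\nu,\Phi_h}_{\partial\Omega}$, handled by a trace inequality (Lemma~\ref{discrete trace inequality}) together with the interpolation estimate and the fact that $h_E^{1/2}\vertiii{\nabla\Phi_h\nu_E}_{0,E}\lesssim\vertiiih{\Phi_h}=1$; (c) the term $\dual{{\rm I}_h\Psi-\mathbf{g},\partial\Phi_h/\partial\nu}_{\partial\Omega}=\dual{{\rm I}_h\Psi-\Psi,\partial\Phi_h/\partial\nu}_{\partial\Omega}$ (using $\Psi=\mathbf{g}$ on $\partial\Omega$), bounded by $h_E^{1/2}\vertiii{\partial\Phi_h/\partial\nu}_{0,E}\cdot h_E^{-1/2}\vertiii{{\rm I}_h\Psi-\Psi}_{0,E}$ and the trace/interpolation estimates; (d) similarly the penalty term $\sum_E\frac{\sigma}{h_E}\dual{{\rm I}_h\Psi-\Psi,\Phi_h}_E$, controlled since $\sum_E\frac{\sigma}{h_E}\vertiii{{\rm I}_h\Psi-\Psi}_{0,E}^2\lesssim h^{2\alpha}\vertiii{\Psi}_{1+\alpha}^2$ by the scaled trace inequality applied to ${\rm I}_h\Psi-\Psi$; and (e) the nonlinear mismatch $B(\Psi,\Psi,\Psi,\Phi_h)-B({\rm I}_h\Psi,{\rm I}_h\Psi,{\rm I}_h\Psi,\Phi_h)$, which is exactly the term estimated by Lemma~\ref{2.3.17}$(v)$ (resp. case) and contributes $\epsilon^{-2}h^{2\alpha}\vertiii{\Psi}_{1+\alpha}^3\vertiiih{\Phi_h}$, plus the analogous $C(\Psi-{\rm I}_h\Psi,\Phi_h)\lesssim\epsilon^{-2}\vertiiih{\Psi-{\rm I}_h\Psi}_h\lesssim\epsilon^{-2}h^\alpha\vertiii{\Psi}_{1+\alpha}$ term from Lemma~\ref{2.3.17}$(iii)$, using Remark~\ref{remark interpolation}.

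Collecting these bounds, the dominant contributions are $h^\alpha\vertiii{\Psi}_{1+\alpha}$ (from the linear and interpolation parts) and $\epsilon^{-2}h^{2\alpha}\vertiii{\Psi}_{1+\alpha}^3 + \epsilon^{-2}h^\alpha\vertiii{\Psi}_{1+\alpha}$ (from the nonlinear part), which together are bounded by $h^\alpha(1+\epsilon^{-2}h^\alpha(1+\vertiii{\Psi}_{1+\alpha}^2))\vertiii{\Psi}_{1+\alpha}$ as claimed, after factoring out $h^\alpha\vertiii{\Psi}_{1+\alpha}$ and absorbing lower powers of $\vertiii{\Psi}_{1+\alpha}$ into the $(1+\vertiii{\Psi}_{1+\alpha}^2)$ factor and the hidden constant. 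The main obstacle I anticipate is the rigorous treatment of the consistency identity for $A(\Psi,\Phi_h)$ when $\alpha<1$: strictly $\partial\Psi/\partial\nu$ need not lie in $\mathbf{L}^2(\partial\Omega)$ edge-wise, so the boundary duality pairing $\dual{\partial\Psi/\partial\nu,\Phi_h}_{\partial\Omega}$ must be interpreted in $H^{-1/2}(\partial\Omega)$–$H^{1/2}(\partial\Omega)$ and bounded via $\vertiii{\Psi}_{1+\alpha}$ rather than an edgewise trace; one clean way around this is to replace $\Psi$ by a quasi-interpolant or to exploit the medius-analysis machinery already invoked in Lemma~\ref{2.3.17}, i.e. to bound $\dual{\partial({\rm I}_h\Psi)/\partial\nu,\Phi_h}_{\partial\Omega}$ directly (it is well-defined since ${\rm I}_h\Psi$ is piecewise smooth) and write $A(\Psi,\Phi_h) = A(\Psi-{\rm I}_h\Psi,\Phi_h)+A({\rm I}_h\Psi,\Phi_h)$, integrating by parts only on the smooth discrete function ${\rm I}_h\Psi$; this pushes all the low-regularity estimation into the benign term $A(\Psi-{\rm I}_h\Psi,\Phi_h)$, which is controlled purely by Lemma~\ref{Interpolation estimate} and Lemma~\ref{discrete trace inequality}.
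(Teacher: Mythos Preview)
Your decomposition is correct at the formal level, and terms (a), (c), (d), (e) are indeed controlled as you indicate. The genuine gap is term (b), $\dual{\partial(\Psi-{\rm I}_h\Psi)/\partial\nu,\Phi_h}_{\partial\Omega}$: for $\alpha<1$ (and certainly for $\alpha\le 1/2$) the quantity $\nabla\Psi$ has no $L^2$ trace edgewise, so the trace--interpolation argument you sketch cannot produce an $O(h^\alpha)$ bound. Your proposed workaround --- integrate by parts only on the discrete function ${\rm I}_h\Psi$ --- does avoid the ill-defined $\partial\Psi/\partial\nu$, but it creates the interior-edge residuals $\sum_{E\in\mathcal E_h^i}\dual{[\nabla({\rm I}_h\Psi)\nu_E]_E,\Phi_h}_E$, and you give no way to control these: pairing against $\Phi_h$ itself fails because $\sum_{E\in\mathcal E_h^i}h_E^{-1}\vertiii{\Phi_h}_{0,E}^2$ is not part of (and is not bounded by) $\vertiiih{\Phi_h}^2$. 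A smaller issue: your bound $C(\Psi-{\rm I}_h\Psi,\Phi_h)\lesssim\epsilon^{-2}h^\alpha\vertiii{\Psi}_{1+\alpha}$ does not fit the claimed right-hand side; use the $L^2$ estimate $\vertiii{\Psi-{\rm I}_h\Psi}_0\lesssim h^{1+\alpha}\vertiii{\Psi}_{1+\alpha}$ instead.

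The paper closes this gap with two ingredients you do not invoke. First, it splits $\Phi_h=(\Phi_h-\Pi_h\Phi_h)+\Pi_h\Phi_h$ using the extension operator $\Pi_h:\X_h\to\V_h$ of Lemma~\ref{enrichment Nitsche}; since $\Pi_h\Phi_h\in\V$, the consistency $N_h(\Psi;\Pi_h\Phi_h)=L_h(\Pi_h\Phi_h)$ holds rigorously and no $\partial\Psi/\partial\nu$ term ever appears. Second, after integrating by parts on ${\rm I}_h\Psi$, the interior-edge jumps $[\nabla({\rm I}_h\Psi)\nu_E]_E$ and the volume residuals $\boldsymbol\eta_T$ are paired against $\Phi_h-\Pi_h\Phi_h$, which \emph{is} small in the weighted norms of \eqref{enrichment2}; the residuals themselves are bounded by the local-efficiency Lemma~\ref{lem 3.1}$(ii)$, proved via bubble functions and the PDE for $\Psi$. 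This is precisely the medius-analysis step the paper emphasises: what you have written is essentially the $\h^2$-regularity argument of \cite{DGFEM}, which does not extend to $\alpha<1$ without the $\Pi_h$-splitting and Lemma~\ref{lem 3.1}.
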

\begin{proof}
Add and subtract $(A_{h}(\textrm{I}_{h}\Psi ,  \Pi_h\Phi_{h} )-L_{h}( \Pi_h\Phi_{h}))  $  to rewrite the left-hand side of the above displayed inequality as  
		\begin{align}\label{3.22 dG}
	A_{h}(\textrm{I}_{h}\Psi , \Phi_{h} ) + B&(\textrm{I}_{h}\Psi,\textrm{I}_{h}\Psi,\textrm{I}_{h}\Psi , \Phi_{h} )
	+ C(\textrm{I}_{h}\Psi , \Phi_{h} ) 
	-L_{h}(\Phi_{h})=(A_{h}(\textrm{I}_{h}\Psi , \Phi_{h}- \Pi_h\Phi_{h} )-L_{h}(\Phi_{h}- \Pi_h\Phi_{h}))
	  \notag\\&	
+(C(\textrm{I}_{h}\Psi , \Phi_{h} )	+B(\textrm{I}_{h}\Psi,\textrm{I}_{h}\Psi,\textrm{I}_{h}\Psi , \Phi_{h} ))  +(A_{h}(\textrm{I}_{h}\Psi ,  \Pi_h\Phi_{h} )-L_{h}( \Pi_h\Phi_{h})).    
	\end{align}
		The definition of $A_h(\cdot,\cdot)$ and $L_h(\cdot)$, followed by an integration by parts element-wise for the term  $A(\cdot,\cdot)$, $\Delta (\textrm{I}_{h}\Psi) = 0$ and $[\Phi_{h}- \Pi_h\Phi_{h}]_E=0$ for $E \in \mathcal{E}^i_h$ show that 
	\begin{align}\label{3.24}
	& A_{h}(\textrm{I}_{h}\Psi , \Phi_{h}- \Pi_h\Phi_{h} )  -L_{h}(\Phi_{h}- \Pi_h\Phi_{h})= \sum_{ E \in \mathcal{E}_h^{i}}\dual{  [\nabla(\textrm{I}_{h}\Psi)\nu_E ]_E,\Phi_{h}- \Pi_h\Phi_{h}}_{E}  \nonumber \\
	& \qquad + \sum_{E \in \mathcal{E}_h^{\partial}} \frac{\sigma}{h_E}\dual{\textrm{I}_{h}\Psi- \mathbf{g}, \Phi_{h}-  \Pi_h\Phi_{h}}_E 
	  +\dual{ \mathbf{g}-\textrm{I}_{h}\Psi, \nabla (\Phi_{h}- \Pi_h\Phi_{h})\nu }_{\partial \Omega}.
	\end{align}
	Set $\boldsymbol{ \eta}_T=( 2\epsilon^{-2}(\abs{\textrm{I}_{h}\Psi}^2 -1)\textrm{I}_{h}\Psi)|_T \text{ on a triangle } T \text{ and } \boldsymbol{ \eta}_E =  [\nabla(\textrm{I}_{h}\Psi)\nu_E]_E \text{ on the edge } E$ and observe that  
	\begin{align}\label{Term}
	\sum_{ E \in \mathcal{E}_h^{i}}\dual{  [\nabla(\textrm{I}_{h}\Psi)\nu_E ]_E,\Phi_{h}&- \Pi_h\Phi_{h}}_{E} +C(\textrm{I}_{h}\Psi , \Phi_{h} )  
	+B(\textrm{I}_{h}\Psi,\textrm{I}_{h}\Psi,\textrm{I}_{h}\Psi , \Phi_{h} )=(\sum_{T \in \mathcal{T}}\int_T \boldsymbol{ \eta}_T \cdot (\Phi_{h}-  \Pi_h\Phi_{h})\dx \notag \\& + \sum_{ E \in \mathcal{E}_h^{i}}\dual{ \boldsymbol{ \eta}_E ,\Phi_{h}- \Pi_h\Phi_{h}}_{E}) +(B(\textrm{I}_{h}\Psi,\textrm{I}_{h}\Psi,\textrm{I}_{h}\Psi ,  \Pi_h\Phi_{h} ) +C({\rm I}_{h}\Psi , \Pi_h\Phi_{h})).
	\end{align}
	The definition of $A_h(\cdot,\cdot) $, the consistency of the exact solution $\Psi$ given by $ N_h(\Psi, \Pi_h\Phi_{h})= L_{h}( \Pi_h\Phi_{h}) $ and $\Pi_h\Phi_{h} = 0 $ on ${\partial \Omega}$ yield, 
	\begin{align} \label{3.23}
	A_{h}(\textrm{I}_{h}\Psi ,  \Pi_h\Phi_{h} )-L_{h}( \Pi_h\Phi_{h}) 
	& = A(\textrm{I}_{h}\Psi -\Psi,  \Pi_h\Phi_{h} )+\dual{\mathbf{g}-\textrm{I}_{h}\Psi, \nabla(\Pi_h\Phi_{h})\nu}_{\partial \Omega} \nonumber \\
	& \quad - (B(\Psi ,\Psi ,\Psi ,  \Pi_h\Phi_{h}) + C(\Psi ,  \Pi_h\Phi_{h} )).
	\end{align} 
	An application of \eqref{3.24}-\eqref{3.23} in \eqref{3.22 dG}, a cancellation of a boundary term and a suitable re-arrangement of terms leads to 
	\begin{align}\label{3.22_new}
	& A_{h}({\rm{I}}_{h}\Psi , \Phi_{h} ) + B({\rm{I}}_{h}\Psi,{\rm{I}}_{h}\Psi,{\rm{I}}_{h}\Psi , \Phi_{h} )
	+ C({\rm{I}}_{h}\Psi , \Phi_{h} )  -L_{h}(\Phi_{h})  = 
	\sum_{T \in \mathcal{T}}\int_T \boldsymbol{ \eta}_T \cdot (\Phi_{h}-  \Pi_h\Phi_{h})\dx  \nonumber 
	\\
	& + \sum_{ E \in \mathcal{E}_h^{i}}\dual{ \boldsymbol{ \eta}_E ,\Phi_{h}- \Pi_h\Phi_{h}}_{E} + (A(\textrm{I}_{h}\Psi -\Psi,  \Pi_h\Phi_{h} )+ C({\rm I}_{h}\Psi -\Psi, \Pi_h\Phi_{h})) + \dual{\mathbf{g}-\textrm{I}_{h}\Psi, \nabla\Phi_{h} \nu}_{\partial \Omega}
	\nonumber \\
	&  + 
	\sum_{E \in \mathcal{E}_h^{\partial}} \frac{\sigma}{h_E}\dual{\textrm{I}_{h}\Psi- \mathbf{g}, \Phi_{h}-  \Pi_h\Phi_{h}}_E+ (B(\textrm{I}_{h}\Psi,\textrm{I}_{h}\Psi,\textrm{I}_{h}\Psi ,  \Pi_h\Phi_{h} )-B(\Psi ,\Psi ,\Psi ,  \Pi_h\Phi_{h})) . 	 
	\end{align}
	 Now we estimate the terms on the right-hand side of \eqref{3.22_new}.	
	A Cauchy-Schwarz  inequality, {\it Lemma \ref{lem 3.1}$(ii)$} and \eqref{enrichment2} with $\vertiiih{\Phi_{h}}=1
	$ leads to
	\begin{align}\label{term_eta}
	&\sum_{T \in \mathcal{T}}\int_T \boldsymbol{ \eta}_T \cdot (\Phi_{h}-  \Pi_h\Phi_{h})\dx + \sum_{ E \in \mathcal{E}_h^{i}}\dual{\boldsymbol{ \eta}_E ,(\Phi_{h}- \Pi_h\Phi_{h})}_{E} 
	\lesssim h^{\alpha} (1+ \epsilon^{-2}h^{\alpha}(1+ \vertiii{ \Psi}_{1+\alpha}^2)) \vertiii{ \Psi}_{1+\alpha}.
	\end{align}
	A use of Lemma \ref{Interpolation estimate}, \eqref{boundedness}$(ii)$ (resp. $(iii)$), Remark \ref{remark interpolation} and  \eqref{enrichment1} yields 
	\begin{align}
		A(\textrm{I}_{h}\Psi -\Psi,  \Pi_h\Phi_{h} )+C({\rm I}_{h}\Psi -\Psi , \Pi_h\Phi_{h}) &\lesssim (\vertiiih{\textrm{I}_{h}\Psi -\Psi}+ \epsilon^{-2}  \vertiii{\textrm{I}_{h}\Psi -\Psi}_0) \vertiiih{\Phi_{h}  } \notag
	\\&\lesssim  h^{\alpha}(1+\epsilon^{-2} h^{\alpha})\vertiii{\Psi}_{1+\alpha} .
	\end{align}
	The next two estimates are obtained using Cauchy-Schwarz inequality, the definition of $\|.\|_h$, Remark \ref{remark interpolation},  \eqref{enrichment1} and Lemma \ref{discrete trace inequality}$(ii)$. 	 
	\begin{align}
	&
	\dual{\mathbf{g}-\textrm{I}_{h}\Psi, \nabla \Phi_{h} \nu}_{\partial \Omega}  \leq
	\big(\sum_{E \in \mathcal{E}_h^{\partial}} \frac{\sigma}{h_E} \vertiii{\textrm{I}_{h}\Psi- \Psi}_{0,E}^2  \big)^{\frac{1}{2}}  \big(\sum_{E \in \mathcal{E}_h^{\partial}} \frac{h_E}{\sigma} \vertiii{\nabla\Phi_{h}\nu_E}_{0,E}^2 \big)^{\frac{1}{2}} 
	\lesssim h^\alpha \vertiii{\Psi}_{1+\alpha} ,
	\\&
	\sum_{E \in \mathcal{E}_h^{\partial}} \frac{\sigma}{h_E}\dual{\textrm{I}_{h}\Psi- \mathbf{g}, \Phi_{h}-  \Pi_h\Phi_{h}}_E
	\leq  \vertiiih{\textrm{I}_{h}\Psi -\Psi} \vertiiih{\Phi_{h}-  \Pi_h\Phi_{h} }
	\lesssim h^\alpha \vertiii{\Psi}_{1+\alpha}.
	\end{align}
	Lemma \ref{2.3.17}$(v)$, \eqref{enrichment1} and $\vertiiih{\Phi_h}=1$ yield
	\begin{align}\label{term_Bh1}
	&B(\textrm{I}_{h}\Psi,\textrm{I}_{h}\Psi,\textrm{I}_{h}\Psi ,  \Pi_h\Phi_{h} )-B(\Psi ,\Psi ,\Psi ,  \Pi_h\Phi_{h}) \lesssim  \epsilon^{-2} h^{2\alpha}\vertiii{\Psi}_{1+\alpha}^3.
	\end{align}
	A combination of the estimates in \eqref{term_eta}- \eqref{term_Bh1} completes the proof of Lemma \ref{Lemma for ball to ball map}.
\end{proof}
\noindent A use of Lemma \ref{Lemma for ball to ball map} and the methodology of \cite{DGFEM} leads to the  proof of Theorem  \ref{energy and $L^2$ norm error estimate}. An outline is sketched for completeness. 
\begin{proof}[\textbf{Proof of energy norm estimate in Theorem  \ref{energy and $L^2$ norm error estimate}}]
For $\Phi_{h} \!\in\! \X_h$,  let the non-linear map $\mu_{h}:\X_h \rightarrow \X_h$ be defined by 
\begin{align}\label{mu nonlinear map}
\dual{DN_h({{\rm I}_{h}\Psi}) \mu_{h}(\Theta_{h}),\Phi_{h} }
= 3B(\textrm{I}_{h}\Psi, \textrm{I}_{h}\Psi,\Theta_{h},\Phi_{h}) - B(\Theta_{h},\Theta_{h}, \Theta_{h},\Phi_{h}) +L_h(\Phi_{h}),
\end{align}
and let $\mathbb{B}_R(\textrm{I}_{h}\Psi):= \{\Phi_h \!\in\! \X_h: \vertiiih{\textrm{I}_h\Psi-\Phi_{h}} \leq R\}.$ Theorem \ref{discrete inf sup} helps to establish that $\mu_{h}$ is well-defined and any fixed point of $\mu_{h}$ is a solution of the discrete non-linear problem \eqref{discrete nonlinear problem}. Moreover,  for a sufficiently large choice of the penalization parameter $\sigma$, and a sufficiently small choice of discretization parameter $h$, there exists a positive constant $R(h)$ such that  $\mu_{h}$ maps the closed convex ball $\mathbb{B}_{R(h)}({\rm I}_{h}\Psi)$ to itself; that is,
$$\vertiii{\Theta_{h} - {\rm I}_{h}\Psi }_{h} \leq R(h) \implies \vertiii{\mu_{h}(\Theta_{h}) - {\rm I}_{h}\Psi}_{h} \leq R(h) \text{ for all } \Theta_{h} \in \X_{h} . $$
\noindent The definition of $\dual{DN_h({\rm I}_{h}\Psi) \cdot ,\cdot} $ from \eqref{perturbed bilinear form}, \eqref{mu nonlinear map}, followed by simple algebra and a re-arrangement of terms leads to
\begin{align}\label{3.22}
& \dual{DN_h({\rm I}_{h}\Psi) ({\rm I}_{h}\Psi-\mu_{h}(\Theta_{h})) ,\Phi_{h} }
 =(A_{h}(\textrm{I}_{h}\Psi , \Phi_{h} ) + B(\textrm{I}_{h}\Psi,\textrm{I}_{h}\Psi,\textrm{I}_{h}\Psi , \Phi_{h} )
+ C(\textrm{I}_{h}\Psi , \Phi_{h} )-L_{h}(\Phi_{h})) \nonumber \\
& \quad\quad +(2B(\textrm{I}_{h}\Psi,\textrm{I}_{h}\Psi,\textrm{I}_{h}\Psi , \Phi_{h} ) -3B(\textrm{I}_{h}\Psi,\textrm{I}_{h}\Psi, \Theta_{h},\Phi_{h})  + B(\Theta_{h},\Theta_{h}, \Theta_{h},\Phi_{h}) )=:T'_1 +T'_2.
\end{align}
\noindent The term $T'_1$ is estimated using Lemma \ref{Lemma for ball to ball map}. Set $\mathbf{\tilde{e}}:=\Theta_{h} -\textrm{I}_{h}\Psi$. The definition of $B(\cdot,\cdot,\cdot,\cdot)$, the Cauchy-Schwarz inequality and some straight forward algebraic manipulations lead to 
\begin{align}\label{term_Bh}
T'_2 \lesssim  2\epsilon^{-2} \vertiiih{\mathbf{\tilde{e}} }^2 (\vertiiih{\mathbf{\tilde{e}} }+\vertiii{\Psi}_{1+\alpha})\vertiiih{\Phi_{h} }.
\end{align}
Discrete inf-sup condition in Lemma \ref{discrete inf sup} yields that there exists a $\Phi_{h}\!\in\! \X_h$ with $\vertiiih{\Phi_{h}}=1$ such that
\begin{align}\label{infsup}
\beta_0\vertiiih{ {\rm I}_{h}\Psi-\mu_{h}(\Theta_{h})} \leq \dual{DN_h({\rm I}_{h}\Psi)  ({\rm I}_{h}\Psi-\mu_{h}(\Theta_{h})) ,\Phi_{h} }.
\end{align}
Since $\Theta_{h} \!\in\! \mathbb{B}_R(\textrm{I}_{h}\Psi), $ $\vertiiih{\tilde{\e}}= \vertiiih{\Theta_{h} -\textrm{I}_{h}\Psi}\leq R(h)$. For a fixed value of $\epsilon$, a use of  Lemma \ref{Lemma for ball to ball map} and \eqref{term_Bh} in \eqref{3.22}, and \eqref{infsup} with $\vertiiih{\Phi_{h}}=1$ leads to 
\begin{align*}
\vertiiih{ {\rm I}_{h}\Psi-\mu_{h}(\Theta_{h})}  \leq C_2(h^{\alpha}(1+h^{\alpha})+ R(h)^2 ( R(h)+1)),
\end{align*}
where $C_2$ is a constant independent of $h.$
For a choice of $R(h):= 2C_2h^{\alpha}$ and
$h< h_2:= \min(h_0, h_1)$ with $h_1^{\alpha}< \frac{1}{1+ 4C_2^2  ( 2C_2 h_0^{\alpha} +1)} $, a simple algebraic calculation leads to
$\vertiiih{ {\rm I}_{h}\Psi-\mu_{h}(\Theta_{h})}  \leq 2C_2h^{\alpha}= R(h).$
 Analogous ideas as \cite[Lemma 5.3]{DGFEM} establishes that $\text{ for all } \Theta_1,  \Theta_2 \!\in\! \mathbb{B}_{R(h)}({\rm I}_{h}\Psi)$,
\begin{align*}
\vertiii{\mu_{h}(\Theta_1)-\mu_{h}(\Theta_2)}_{h}\lesssim h^{\alpha}(h^{\alpha}+1)\vertiii{\Theta_1-\Theta_2}_{h}.
\end{align*}
\noindent Thus the map $\mu_h$ is well-defined, continuous and maps a closed convex subset $\mathbb{B}_R(\textrm{I}_{h}\Psi)$ of a Hilbert space $\X_{h}$ to itself. Therefore, Brouwer's fixed point theorem  and contraction result stated above establishes the existence and uniqueness of the fixed point, say $\Psi_{h}$ in the ball $\mathbb{B}_R(\textrm{I}_{h}\Psi)$. A triangle inequality, $\vertiiih{\textrm{I}_{h}\Psi- \Psi_h}\lesssim h^{\alpha}$ and Remark \ref{remark interpolation} yield the {\it a priori} error estimate in energy norm.
\end{proof}
\begin{rem} The  proof of the energy norm estimate relies on the techniques of medius analysis \cite{Gudi_2010_A_new_error_analysis} to deal with the milder regularity of the exact solution. This involves a different strategy for the proof using the local efficiency results when compared to 
	 \cite[Theorem 5.1]{DGFEM}, where $\h^2(\Omega)$ regularity is assumed for the exact solution.
\end{rem}
\begin{rem}
	For $\alpha=1$, that is, $\Psi \!\in\! \h^{2}(\Omega),$ it is well-known \cite{Bethuel} that  $\vertiii{\Psi}_2$ is bounded independent of $\epsilon$. In this case, $\displaystyle \vertiiih{ {\rm I}_{h}\Psi-\mu_{h}(\Theta_{h})} \leq C_3(h (1+ \epsilon^{-2}h)+\epsilon^{-2} \vertiiih{\mathbf{\tilde{e}} }^2 (\vertiiih{\mathbf{\tilde{e}} }+1)), $
	where the constant $C_3$ is independent of $h$ and $\epsilon$.
For a sufficiently small choice of the discretization parameter chosen as $h =O(\epsilon^{2+\tau}), \tau > 0$ and $R(h)= 2C_3 h $, $\mu_{h}$ maps the ball $\mathbb{B}_R({\rm I}_h \Psi)$ to itself, and it is a contraction map on $\mathbb{B}_R({\rm I}_h \Psi)$. The modification of the proof in above theorem follows analogous to Theorem $5.1$ in \cite{DGFEM} and yields $h$-$\epsilon$ dependent estimates for this case.
\end{rem}	
 Next,  the $L^2$ norm error estimate is derived using the Aubin-Nitsche \cite{ciarlet} duality  technique. The proof relies on energy norm error bounds that has been established for a fixed $\epsilon$. 
  However, when $\Psi \in \h^2(\Omega)$, the proof can be modified as in Theorem $3.5$ in \cite{DGFEM} to obtain $h-\epsilon$ dependent estimates.
	\begin{proof}[\textbf{Proof of ${\bf L}^2$ estimate in Theorem  \ref{energy and $L^2$ norm error estimate}}]  Set ${\boldsymbol \varphi}_h=\textrm{I}_{h}\Psi- \Psi_h$ and choose  $G= {\boldsymbol \varphi}_h$, $\Phi= \Pi_h {\boldsymbol \varphi}_h $ in the continuous dual linear problem  \eqref{dual problem} to deduce
		\begin{align} \label{Term_0}
		\vertiii{{\boldsymbol \varphi}_h}_0^2= ({\boldsymbol \varphi}_h, {\boldsymbol \varphi}_h)
		= ({\boldsymbol \varphi}_h, {\boldsymbol \varphi}_h- \Pi_h {\boldsymbol \varphi}_h) + \dual{DN(\Psi)  \Pi_h {\boldsymbol \varphi}_h , \boldsymbol \chi}.
		\end{align}
		Let $\textrm{I}_{h}\boldsymbol \chi \!\in\! \V_h \subset \h^1_0(\Omega)$ denotes the interpolant of $\boldsymbol \chi $. A use of  $\textrm{I}_h\boldsymbol \chi =0$ on $\partial \Omega$ implies 
		\begin{align*}
		\dual{DN(\Psi)   {\boldsymbol \varphi}_h , \textrm{I}_{h}\boldsymbol \chi}= \dual{DN_h(\Psi) {\boldsymbol \varphi}_h, \textrm{I}_{h}\boldsymbol \chi}   +   \dual{ {\boldsymbol \varphi}_h, \nabla( \textrm{I}_{h} \boldsymbol \chi) \nu}_{\partial \Omega }.
		\end{align*}
		Add and subtract $\dual{DN(\Psi)  ({\boldsymbol \varphi}_h-\Pi_h {\boldsymbol \varphi}_h ), \textrm{I}_{h}\boldsymbol \chi}$ in the right-hand side of \eqref{Term_0}, use the definition of $\dual{DN(\Psi)\cdot, \cdot}$ and the last displayed identity with $\Pi_h{\boldsymbol \varphi}_h=0$ on $\partial \Omega$, and re-arrange the terms to obtain
		\begin{align} \label{3.4.1}
		\vertiii{{\boldsymbol \varphi}_h}_0^2	
		&
		=({\boldsymbol \varphi}_h, {\boldsymbol \varphi}_h- \Pi_h {\boldsymbol \varphi}_h)+ (-A( \textrm{I}_{h}\boldsymbol \chi,{\boldsymbol \varphi}_h-\Pi_h{\boldsymbol \varphi}_h) 
		+ \dual{  \nabla( \textrm{I}_{h} \boldsymbol \chi) \nu,{\boldsymbol \varphi}_h -\Pi_h{\boldsymbol \varphi}_h }_{\partial \Omega }) +(C(\textrm{I}_{h}\boldsymbol \chi,\Pi_h{\boldsymbol \varphi}_h-{\boldsymbol \varphi}_h )
	 \notag \\
	& \quad 	+3B(\Psi,\Psi, \textrm{I}_{h}\boldsymbol \chi, \Pi_h{\boldsymbol \varphi}_h-{\boldsymbol \varphi}_h ) )+\dual{DN(\Psi)  \Pi_h {\boldsymbol \varphi}_h , \boldsymbol \chi -\textrm{I}_{h}\boldsymbol \chi}  +\dual{DN_h(\Psi) {\boldsymbol \varphi}_h, \textrm{I}_{h}\boldsymbol \chi} 
		\notag \\
		&      
		=:\mathrm{T}_1+\mathrm{T}_2+ \mathrm{T}_3+\mathrm{T}_4+\mathrm{T}_5.
		\end{align}	
A use of H\"older's inequality, \eqref{enrichment2} and the estimate $\vertiiih{{\boldsymbol \varphi}_h}= \vertiiih{\textrm{I}_{h}  \Psi -\Psi_h}\lesssim h^\alpha$ from the proof of Theorem \ref{energy and $L^2$ norm error estimate} leads to  $\displaystyle \mathrm{T}_1=({\boldsymbol \varphi}_h, {\boldsymbol \varphi}_h- \Pi_h {\boldsymbol \varphi}_h) \leq \vertiii{{\boldsymbol \varphi}_h- \Pi_h {\boldsymbol \varphi}_h}_0 \vertiii{{\boldsymbol \varphi}_h}_0 \lesssim h^{2\alpha} \vertiii{{\boldsymbol \varphi}_h}_0. $
		Apply  integration by parts element-wise for the term $A({\boldsymbol \varphi}_h-\Pi_h{\boldsymbol \varphi}_h,\textrm{I}_{h}{\boldsymbol \chi}  )$ in the expression of ${\mathrm T}_2$, use $\Delta ( \textrm{I}_{h}\boldsymbol \chi)=0$ and recall the definitions of the local term $\boldsymbol{ \eta}_E=[\nabla( \textrm{I}_{h} \boldsymbol \chi) \nu_E]_E $ on $E$ from Lemma \ref{local efficiency for L2 estimate} with $G= {\boldsymbol \varphi}_h$ and $Osc({\boldsymbol \varphi}_h)=0$. This with a Cauchy-Schwarz inequality, Lemma \ref{local efficiency for L2 estimate}, \eqref{enrichment2} and  the estimate $\vertiiih{{\boldsymbol \varphi}_h} \lesssim h^\alpha$ leads to
		\begin{align*} 
		{\mathrm T}_2=	-A( \textrm{I}_{h}\boldsymbol \chi,{\boldsymbol \varphi}_h-\Pi_h{\boldsymbol \varphi}_h) 
		+ \dual{  \nabla( \textrm{I}_{h} \boldsymbol \chi) \nu,{\boldsymbol \varphi}_h -\Pi_h{\boldsymbol \varphi}_h }_{\partial \Omega }
		= \sum_{ E \in \mathcal{E}_h^{i}} \dual{ \boldsymbol{ \eta}_E, \Pi_h{\boldsymbol \varphi}_h -{\boldsymbol \varphi}_h }_E 
		\lesssim h^{2\alpha} \vertiii{{\boldsymbol \varphi}_h}_{0}.
		\end{align*}
		Lemma \ref{2.3.17}, \eqref{xi regularity bound}, \eqref{enrichment1} and $\vertiiih{{\boldsymbol \varphi}_h}\lesssim h^\alpha$  
		yield 
		\begin{align*} 
{\mathrm T}_3=	3B(\Psi,\Psi, \textrm{I}_{h}\boldsymbol \chi, \Pi_h{\boldsymbol \varphi}_h-{\boldsymbol \varphi}_h )+C(\textrm{I}_{h}\boldsymbol \chi,\Pi_h{\boldsymbol \varphi}_h-{\boldsymbol \varphi}_h ) 
			\lesssim   \vertiii{\textrm{I}_{h}\boldsymbol \chi}_h  
		\vertiii{\Pi_h{\boldsymbol \varphi}_h -{\boldsymbol \varphi}_h}_0 
	\lesssim   h^{2\alpha}
		\vertiii{{\boldsymbol \varphi}_h}_0. 
		\end{align*}
%
		\noindent  The boundedness and interpolation estimates in Lemmas \ref{boundedness}, \ref{Interpolation estimate} 
		, \eqref{enrichment1} and $\vertiiih{{\boldsymbol \varphi}_h}\lesssim h^\alpha,$ and \eqref{xi regularity bound}, leads to a bound for the fourth term of \eqref{3.4.1} as 
		\begin{align*}
		{\mathrm T }_4 &
		= A(   \Pi_h{\boldsymbol \varphi}_h , \boldsymbol \chi -\textrm{I}_{h}\boldsymbol \chi)+3B(\Psi,\Psi,  \Pi_h{\boldsymbol \varphi}_h , \boldsymbol \chi -\textrm{I}_{h}\boldsymbol \chi)+ C(  \Pi_h{\boldsymbol \varphi}_h , \boldsymbol \chi -\textrm{I}_{h}\boldsymbol \chi) 
		\lesssim h^{2\alpha} \vertiii{\boldsymbol \chi}_{1+\alpha}\lesssim h^{2\alpha} \vertiii{{\boldsymbol \varphi}_h}_0. 
		\end{align*}		
		The discrete nonlinear problem \eqref{discrete nonlinear problem} plus the consistency of the exact solution  $\Psi$ yield $\displaystyle 	N_h(\Psi, \textrm{I}_{h}\boldsymbol \chi)= L_{h}( \textrm{I}_{h}\boldsymbol \chi) = N_h(\Psi_h, \textrm{I}_{h}\boldsymbol \chi).$
		\noindent Recall that ${\boldsymbol \varphi}_h=\textrm{I}_{h}\Psi- \Psi_h$ and re-write the last term in \eqref{3.4.1}  using the above displayed identity, and the definitions of $DN_h$ and $N_h$ as
		\begin{align*}
		{\mathrm T}_5  =& \dual{DN_h(\Psi) {\boldsymbol \varphi}_h, \textrm{I}_{h}\boldsymbol \chi}     + N_h(\Psi_{h}, \textrm{I}_{h}\boldsymbol \chi)- N_h(\Psi, \textrm{I}_{h}\boldsymbol \chi)
		= A_h(\textrm{I}_{h}\Psi- \Psi , \textrm{I}_{h}\boldsymbol \chi)+(C(\textrm{I}_{h}\Psi- \Psi, \textrm{I}_{h}\boldsymbol \chi )\\& +3B(\Psi,\Psi,\textrm{I}_{h}\Psi- \Psi,\textrm{I}_{h}\boldsymbol \chi))+(2 B(\Psi,\Psi,\Psi,\textrm{I}_{h}\boldsymbol \chi) -3 B(\Psi,\Psi, \Psi_h,\textrm{I}_{h}\boldsymbol \chi)+ B(\Psi_h,\Psi_h,\Psi_h,\textrm{I}_{h}\boldsymbol \chi)).
		\end{align*}
		An integration by parts element-wise for the term $A(\Psi-\textrm{I}_{h}\Psi,\textrm{I}_{h} \boldsymbol \chi)$, $\Delta \textrm{I}_{h} \boldsymbol \chi=0,$ a Cauchy-Schwarz inequality, {\it Lemma \ref{local efficiency for L2 estimate}} with $G= {\boldsymbol \varphi}_h$, $Osc({\boldsymbol \varphi}_h)=0$ and Lemma \ref{Interpolation estimate} lead to an estimate for the first term on the right-hand side of ${\mathrm T }_5$ above as
		\begin{align}\label{L2_5}
	 \sum_{ E \in \mathcal{E}_h^{i}} \dual{\boldsymbol{ \eta}_E,\textrm{I}_{h}\Psi -\Psi }_E
		\lesssim h^{2\alpha} \vertiii{{\boldsymbol \varphi}_h}_0 .
		\end{align}
		Here, $\boldsymbol{ \eta}_E$ is the local term as defined in the above estimates.
	Lemma \ref{2.3.17},	Remark \ref{remark interpolation} and  \eqref{xi regularity bound} leads to an estimate for the second term in the expression on the right-hand side for ${\mathrm T}_5$ above as
		\begin{align}\label{L2_4}
	C(\textrm{I}_{h}\Psi- \Psi, \textrm{I}_{h}\boldsymbol \chi )+	3B(\Psi,\Psi, \Psi-\textrm{I}_{h}\Psi ,\textrm{I}_{h}\boldsymbol \chi)
		\lesssim   h^{2\alpha} \vertiii{\boldsymbol \chi}_{1+\alpha} 	\lesssim h^{2\alpha} \vertiii{{\boldsymbol \varphi}_h}_0 .
		\end{align}
		Proceed as in the estimate for $T_2'$ (also see  \cite[ Theorem $3.5$, $T_8$]{DGFEM}),
		and use Remark \ref{remark interpolation} and \eqref{xi regularity bound} to estimate the third term in the expression on the right-hand side for ${\mathrm T}_5$ as
		\begin{align}\label{L2_7}
		&
		2 B(\Psi,\Psi,\Psi,\textrm{I}_{h}\boldsymbol \chi) -3 B(\Psi,\Psi, \Psi_h,\textrm{I}_{h}\boldsymbol \chi)+ B(\Psi_h,\Psi_h,\Psi_h,\textrm{I}_{h}\boldsymbol \chi) \notag\\& 
		\lesssim \vertiiih{\Psi- \Psi_h}^2(\vertiiih{\Psi- \Psi_h}+\vertiii{\Psi}_1)\vertiiih{\textrm{I}_{h}\boldsymbol \chi}
		\lesssim  h^{2\alpha}\vertiii{\boldsymbol \chi}_{1+\alpha}
		\lesssim h^{2\alpha} \vertiii{{\boldsymbol \varphi}_h}_0.
		\end{align}
		A combination of the  estimates in \eqref{L2_5}- \eqref{L2_7} yields ${\mathrm T}_5\lesssim  h^{2\alpha} \vertiii{{\boldsymbol \varphi}_h}_0.$	Substitute the estimates derived for  ${\mathrm T}_1$ to ${\mathrm T}_5$ in \eqref{3.4.1} and cancel the term $\vertiii{{\boldsymbol \varphi}_h}_0$ to obtain $ \vertiii{\Ihpsi - \Psi_{h}}_0 \lesssim h^{2\alpha}. $ This estimate,  a triangle inequality and Lemma \ref{Interpolation estimate} yield
		$\vertiii{\Psi - \Psi_{h}}_0\leq \vertiii{\Psi - \Ihpsi }_0 + \vertiii{\Ihpsi  - \Psi_{h}}_0 \lesssim h^{2\alpha}$ and this concludes the proof.
	\end{proof}
	\section{A posteriori error estimate} \label{A posteriori error estimate}

		In this section, we present some auxiliary results  followed by the {\it a posteriori} error analysis for the Nitsche's method. 
		Note that, to derive the {\it a posteriori} estimates, it is assumed that $\mathbf{g}$ (the inhomogeneous Dirichlet boundary condition) belongs to $ \h^{\frac{1}{2}}(\partial\Omega)\cap \mathbf{C}^0(\overline{\partial\Omega}).$

\medskip

\noindent	The approximation properties of the  Scott-Zhang interpolation operator \cite{ ScottZhang} are introduced first. 
		\begin{lem}(Scott-Zhang interpolation )\cite{ ScottZhang}\label{Scott-Zhang interpolation} 
For $l, m \in \mathbb{N}$ with $1 \leq l < \infty$, there exists an interpolation operator ${\rm I}^{SZ}_h: H^l_0(\Omega) \rightarrow V_h:= X_h \cap H^1_0(\Omega) $  that satisfies the stability and aproximation properties given by:
			(a)  for all $0\leq m \leq \min(1,l)$,
				$\norm{{\rm I}^{SZ}_h v }_{m, \Omega} \leq C_{SZ}\norm{v}_{l,\Omega}  \,\text{ for all } v \in H^{l}_0(\Omega) ,$
			(b)  provided $l \leq 2$, for all $0 \leq m \leq l$,
				 $\norm{v- {\rm I}^{SZ}_h v }_{m, T} \leq C_{SZ}h_T^{l-m}\abs{v}_{l, \omega_T}  \text{ for all } v \in H_0^{l}(\omega_T) \text{ and } T \in \mathcal{T},$
			where the constant $C_{SZ}>0$ is independent of $h$,  and $\omega_T$ is the set of all triangles  in $\mathcal{T}$ that share at least one vertex with $T$. 
	\end{lem}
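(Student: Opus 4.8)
The plan is to recall the classical Scott--Zhang construction and verify the three asserted properties (the mapping into $V_h$, stability (a), and approximation (b)) by the standard scaling-plus-Bramble--Hilbert argument; this is not a new result, so the proof will be a sketch following \cite{ScottZhang}. First I would fix the Lagrange nodes of $X_h$ (for $P_1$ these are the vertices of $\mathcal{T}$) with associated nodal basis functions $\{\phi_z\}$. To each node $z$ I would attach a single $(n-1)$-dimensional simplex $\sigma_z$ containing $z$, with the crucial restriction that $\sigma_z \subset \partial\Omega$ whenever $z$ lies on $\partial\Omega$. On $\sigma_z$ I would take the $L^2(\sigma_z)$-biorthogonal (dual) basis $\{\psi_z\}$ to the restrictions $\{\phi_w|_{\sigma_z}\}$ of the nodal functions living on $\sigma_z$, and define
\[
{\rm I}^{SZ}_h v := \sum_{z} \Big(\int_{\sigma_z} \psi_z\, v \,\ds\Big)\, \phi_z .
\]
For $v\in H^l_0(\Omega)$ with $l\ge 1$ the trace of $v$ on each face is well defined, so these integrals make sense; since $v$ has vanishing trace on $\partial\Omega$ and $\sigma_z\subset\partial\Omega$ for boundary nodes $z$, each such coefficient vanishes, whence ${\rm I}^{SZ}_h v$ vanishes at all boundary nodes and therefore ${\rm I}^{SZ}_h v\in V_h = X_h\cap H^1_0(\Omega)$.

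Next I would record the local polynomial invariance: because the $\psi_z$ are dual to the $\phi_w$ on $\sigma_z$, one has ${\rm I}^{SZ}_h q = q$ for every globally affine $q$, and moreover on each element $T$ the restriction $({\rm I}^{SZ}_h v)|_T$ depends only on $v|_{\omega_T}$ and reproduces $P_1(\omega_T)$. The stability estimate (a) then follows by a standard transformation to the reference element: the entries of the (inverse) Gram matrix defining $\psi_z$ are bounded by $h$-independent constants after scaling, so $\|\psi_z\|_{L^2(\sigma_z)} \lesssim h_{\sigma_z}^{-(n-1)/2}$, and combining this with a trace inequality bounding $\|v\|_{L^2(\sigma_z)}$ by $\|v\|_{L^2(T')}$ and $\|\nabla v\|_{L^2(T')}$ over an element $T'\supset\sigma_z$ yields the claimed bound $\|{\rm I}^{SZ}_h v\|_{m,\Omega}\le C_{SZ}\|v\|_{l,\Omega}$ for $0\le m\le\min(1,l)$.

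For the approximation property (b) I would use the local $P_1$-reproduction together with the Deny--Lions/Bramble--Hilbert lemma on the patch $\omega_T$. Writing, for $q\in P_1$,
\[
\|v-{\rm I}^{SZ}_h v\|_{m,T} \le \|v-q\|_{m,T} + \|{\rm I}^{SZ}_h(v-q)\|_{m,T},
\]
applying the local stability of ${\rm I}^{SZ}_h$ on the second term (controlled by $\|v-q\|_{l,\omega_T}$ times appropriate $h_T$-powers from scaling), and then choosing $q$ to be the $P_1$ polynomial minimizing $|v-q|$ over $\omega_T$, the Bramble--Hilbert estimate $\inf_{q\in P_1}\|v-q\|_{l,\omega_T}\lesssim h_T^{\,l}|v|_{l,\omega_T}$ for $l\le 2$ gives $\|v-{\rm I}^{SZ}_h v\|_{m,T}\le C_{SZ}h_T^{l-m}|v|_{l,\omega_T}$. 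I expect the main technical obstacle to be the careful scaling analysis of the dual functionals $\psi_z$ — in particular verifying that the relevant local Gram matrices are uniformly well conditioned under the shape-regularity of $\mathcal{T}$ — together with the bookkeeping needed to keep the patches $\omega_T$ of bounded overlap so that the local bounds sum to the global ones; the boundary-condition preservation, by contrast, is immediate once $\sigma_z\subset\partial\Omega$ is enforced for boundary nodes.
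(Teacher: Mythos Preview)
Your sketch correctly reproduces the classical Scott--Zhang construction and the standard scaling/Bramble--Hilbert argument from \cite{ScottZhang}. Note, however, that the paper does not prove this lemma at all: it is simply quoted as a known result with a citation, so there is no ``paper's own proof'' to compare against --- your outline is essentially the original argument.
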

		\begin{lem}\cite[Page 48]{KimKwang}\label{psi-psi_g estimate}
		Let $\Psi_{\mathbf{g}} \in \boldsymbol{\mathcal{{X}}}$ solve
			$\displaystyle \int_{\Omega} \nabla \Psi_{\mathbf{g}} \cdot \nabla \Phi\, {\rm dx} = \sum_{ T \in \mathcal{T}} \int_T \nabla \Psi_{h} \cdot \nabla \Phi \, {\rm dx}  \text{ for all } \Phi \in \V,$
			where $\Psi_h$ is the solution of \eqref{discrete nonlinear problem}. Then there exists a constant $C>0$, depending only on the minimum angle of $\mathcal{T} $ such that 
		$\displaystyle	\sum_{ T \in \mathcal{T}} \vertiii{\nabla (\Psi_{\mathbf{g}} - \Psi_{h})}_{0, T}^2 \leq C (\vartheta_{hot}^{\partial})^2, $
			where 
			$\displaystyle 
			(\vartheta_{hot}^{\partial})^2:= \sum_{ E \in \mathcal{E}_h^{\partial} }h_E^{-1} \vertiii{ \mathbf{g} - \mathbf{g}_h}_{0,E}^2+ h_E \vertiii{\nabla(\mathbf{g} - \mathbf{g}_h)}_{0,E}^2
			$,
			$ \mathbf{g}_h$ being the standard Lagrange interpolant \cite{ciarlet} of $\mathbf{g}$ from $\mathbf{P}_2(\mathcal{E}^{\partial}_h) \cap \mathbf{C}^0(\overline{\partial\Omega} ).$
		\end{lem}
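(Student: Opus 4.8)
The plan is to recast the estimate as a Dirichlet energy-minimisation problem and then to exhibit one good competitor. First I would reassemble the defining relation for $\Psi_{\mathbf{g}}$. Since $\Psi_h \in \X_h \subset \mathbf{H}^1(\Omega)$ is globally continuous (it is piecewise affine and conforming across interior edges), $\sum_{T \in \mathcal{T}} \int_T \nabla \Psi_h \cdot \nabla \Phi \dx = \int_\Omega \nabla \Psi_h \cdot \nabla \Phi \dx$, so the defining relation becomes $\int_\Omega \nabla(\Psi_{\mathbf{g}} - \Psi_h) \cdot \nabla \Phi \dx = 0$ for all $\Phi \in \V$. Hence $w := \Psi_{\mathbf{g}} - \Psi_h$ is weakly harmonic, and since $\Psi_{\mathbf{g}} = \mathbf{g}$ on $\partial\Omega$ it has boundary trace $w|_{\partial\Omega} = \mathbf{g} - \Psi_h|_{\partial\Omega}$. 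For \emph{any} $z \in \mathbf{H}^1(\Omega)$ with $z|_{\partial\Omega} = \mathbf{g} - \Psi_h|_{\partial\Omega}$ one has $w - z \in \V$; testing that relation with $w - z$ and applying the Cauchy--Schwarz inequality gives $\sum_{T \in \mathcal{T}} \vertiii{\nabla w}_{0,T}^2 \le \sum_{T \in \mathcal{T}} \vertiii{\nabla z}_{0,T}^2$. Thus $w$ realises the minimal $\mathbf{H}^1$-energy among extensions of its own boundary trace, and it suffices to produce one admissible competitor $z$ with $\sum_{T \in \mathcal{T}} \vertiii{\nabla z}_{0,T}^2 \lesssim (\vartheta_{hot}^\partial)^2$.

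Second I would construct $z$ on the strip of triangles abutting $\partial\Omega$. Introduce the quadratic Lagrange interpolant $\mathbf{g}_h$ of $\mathbf{g}$ on $\mathcal{E}_h^{\partial}$ and split the target trace as $\mathbf{g} - \Psi_h|_{\partial\Omega} = (\mathbf{g} - \mathbf{g}_h) + (\mathbf{g}_h - \Psi_h|_{\partial\Omega})$. The genuinely non-polynomial part $\mathbf{g} - \mathbf{g}_h$ is lifted by a Scott--Zhang-type extension (cf. Lemma~\ref{Scott-Zhang interpolation}) that vanishes at all interior degrees of freedom; a localised weighted trace inequality together with an inverse estimate bounds its Dirichlet energy by $\sum_{E \in \mathcal{E}_h^{\partial}} \big( h_E^{-1} \vertiii{\mathbf{g} - \mathbf{g}_h}_{0,E}^2 + h_E \vertiii{\nabla(\mathbf{g} - \mathbf{g}_h)}_{0,E}^2 \big) = (\vartheta_{hot}^\partial)^2$, with a constant depending only on the minimal angle of $\mathcal{T}$. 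The remaining piecewise-polynomial part $\mathbf{g}_h - \Psi_h|_{\partial\Omega}$ is lifted by its nodal values into the first layer of triangles, and its energy is controlled, by shape-regularity and inverse inequalities, by the same edge-weighted quantities. Adding the two contributions and invoking the minimisation property from the first step yields $\sum_{T \in \mathcal{T}} \vertiii{\nabla(\Psi_{\mathbf{g}} - \Psi_h)}_{0,T}^2 \lesssim (\vartheta_{hot}^\partial)^2$; this is exactly the computation carried out in \cite[Page~48]{KimKwang}, to which we refer for the details.

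The \emph{main obstacle} is the competitor construction: the extension $z$ must reproduce the boundary trace $\mathbf{g} - \Psi_h|_{\partial\Omega}$ \emph{exactly} (so that $w - z \in \V$ and the Dirichlet principle may be applied), yet its Dirichlet energy must be controlled \emph{only} by the higher-order, $\mathbf{g}_h$-based oscillation $(\vartheta_{hot}^\partial)$. This hinges on a careful localised weighted trace/extension estimate — morally, that the $H^{1/2}(\partial\Omega)$-energy of a piecewise-smooth boundary fluctuation is dominated by $\sum_{E} \big( h_E^{-1} \| \cdot \|_{0,E}^2 + h_E \| \partial_s \cdot \|_{0,E}^2 \big)$ — combined with precise bookkeeping so that the finite-element part of the fluctuation does not degrade the higher order. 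Everything else (reassembling the element-wise gradients, the harmonicity of $w$, the Cauchy--Schwarz step, and the standard interpolation bounds for $\mathbf{g} - \mathbf{g}_h$) is routine.
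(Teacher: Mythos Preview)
The paper does not prove this lemma at all; it is imported verbatim from \cite[Page~48]{KimKwang}. Your Dirichlet-principle reduction in the first paragraph is exactly the right framework and is presumably what the cited reference does: $w=\Psi_{\mathbf g}-\Psi_h$ is weakly harmonic with trace $\mathbf g-\Psi_h|_{\partial\Omega}$, so it minimises the Dirichlet energy among all $\mathbf H^1$ extensions of that trace, and one only has to exhibit a competitor.

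The gap is in your second paragraph. You assert that the piecewise-polynomial part $\mathbf g_h-\Psi_h|_{\partial\Omega}$, once lifted nodally into the boundary layer, has energy ``controlled \dots\ by the same edge-weighted quantities'', i.e.\ by $(\vartheta_{hot}^{\partial})^2$. For Nitsche's method this is not true: $\Psi_h$ does \emph{not} interpolate $\mathbf g$ at boundary vertices, so $\mathbf g_h-\Psi_h|_{\partial\Omega}$ bears no a~priori relation to $\mathbf g-\mathbf g_h$. A nodal lift of a piecewise polynomial $p$ on $E$ has energy $\sim h_E^{-1}\vertiii{p}_{0,E}^2$, so the honest bound this step produces is $\sum_{E\in\mathcal E_h^\partial} h_E^{-1}\vertiii{\mathbf g_h-\Psi_h}_{0,E}^2\lesssim (\vartheta_{hot}^{\partial})^2+\sum_{E\in\mathcal E_h^\partial}(\vartheta_E^{\partial})^2$, not $(\vartheta_{hot}^{\partial})^2$ alone. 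Either the lemma as printed silently absorbs the estimator term $\sum(\vartheta_E^{\partial})^2$ (harmless downstream, since the proof of Theorem~\ref{reliability and efficiency} adds it back anyway), or the result in \cite{KimKwang} is stated for a conforming discretisation in which $\Psi_h|_{\partial\Omega}$ is the nodal $P_1$ interpolant of $\mathbf g$; in that case the second piece becomes $\mathbf g_h-I_h^1\mathbf g$, a quadratic edge bubble, and the genuine work---bounding the harmonic extension of the $P_1$ boundary defect by the $P_2$-based oscillation---uses an edge-bubble lifting together with a Poincar\'e/orthogonality argument on each boundary edge, not the naive triangle inequality you sketch. Your ``main obstacle'' paragraph correctly names the difficulty but your proposed splitting does not resolve it.
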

	\begin{rem}
{ Note that the benchmark liquid crystal example: Example \ref{Luo_exampple} in \cite{MultistabilityApalachong} has Lipschitz continuous boundary conditions. Hence the {\it a posteriori} error analysis of this paper is applicable to this example and the results are illustrated in Section \ref{Numerical results for a priori error analysis}.
}	
\end{rem}
			 The proof of Theorem \ref{reliability and efficiency}, stated in Subsection \ref{Main results}, is presented in this section. An abstract estimate for the case of non-homogeneous boundary conditions and quartic nonlinearity is derived modifying the methodology in \cite{Gaurang_semilinear, Verfurth} first and this result is crucial to prove Theorem \ref{reliability and efficiency}.
			 \begin{thm}({An abstract estimate})\label{thm 4.1}
			 		Let $\Psi $ be a regular solution to  \eqref{continuous nonlinear} and $\Psi_{\mathbf{g}} \in \boldsymbol{\mathcal{{X}}}$. 	 Then, $DN$ is locally Lipschitz continuous at $\Psi$, that is given $R_0>0$, $DN$ restricted to $B(\Psi,R_0)$ is Lipschitz continuous. Moreover, (a) $\displaystyle \gamma:= \sup_{\boldsymbol \eta \in B(\Psi, R_0)} \dfrac{\vertiii{DN(\boldsymbol \eta) - DN(\Psi)}_{\mathcal{L}(\X, \V^*)}}{\vertiii{\boldsymbol \eta- \Psi}_{h}} < \infty,$
			 	and (b) there exists a constant $R>0$ such that for all ${\boldsymbol \eta}_{h} \in B(\Psi, R)$,
			 		\begin{align} \label{thm_4.4}
			 		\vertiii{\Psi- {\boldsymbol \eta}_h}_h \lesssim \vertiii{N({\boldsymbol \eta}_h)}_{\V^*}  +(1+\vertiii{DN({\boldsymbol \eta}_h)}_{\mathcal{L}(\X, \V^*)})\vertiii{\Psi_{\mathbf{g}} - {\boldsymbol \eta}_h}_h,
			 		\end{align}
			 	where the constant in $"\lesssim"$ depends on $\gamma$, continuous inf-sup constant  $\beta$ and Poincar\'e constant $C_P$,
			 	and the nonlinear (resp. linearized) operator $N(\cdot)$ (resp. $DN(\cdot)$) is defined in \eqref{continuous nonlinear} (resp. \eqref{2.9}).
			 \end{thm}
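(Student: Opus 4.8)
The plan is to combine the continuous inf-sup stability \eqref{2.9} of the linearised operator $DN(\Psi)$ on $\V$ with a first-order Taylor expansion of the cubic map $N$ about the regular solution $\Psi$, using the lift $\Psi_{\mathbf{g}}\in\boldsymbol{\mathcal{X}}$ to reconcile the mismatch between the Dirichlet data carried by $\Psi$ and the argument $\boldsymbol\eta_h$, which in general does not belong to the affine manifold $\boldsymbol{\mathcal{X}}$.

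First I would dispose of the Lipschitz property and of (a). Since $N(\cdot\,;\Phi)=A(\cdot,\Phi)+B(\cdot,\cdot,\cdot,\Phi)+C(\cdot,\Phi)$ is a cubic polynomial map, $\dual{DN(\boldsymbol\eta)\Theta,\Phi}=A(\Theta,\Phi)+3B(\boldsymbol\eta,\boldsymbol\eta,\Theta,\Phi)+C(\Theta,\Phi)$ depends quadratically on $\boldsymbol\eta$, and multilinearity of $B$ in its first two arguments gives, for $\boldsymbol\eta_1,\boldsymbol\eta_2\in\X$,
\[
\dual{(DN(\boldsymbol\eta_1)-DN(\boldsymbol\eta_2))\Theta,\Phi}=3B(\boldsymbol\eta_1-\boldsymbol\eta_2,\boldsymbol\eta_1,\Theta,\Phi)+3B(\boldsymbol\eta_2,\boldsymbol\eta_1-\boldsymbol\eta_2,\Theta,\Phi).
\]
The boundedness of $B$ on $\X$ in Lemma \ref{boundedness}$(iv)$, together with the Poincar\'e--Friedrichs equivalence of $\vertiii{\cdot}_h$ and $\vertiii{\cdot}_1$ from Lemma \ref{Poincare type inequality}, then yields $\vertiii{DN(\boldsymbol\eta_1)-DN(\boldsymbol\eta_2)}_{\mathcal{L}(\X,\V^*)}\lesssim\epsilon^{-2}\big(\vertiii{\boldsymbol\eta_1}_h+\vertiii{\boldsymbol\eta_2}_h\big)\vertiii{\boldsymbol\eta_1-\boldsymbol\eta_2}_h$; on $B(\Psi,R_0)$ the prefactor is bounded by $\epsilon^{-2}\big(2\vertiii{\Psi}_h+2R_0\big)$, which shows that $DN$ is Lipschitz on $B(\Psi,R_0)$ and, on taking $\boldsymbol\eta_2=\Psi$, that $\gamma<\infty$.

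For (b), fix $\boldsymbol\eta_h\in B(\Psi,R)$ with $R\le R_0$ to be chosen below. Since $\Psi,\Psi_{\mathbf{g}}\in\boldsymbol{\mathcal{X}}$, the difference $\Psi-\Psi_{\mathbf{g}}$ lies in $\V$, hence by \eqref{2.9} there is $\Phi\in\V$ with $\vertiii{\Phi}_1=1$ and $\beta\vertiii{\Psi-\Psi_{\mathbf{g}}}_1\le\dual{DN(\Psi)(\Psi-\Psi_{\mathbf{g}}),\Phi}$. Splitting $\Psi-\Psi_{\mathbf{g}}=(\Psi-\boldsymbol\eta_h)+(\boldsymbol\eta_h-\Psi_{\mathbf{g}})$, the second contribution is controlled by $(1+\vertiii{DN(\boldsymbol\eta_h)}_{\mathcal{L}(\X,\V^*)})\vertiii{\boldsymbol\eta_h-\Psi_{\mathbf{g}}}_h$ up to a constant, using $\vertiii{DN(\Psi)}_{\mathcal{L}(\X,\V^*)}\le\vertiii{DN(\boldsymbol\eta_h)}_{\mathcal{L}(\X,\V^*)}+\gamma R$. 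For the first contribution, the fundamental theorem of calculus applied to $t\mapsto N(\Psi+t(\boldsymbol\eta_h-\Psi);\Phi)$ together with $N(\Psi;\Phi)=0$ gives
\[
\dual{DN(\Psi)(\Psi-\boldsymbol\eta_h),\Phi}=-N(\boldsymbol\eta_h;\Phi)+\int_0^1\dual{\big(DN(\Psi+t(\boldsymbol\eta_h-\Psi))-DN(\Psi)\big)(\boldsymbol\eta_h-\Psi),\Phi}\,\dt ,
\]
whose modulus is at most $\vertiii{N(\boldsymbol\eta_h)}_{\V^*}+\tfrac{\gamma}{2}\vertiii{\boldsymbol\eta_h-\Psi}_h^2$ by (a) and $\vertiii{\Phi}_1=1$. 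Estimating $\vertiii{\boldsymbol\eta_h-\Psi}_h^2\le\vertiii{\boldsymbol\eta_h-\Psi}_h\big(\vertiii{\boldsymbol\eta_h-\Psi_{\mathbf{g}}}_h+\vertiii{\Psi-\Psi_{\mathbf{g}}}_1\big)$ and $\vertiii{\boldsymbol\eta_h-\Psi}_h\le R$, the term $\tfrac{\gamma R}{2}\vertiii{\Psi-\Psi_{\mathbf{g}}}_1$ can be absorbed into the left-hand side once $R\le\beta/\gamma$, the remainder being $\lesssim R\,\vertiii{\boldsymbol\eta_h-\Psi_{\mathbf{g}}}_h$. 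Collecting all terms gives $\vertiii{\Psi-\Psi_{\mathbf{g}}}_1\lesssim\vertiii{N(\boldsymbol\eta_h)}_{\V^*}+(1+\vertiii{DN(\boldsymbol\eta_h)}_{\mathcal{L}(\X,\V^*)})\vertiii{\boldsymbol\eta_h-\Psi_{\mathbf{g}}}_h$, and a final triangle inequality $\vertiii{\Psi-\boldsymbol\eta_h}_h\le\vertiii{\Psi-\Psi_{\mathbf{g}}}_h+\vertiii{\Psi_{\mathbf{g}}-\boldsymbol\eta_h}_h$ together with $\vertiii{\Psi-\Psi_{\mathbf{g}}}_h\lesssim\vertiii{\Psi-\Psi_{\mathbf{g}}}_1$ yields \eqref{thm_4.4} with $R:=\min\{R_0,\beta/\gamma\}$ and an implied constant depending only on $\gamma$, $\beta$ and $C_P$.

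The main obstacle is not the (routine) smoothness analysis of the cubic $N$, but the bookkeeping forced by $\boldsymbol\eta_h\notin\boldsymbol{\mathcal{X}}$: one has to route the estimate through the lift $\Psi_{\mathbf{g}}$, keep careful track of which of the norms $\vertiii{\cdot}_h$, $\vertiii{\cdot}_1$ is active at each step (these differ on $\X$ but are related by the Poincar\'e--Friedrichs constant $C_P$), and perform the absorption so that the radius $R$ depends only on $\Psi$ (through $\gamma$) and on $\beta$, and not on $\boldsymbol\eta_h$ or on the particular lift $\Psi_{\mathbf{g}}$.
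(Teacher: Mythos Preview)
Your proof is correct and follows essentially the same route as the paper: local Lipschitz continuity from the quadratic dependence of $DN$ on $\boldsymbol\eta$ via Lemma~\ref{boundedness}$(iv)$, then the continuous inf-sup \eqref{2.9} applied to $\Psi-\Psi_{\mathbf g}\in\V$, a first-order Taylor expansion of $N$ about $\Psi$, and an absorption argument to fix $R$. The only cosmetic differences are that the paper carries a $(\beta-\delta)$ and lets $\delta\to 0$ rather than attaining the supremum in \eqref{2.9} directly, and it absorbs the quadratic term $\vertiii{\Psi-\boldsymbol\eta_h}_h^2$ straight into the left-hand side (choosing $R=\min\{R_0,C_4/2\}$) instead of first splitting via $\Psi_{\mathbf g}$ as you do; both variants lead to the same estimate with constants depending only on $\beta$, $\gamma$, and $C_P$.
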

	\begin{proof}		
				In the {\it first step}, it is established that $DN$ is locally Lipschitz continuous at $\Psi$ and $\gamma <\infty$.
Let  $R_0>0$ be given and $\boldsymbol \eta \!\in\! B(\Psi, R_0).$	For $\Theta \!\in\!\X$ and $\Phi \!\in\! \V,$ the definition of $DN(\cdot)$, $B(\cdot, \cdot, \cdot, \cdot)$, a re-grouping of terms and Lemma \ref{2.3.17}$(iv)$ leads to 
				\begin{align} \label{DN bound}
				&\dual{DN(\boldsymbol \eta) \Theta, \Phi} - \dual{DN(\Psi) \Theta, \Phi}= 3B(\boldsymbol \eta,\boldsymbol \eta, \Theta, \Phi) - 3	B(\Psi,\Psi, \Theta, \Phi) \notag\\& = 2\epsilon^{-2} \int_{\Omega} ((\boldsymbol \eta -\Psi)\cdot ( \boldsymbol \eta+\Psi)(\Theta \cdot \Phi)+ 2 (\boldsymbol \eta -\Psi) \cdot \Theta (\boldsymbol \eta \cdot \Phi) +  2 (\Psi \cdot \Theta) (\boldsymbol \eta -\Psi) \cdot \Phi ) \dx \notag \\&
				\lesssim \epsilon^{-2} \vertiii{\boldsymbol \eta -\Psi}_{1}(R_0 + \vertiii{\Psi}_1) \vertiii{\Theta}_1 \vertiii{\Phi}_1.
				\end{align}
				The above displayed inequality  with definition of $\vertiii{DN(\boldsymbol \eta) - DN(\Psi)}_{\mathcal{L}(\X, \V^*)}$ leads to the Lipschitz continuity. This and Lemma \ref{Poincare type inequality} concludes the proof of the first step.
 
				\medskip
\noindent  {\it Step two} establishes  \eqref{thm_4.4}. The continuous formulation \eqref{continuous nonlinear}  and a Taylor expansion lead to  
	\begin{align*}
	0=N(\Psi;\Phi)= N({\boldsymbol \eta}_h; \Phi) + \duall{\int_{0}^{1}DN(\Psi + t( {\boldsymbol \eta}_h-\Psi ))(\Psi - {\boldsymbol \eta}_h)\dt, \Phi}. 
	\end{align*}
	Introduce  $\pm \dual{DN(\Psi)(\Psi - {\boldsymbol \eta}_h), \Phi} $ in the above displayed expression and rearrange the terms  to obtain
	\begin{align}\label{4.4}
	\dual{DN(\Psi)(\Psi - {\boldsymbol \eta}_h), \Phi}= -N({\boldsymbol \eta}_h; \Phi) - \duall{\int_{0}^{1}(DN(\Psi + t({\boldsymbol \eta}_h-\Psi )) -DN(\Psi) )(\Psi- {\boldsymbol \eta}_h )\dt, \Phi} .
	\end{align}
	Rewrite $\Psi - {\boldsymbol \eta}_h$  as $ (\Psi - \Psi_{\mathbf{g}})+ (\Psi_{\mathbf{g}} - {\boldsymbol \eta}_h )$ in the left-hand side of the above term, use linearity of $\dual{DN(\Psi)\cdot, \cdot}$, introduce $\pm \dual{DN({\boldsymbol \eta}_h)(\Psi_{\mathbf{g}} - {\boldsymbol \eta}_h), \Phi}$ in the first step; and bound in the second step below to obtain
	\begin{align}\label{2.4}
	&\dual{DN(\Psi)(\Psi - \Psi_{\mathbf{g}}), \Phi}= - N({\boldsymbol \eta}_h;\Phi) +\dual{(DN({\boldsymbol \eta}_h)-DN(\Psi))(\Psi_{\mathbf{g}} - {\boldsymbol \eta}_h), \Phi} - \dual{DN({\boldsymbol \eta}_h)(\Psi_{\mathbf{g}} - {\boldsymbol \eta}_h), \Phi}\notag\\&\quad - \duall{\int_{0}^{1}( DN(\Psi + t( {\boldsymbol \eta}_h-\Psi )) - DN(\Psi) )(\Psi - {\boldsymbol \eta}_h)\dt, \Phi} \notag\\&\lesssim \big(\vertiii{N({\boldsymbol \eta}_h)}_{\V^*} + \vertiii{DN({\boldsymbol \eta}_h)-DN(\Psi)}_{\mathcal{L}(\X, \V^*)}\vertiii{\Psi_{\mathbf{g}} - {\boldsymbol \eta}_h}_1 + \vertiii{DN({\boldsymbol \eta}_h)}_{\mathcal{L}(\X, \V^*)}\vertiii{\Psi_{\mathbf{g}} - {\boldsymbol \eta}_h}_1 \notag\\& \quad + \int_{0}^{1} \vertiii{( DN(\Psi + t( {\boldsymbol \eta}_h-\Psi ))-DN(\Psi) )}_{\mathcal{L}(\X, \V^*)}\vertiii{\Psi - {\boldsymbol \eta}_h}_1 \dt \big)\vertiii{\Phi}_1. 
	\end{align}	
	Since $\Psi_{\mathbf{g}} \!\in\! \boldsymbol{\mathcal{{X}}}$, $\Psi - \Psi_{\mathbf{g}}\!\in\! \V.$ 
			For $\delta >0 $ small enough, the 
			 continuous inf-sup condition 
			\eqref{2.9} implies that 
			there exists $\Phi \!\in\! \V$ with $\vertiii{\Phi}_1 = 1$ such that $\displaystyle 	(\beta - \delta)\vertiii{ \Psi - \Psi_{\mathbf{g}}}_1 \leq \dual{DN(\Psi)(\Psi - \Psi_{\mathbf{g}}), \Phi}.$
			A triangle inequality, $\Psi - \Psi_{\mathbf{g}}=0$ on $\partial \Omega$  
	and	the last displayed inequality yield
			\begin{align*} 
			(\beta - \delta)	\vertiii{\Psi - {\boldsymbol \eta}_h}_h  \leq (\beta - \delta)(\vertiii{\Psi - \Psi_{\mathbf{g}}}_1 + \vertiii{ \Psi_{\mathbf{g}} - {\boldsymbol \eta}_h}_h) \lesssim  \dual{DN(\Psi)(\Psi - \Psi_{\mathbf{g}}), \Phi} + (\beta - \delta)\vertiii{ \Psi_{\mathbf{g}} - {\boldsymbol \eta}_h}_h. 
			\end{align*}
			Take $\delta \rightarrow 0$ to obtain $\displaystyle 	\beta \vertiii{\Psi - {\boldsymbol \eta}_h}_h  \lesssim  \dual{DN(\Psi)(\Psi - \Psi_{\mathbf{g}}), \Phi} + \beta \vertiii{ \Psi_{\mathbf{g}} - {\boldsymbol \eta}_h}_h. $
			A combination of \eqref{2.4}, the last displayed inequality and the definition of $\gamma$ plus Lemma \ref{Poincare type inequality} for $\vertiii{\Phi}_1=1$ leads to
			\begin{align*}
			C_4 \vertiii{\Psi- {\boldsymbol \eta}_h}_h \leq   \vertiii{N({\boldsymbol \eta}_h)}_{\V^*} + (1+\vertiii{\Psi - {\boldsymbol \eta}_h}_h +\vertiii{DN({\boldsymbol \eta}_h)}_{\mathcal{L}(\X, \V^*)})\vertiii{\Psi_{\mathbf{g}} - {\boldsymbol \eta}_h}_h +  \vertiii{\Psi - {\boldsymbol \eta}_h}^2_h,
			\end{align*}			
			where the constant $C_4$ depends on $\beta$, $\gamma$ and $ C_P. $
				For a choice of $R := \min\{{R_0,  C_4/2}\}$, use  $\vertiii{\Psi - {\boldsymbol \eta}_h}_h < C_4/2$ and $\vertiii{\Psi - {\boldsymbol \eta}_h}^2_h <C_4/2 \vertiii{\Psi - {\boldsymbol \eta}_h}_h $ in the second and third terms, respectively, in the right-hand side of the above inequality to obtain $\displaystyle C_4/2 \vertiii{\Psi- {\boldsymbol \eta}_h}_h \leq   \vertiii{N({\boldsymbol \eta}_h)}_{\V^*} + (1+C_4/2+\vertiii{DN({\boldsymbol \eta}_h)}_{\mathcal{L}(\X, \V^*)} )\vertiii{\Psi_{\mathbf{g}} - {\boldsymbol \eta}_h}_h,$
		 and this leads to the desired conclusion.
		\end{proof}	

		\noindent Next, the main result of this section is proved in the following text.
		\begin{proof}[\textbf{Proof of Theorem  \ref{reliability and efficiency}}]
			Theorem \ref{energy and $L^2$ norm error estimate} guarantees the existence of $R>0$ such that \eqref{thm_4.4} holds for a choice of $\boldsymbol{ \eta}_h=\Psi_h.$ Choose $\Psi_{\mathbf{g}}$ as in Lemma \ref{psi-psi_g estimate}. 
	\textit{A posteriori} reliability (resp. efficiency) estimate provides an upper bound (resp. lower bound) on the discretization error, up to a constant. 
	
	\medskip
	
	\noindent To establish the reliability,	Theorem \ref{thm 4.1} is utilized  and the term $	\vertiii{N(\Psi_h)}_{\V^*}$ is estimated first. 	Since $\V $ is a Hilbert space, there exists a $\Phi \!\in\! \V$ with $\vertiii{\Phi}_1= 1$ such that 
			\begin{align}\label{4.8.1}
			\vertiii{N(\Psi_h)}_{\V^*}= N(\Psi_h; \Phi)= N(\Psi_h; \Phi-{\textrm{I}^{SZ}_h} \Phi)+N(\Psi_h; \textrm{I}^{SZ}_h\Phi),
			\end{align}	
			where $\textrm{I}^{SZ}_h:\V \rightarrow \V_h$ is the Scott-Zhang interpolation in Lemma \ref{Scott-Zhang interpolation}. The second term in \eqref{4.8.1} can be rewritten using \eqref{discrete nonlinear problem} with test function $\textrm{I}^{SZ}_h\Phi$  (that vanishes on $\partial \Omega$) as
			\begin{align}\label{4.7}
			N(\Psi_h; \textrm{I}^{SZ}_h\Phi) = \dual{\Psi_h- \mathbf{g},\nabla (\textrm{I}^{SZ}_h\Phi) \nu }_{\partial \Omega}  &\lesssim \big(\sum_{E \in \mathcal{E}_h^{\partial}}    \frac{\sigma}{h_E} \vertiii{\Psi_h - \mathbf{g}}_{0,E}^2 \big)^{\frac{1}{2}}  \big(\sum_{E \in \mathcal{E}_h^{\partial}} \frac{h_E}{\sigma} \vertiii{\nabla(\textrm{I}^{SZ}_h\Phi) \nu_E}_{0,E}^2 \big)^{\frac{1}{2}}  \notag \\
			&  \lesssim  \big(\sum_{E \in \mathcal{E}_h^{\partial}}    \frac{\sigma}{h_E} \vertiii{\Psi_h - \mathbf{g}}_{0,E}^2 \big)^{\frac{1}{2}} \vertiii{\Phi}_1 = (\sum_{E \in \mathcal{E}_h^{\partial}} (\vartheta_E^{\partial})^2   )^{{1}/{2}},
			\end{align}
			where for $\vertiii{\Phi}_1=1$, a Cauchy-Schwarz inequality, Lemmas \ref{discrete trace inequality}($ii$) and \ref{Scott-Zhang interpolation} are utilized in the second and third steps.
			
			\smallskip
\noindent Apply integration by parts element-wise for $A(\Psi_h, \Phi-\textrm{I}^{SZ}_h\Phi) $ in the expression of $N(\Psi_h; \Phi-\textrm{I}^{SZ}_h \Phi)$, use $[\Phi-\textrm{I}^{SZ}_h \Phi]_E=0$ on $E \!\in\! \mathcal{E}_h^i$,  $\Phi-\textrm{I}^{SZ}_h \Phi = 0$ on $\partial \Omega$, $\Delta \Psi_{h}=0$ and recall the definition of the local terms $\boldsymbol{ \eta}_T := (2\epsilon^{-2}(\abs{\Psi_{h}}^2 -1)\Psi_{h})|_T$   defined on a triangle $T\!\in\! \mathcal{T}$ and $\boldsymbol{ \eta}_E := [\nabla \Psi_h \nu_E]_E$ on the edge $E$ of $T$. 			
	For $\vertiii{\Phi}_1=1$,	the above arguments, Cauchy-Schwarz inequality and  Lemma \ref{Scott-Zhang interpolation} lead to
			\begin{align} \label{4.10}
		&	N(\Psi_h; \Phi-\textrm{I}^{SZ}_h\Phi) =  	A(\Psi_h, \Phi-\textrm{I}^{SZ}_h\Phi)+ B(\Psi_h,\Psi_h,\Psi_h,  \Phi-\textrm{I}^{SZ}_h \Phi)+ C(\Psi_h,  \Phi-\textrm{I}^{SZ}_h \Phi)
			\notag \\  
			&= \sum_{T \in \mathcal{T}} \int_T \boldsymbol{ \eta}_T \cdot (\Phi-\textrm{I}^{SZ}_h \Phi) \dx +\sum_{E \in \mathcal{E}_h^i} \dual{ \boldsymbol{ \eta}_E, \Phi-\textrm{I}^{SZ}_h \Phi}_E 
			 \lesssim \big(\sum_{T \in \mathcal{T}} \vartheta_T^2+\sum_{E \in \mathcal{E}_h^i} (\vartheta_E^{i})^2 \big)^{\frac{1}{2}} \notag
			 \\&\quad \times \big(\sum_{T \in \mathcal{T}} h_T^{-2} \vertiii{\Phi-\textrm{I}^{SZ}_h \Phi}^2_{0,T}+\sum_{E \in \mathcal{E}_h^i} h_E^{-1} \vertiii{\Phi-\textrm{I}^{SZ}_h \Phi}_{0,E}^2 \big)^{\frac{1}{2}}
			 \lesssim \big(\sum_{T \in \mathcal{T}} \vartheta_T^2+\sum_{E \in \mathcal{E}_h^i} (\vartheta_E^{i})^2 \big)^{\frac{1}{2}},			
			\end{align}
		where $\vartheta_T^2 = h_T^2 \vertiii{2\epsilon^{-2}  (\abs{\Psi_h}^2-1)\Psi_h}^2_{0,T},$ and  $(\vartheta_E^i)^2 = h_E \vertiii{[\nabla \Psi_h \nu_E]_E}_{0,E}^2 \text{ for all } E \in \mathcal{E}_h^{i}$.	
	A use of \eqref{4.7}, \eqref{4.10} in \eqref{4.8.1} leads to the estimate of $\vertiii{N(\Psi_h)}_{\V^*}.$
	
	\medskip
\noindent The definition of $\vertiiih{\cdot}$ and Lemma \ref{psi-psi_g estimate}  yield
$$\vertiii{ \Psi_{\mathbf{g}} - \Psi_h}^2_h = \sum_{ T \in \mathcal{T}} \vertiii{\nabla (\Psi_{\mathbf{g}} - \Psi_{h})}_{0, T}^2 + \sum_{ E \in \mathcal{E}_h^{\partial} } \frac{\sigma}{h_E} \vertiii{\Psi_{h}- \mathbf{g} }^2_{0, E} \\ \lesssim  (\vartheta_{hot}^{\partial})^2+\sum_{ E \in \mathcal{E}_h^{\partial} }(\vartheta_E^{\partial })^2,$$ where $
(\vartheta_{hot}^{\partial})^2:= \sum_{ E \in \mathcal{E}_h^{\partial} }h_E^{-1} \vertiii{ \mathbf{g} - \mathbf{g}_h}_{0,E}^2 + h_E \vertiii{\nabla(\mathbf{g} - \mathbf{g}_h)}_{0,E}^2
$ (see Lemma  \ref{psi-psi_g estimate}). This leads to the bound for the second term in \eqref{thm_4.4} by  {\it higher order terms} (h.o.t.) \cite{Carsteen_higher_o_t}  that consist of (i) the errors arising due to the polynomial approximation of the boundary data $\mathbf{g}$ that  depends on the given data smoothness and 
(ii) the terms $\vertiii{DN(\Psi_h)}_{\mathcal{L}(\X, \V^*)} \vertiiih{\Psi_{\mathbf{g}}- \Psi_{h}}$. 
%
\medskip

	\noindent 	To establish the efficiency estimate, set $\Phi_h =  \Psi_h$ and $\boldsymbol{ \eta}_T = (2\epsilon^{-2}(\abs{\Psi_{h}}^2 -1)\Psi_{h})|_T $ on a triangle $T$ and $	\boldsymbol{ \eta}_E=[\nabla \Psi_h \nu_E]_E$ on a edge $E$ in Lemma  \ref{lem 3.1}. A use of the local efficiency estimates in Lemma \ref{lem 3.1}(i)  and $\displaystyle 	\sum_{ E \in \mathcal{E}^{\partial}_h} (	\vartheta_E^{\partial })^2=\sum_{ E \in \mathcal{E}^{\partial}_h} \frac{1}{h_E} \vertiii{\Psi_{h} - \mathbf{g}}_{0,E}^2 \leq \vertiii{\Psi-\Psi_{h}}_{h}^2$
			 establishes the lower bound in Theorem \ref{reliability and efficiency}.
		\end{proof}
	\begin{rem}
	 For $X_h= \{v_h \in {C}^0(\overline{\Omega}), v_h|_T \in P_p(T), \text{ for all } T \in \mathcal{T}\}$, that is, if we use higher order polynomials for the approximation, then $\vartheta_T^2 =  h_T^2 \vertiii{-\Delta \Psi_h+2\epsilon^{-2}  (\abs{\Psi_h}^2-1)\Psi_h}^2_{0,T}$ in \eqref{estimator_volume_interior_edge}.
	\end{rem}
	\section{Extension to discontinuous Galerkin FEM} \label{Extension to dG method}
	In this section, we extend the results  in Section \ref{Main results} to dGFEM.
	\medskip
	
\noindent The discrete space for dGFEM  consists of piecewise linear polynomials defined by $$\displaystyle X_{\dg}:=\{v \in L^2(\Omega): v|_T \in P_1(T) \text{ for all } T \in \mathcal{T}\},$$ 
and the mesh dependent norm 
$ \displaystyle \norm{v}^2_{\dg}:
=\sum_{ T \in \mathcal{T}} \int_T \abs{ \nabla v}^2 \dx + \sum_{E \in \mathcal{E}}  \frac{\sigma_\dg}{h_E} \int_{E} [v]_E^2 \ds,$
where $\sigma_\dg > 0$ is the penalty parameter. Let $\X_{\dg}\!: =X_{\dg} \times X_{\dg} $ be equipped with the product norm defined by $\vertiiidg{\Phi_\dg}^2= \norm{\varphi_1}^2_\dg +  \norm{\varphi_2}^2_\dg$ for all  $\Phi_\dg=(\varphi_1,\varphi_2) \in \X_{\dg}$. 
The dGFEM formulation corresponding to \eqref{continuous nonlinear} seeks $ \Psi_{\dg}\!\in\!\X_{\dg}$ such that for all $ \Phi_{\dg} \!\in\! \X_{\dg},$
\begin{align}\label{discrete nonlinear problem dG}
N_{\dg}(\Psi_\dg;\Phi_\dg):=A_{\dg}(\Psi_{\dg},\Phi_{\dg})+B(\Psi_{\dg},\Psi_{\dg},\Psi_{\dg},\Phi_{\dg})+C(\Psi_{\dg},\Phi_{\dg})-L_\dg(\Phi_{\dg})=0, 
\end{align}
where for $ \Theta=(\theta_1,\theta_2)  ,\, \Phi=(\varphi_1,\varphi_2)
\in \h^1(\mathcal{T}) $,  $A_{\dg}(\Theta,\Phi) :=a_{\dg}(\theta_1,\varphi_1)+a_{\dg}(\theta_2,\varphi_2),  
$  $L_\dg(\Phi_{\dg})=l^1_\dg(\varphi_1)+l^2_\dg(\varphi_2)$,
and for 
$\theta, \varphi \in H^1(\mathcal{T})$,  and for $-1 \leq \lambda \leq 1,$ 
\begin{align*}
&a_{\dg}(\theta,\varphi):=\sum_{ T \in \mathcal{T}} \int_T \nabla \theta \cdot \nabla \varphi \dx- \sum_{E \in \mathcal{E}} \dual{\{\frac{\partial \theta}{\partial \nu_E}\}_E,  [\varphi]_E}_E-\lambda\sum_{E \in \mathcal{E}} \dual{\{\frac{\partial \varphi }{\partial \nu_E}\}_E,  [\theta]_E}_E+ \sum_{E \in \mathcal{E}} \frac{\sigma_\dg}{h_E}  \dual{[\theta]_E, [\varphi]_E }_E\\&
	\text{  and }\,\,
l^i_\dg(\varphi):=
-  \sum_{E \in  \mathcal{E}_h^{\partial}}\dual{  g_i,  \frac{\partial \varphi}{\partial \nu_E}  }_{E} + \sum_{E \in  \mathcal{E}_h^{\partial}} \frac{\sigma_\dg}{h_E} \dual{g_i, \varphi }_E \text{ for } 1\leq i \leq 2. 
\end{align*}
The operators $B(\cdot,\cdot,\cdot,\cdot)$ and $ C(\cdot,\cdot)$ are as defined in Section \ref{weak formulation}. 

\medskip \noindent The proofs of results in this section follow on similar lines to the  results established in Sections \ref{A priori error estimate} and \ref{A posteriori error estimate} for the Nitsche's method. Hence the main resuts and the auxiliary results needed to establish them are stated and  parts of  proofs where ideas differ are highlighted.
\begin{lem}\label{2.3.17 dG}  {(Boundedness and coercivity of $A_{\rm dG}$)}\cite{ Prudhomme} \label{boundedness dG}
 For the choice of a sufficiently large parameter $\sigma_\dg$, there exists a 
	positive constant $\alpha_2 > 0$ such that 
	for	$\Theta_{\dg}, \Phi_{\dg} \!\in\! \X_\dg$,
	\begin{align*}
	A_{\dg}(\Theta_{\dg},\Phi_{\dg})\lesssim \vertiii{\Theta_{\dg}}_{\dg} \vertiii{\Phi_{\dg}}_{\dg}, \text{ and }  A_{\dg}(\Phi_{\dg}, \Phi_{\dg})  \geq \alpha_2 \vertiii{\Phi_{\dg}}_{\dg}^2, 
	\end{align*}
	where the hidden constant in $"\lesssim"$ is independent of $h.$
\end{lem}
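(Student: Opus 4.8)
The plan is to prove Lemma~\ref{2.3.17 dG} following the classical route for symmetric/nonsymmetric interior penalty bilinear forms, adapted to the vector-valued, product-space setting of this paper. Since $A_{\dg}(\Theta_\dg,\Phi_\dg)=a_{\dg}(\theta_1,\varphi_1)+a_{\dg}(\theta_2,\varphi_2)$ and $\vertiiidg{\cdot}^2$ is the sum of the component norms, it suffices to establish both estimates for the scalar form $a_{\dg}(\cdot,\cdot)$ on $X_\dg$; the vector claims then follow by Cauchy--Schwarz in $\mathbb{R}^2$. So the real content is: for $v,w\in X_\dg$, $a_{\dg}(v,w)\lesssim \norm{v}_\dg\norm{w}_\dg$ and $a_{\dg}(v,v)\ge \alpha_2\norm{v}_\dg^2$ for $\sigma_\dg$ large enough.

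For boundedness, I would split $a_{\dg}(v,w)$ into its four pieces: the broken gradient term, the two consistency/adjoint-consistency terms involving $\dual{\{\partial_{\nu_E}v\}_E,[w]_E}_E$ (and the $\lambda$-twin), and the penalty term. The broken-gradient and penalty terms are immediately bounded by $\norm{v}_\dg\norm{w}_\dg$ via Cauchy--Schwarz. The crossed terms are the only ones needing work: by Cauchy--Schwarz on each edge and then summing, $\sum_E\dual{\{\partial_{\nu_E}v\}_E,[w]_E}_E \le \big(\sum_E h_E\norm{\{\partial_{\nu_E}v\}_E}_{0,E}^2\big)^{1/2}\big(\sum_E h_E^{-1}\norm{[w]_E}_{0,E}^2\big)^{1/2}$; the second factor is controlled by $\sigma_\dg^{-1/2}\norm{w}_\dg$, and for the first factor I would invoke the discrete trace inequality (Lemma~\ref{discrete trace inequality}(ii), the local inverse-trace estimate $h_E\norm{\partial_{\nu_E}v}_{0,E}^2\lesssim\norm{\nabla v}_{0,T}^2$ on the triangles meeting $E$, applied here to the average) to bound it by $\big(\sum_T\norm{\nabla v}_{0,T}^2\big)^{1/2}\le\norm{v}_\dg$. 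This gives the boundedness constant, depending on $\lambda$ and (mildly) on $\sigma_\dg$.

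For coercivity, set $w=v$. Then $a_{\dg}(v,v)=\sum_T\norm{\nabla v}_{0,T}^2 - (1+\lambda)\sum_E\dual{\{\partial_{\nu_E}v\}_E,[v]_E}_E + \sum_E\frac{\sigma_\dg}{h_E}\norm{[v]_E}_{0,E}^2$. Bound the middle term exactly as above but now with a Young's inequality with parameter $\delta$: $(1+\lambda)\sum_E\dual{\{\partial_{\nu_E}v\}_E,[v]_E}_E \le \frac{\delta}{2}\sum_E h_E\norm{\{\partial_{\nu_E}v\}_E}_{0,E}^2 + \frac{1+\lambda)^2}{2\delta}\sum_E h_E^{-1}\norm{[v]_E}_{0,E}^2 \le \frac{\delta C_{\mathrm{tr}}}{2}\sum_T\norm{\nabla v}_{0,T}^2 + \frac{(1+\lambda)^2}{2\delta}\sum_E h_E^{-1}\norm{[v]_E}_{0,E}^2$, where $C_{\mathrm{tr}}$ is the discrete-trace constant and I have used that each triangle has at most three edges. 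Choosing $\delta$ so that $\delta C_{\mathrm{tr}}/2 = 1/2$ (i.e. $\delta = 1/C_{\mathrm{tr}}$) absorbs half the broken-gradient term, and then requiring $\sigma_\dg > (1+\lambda)^2 C_{\mathrm{tr}}/2 + 1$ (say) leaves a positive multiple of the penalty term as well, yielding $a_{\dg}(v,v)\ge\alpha_2\norm{v}_\dg^2$ with $\alpha_2 = \min\{1/2,\ 1 - \tfrac{(1+\lambda)^2C_{\mathrm{tr}}}{2\sigma_\dg}\}$, which is $h$-independent.

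The main obstacle — really the only subtlety — is the dependence on the penalty parameter in the coercivity step: one must make the ``sufficiently large $\sigma_\dg$'' quantitative enough that the constant $C_{\mathrm{tr}}$ from the discrete trace inequality (and the factor $(1+\lambda)$, which is why the hypothesis $-1\le\lambda\le1$ matters — it keeps this bounded) is correctly tracked, and verify that $C_{\mathrm{tr}}$ is genuinely mesh-independent on the shape-regular family $\mathcal{T}$, which is exactly the content of Lemma~\ref{discrete trace inequality}. Everything else is bookkeeping: reducing to the scalar form, splitting into the four terms, and two applications of Cauchy--Schwarz/Young. I would present it as a short proof citing Lemma~\ref{discrete trace inequality} for the trace bound and remarking that this is the standard DG coercivity argument (cf.~\cite{Prudhomme}), specialised to the product space.
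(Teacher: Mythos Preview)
Your proposal is correct and is exactly the standard interior-penalty argument that the cited reference \cite{Prudhomme} contains; the paper itself does not give a proof of this lemma but simply states it with that citation, so there is nothing to compare against beyond noting that your argument is the classical one the citation points to. One small remark: Lemma~\ref{discrete trace inequality}(ii) as stated in the paper is for boundary edges and the conforming space $\X_h$, so when you invoke it you should note (as is routine) that the same local inverse-trace estimate holds element-wise for all edges and for piecewise polynomials in $X_\dg$.
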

	\begin{lem}\emph{(Interpolation estimate)}\cite{Prudhomme_Report} \label{Interpolation estimate dG}
	For ${\rm v} \in H^s(\Omega) \text{ with } s \geq 1$, there exists  ${\rm{I}_{\rm dG}v} \in X_{\rm dG}$ such that for any $T \in \mathcal{T},$ $\displaystyle \norm{\rm v-\rm{I}_{\rm dG}v }_{H^l(T)} \leq C_I h_T ^{s-l} \norm{\rm v}_{H^s(T)}$
	for $l=0,1$ where $C_I$ denotes  a generic interpolation constant  independent of $h$..
\end{lem}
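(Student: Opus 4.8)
The plan is to exploit the fact that $X_{\dg}$, unlike the conforming space $X_h$, imposes no continuity across inter-element edges, so the interpolant can be built triangle by triangle. Fix $T \in \mathcal{T}$ and let $Q_T : L^2(T) \to P_1(T)$ be the $L^2(T)$-orthogonal projection onto affine polynomials; set $({\rm I_{dG}v})|_T := Q_T({\rm v}|_T)$. Since ${\rm v} \in H^s(\Omega)$ with $s \ge 1$ restricts to $H^s(T) \subset L^2(T)$, each $Q_T({\rm v}|_T)$ is well-defined, and the function patched from these pieces lies in $X_{\dg}$ precisely because jumps across edges are permitted. (Equivalently one may take $Q_T$ to be the averaged Taylor polynomial of degree one in the sense of Dupont--Scott; the argument is identical.)

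The key step is the local estimate $\norm{{\rm v} - Q_T {\rm v}}_{H^l(T)} \le C_I h_T^{s-l}\norm{{\rm v}}_{H^s(T)}$ for $l \in \{0,1\}$. This is the standard Bramble--Hilbert/Ciarlet scaling argument: pull $T$ back to the reference triangle $\hat T$ by the affine map $F_T$; on $\hat T$ the operator $\hat Q := Q_{\hat T}$ is bounded on $H^{\min(s,2)}(\hat T)$ and reproduces $P_1(\hat T)$, so the Bramble--Hilbert lemma gives $\norm{\hat{\rm v} - \hat Q \hat{\rm v}}_{H^l(\hat T)} \le C\,|\hat{\rm v}|_{H^{\min(s,2)}(\hat T)}$; transforming back and using shape-regularity to control $\norm{DF_T}$ and $\norm{DF_T^{-1}}$ (hence the powers of $h_T$) yields the claim, with $C_I$ depending only on $s$ and the minimum angle of $\mathcal{T}$. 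For $1 \le s \le 2$, which is the only range needed here since the relevant exact solutions lie in $\h^{1+\alpha}(\Omega)$ with $0 < \alpha \le 1$, this is exactly the stated bound (the seminorm $|{\rm v}|_{H^s(T)}$ produced on the right-hand side is dominated by $\norm{{\rm v}}_{H^s(T)}$); for $s > 2$ the estimate follows a fortiori by replacing $\norm{{\rm v}}_{H^s(T)}$ with $\norm{{\rm v}}_{H^2(T)}$, since piecewise affine functions cannot approximate to higher order.

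The main technical point to watch is that for $s = 1$ in two dimensions $H^1(T)$ does not embed into $C^0(\overline T)$, so nodal (Lagrange) interpolation is unavailable; this is exactly why an $L^2$-projection or averaging/Taylor-polynomial construction must be used instead. A secondary subtlety is keeping track of the stability constant of $\hat Q$ and of the Jacobian ratios under the affine pull-back, which is where shape-regularity enters. However, because $X_{\dg}$ is discontinuous, none of the edge-compatibility bookkeeping that complicates the Scott--Zhang construction in the conforming case (cf.\ Lemma~\ref{Scott-Zhang interpolation}) is required, so the dG interpolation estimate is in fact simpler to establish than its conforming counterpart.
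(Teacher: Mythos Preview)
The paper does not prove this lemma; it is simply quoted from the cited reference \cite{Prudhomme_Report} as a known result. Your proposal supplies a valid proof via the element-wise $L^2$-projection (equivalently, the averaged Taylor polynomial) together with the Bramble--Hilbert/scaling argument, which is precisely the standard route used in that reference.

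One caveat: your closing remark about $s>2$ is backwards. For $h_T<1$ one has $h_T^{2-l}\ge h_T^{s-l}$, so the $P_1$ bound $\norm{{\rm v}-Q_T{\rm v}}_{H^l(T)}\le C\,h_T^{2-l}\norm{{\rm v}}_{H^2(T)}$ is \emph{weaker} than the stated $h_T^{s-l}\norm{{\rm v}}_{H^s(T)}$, not stronger; piecewise affine approximation generically cannot achieve order $h_T^{s-l}$ when $s>2$. The lemma as written is implicitly meant for $1\le s\le 2$ (the cited source states it for $s\le p+1$ with $p$ the polynomial degree, here $p=1$), and the paper only ever invokes it with $s=1+\alpha\le 2$, so this does not affect anything you actually need.
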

\begin{lem} \emph{(Enrichment operator).} \cite{Poincare_Brenner,Karakashian} \label{enrichment operator dG}
	There exists an enrichment operator ${\rm E}_h : X_{\rm dG} \rightarrow V_h\subset H_0^1(\Omega)$, where $V_h $ is the Lagrange $P_1$ conforming finite element space associated with the triangulation $\mathcal{T}$ that satisfies the following properties. For any $\varphi_{\rm dG} \in X_{\rm dG}$, $\displaystyle (a) \sum_{ T \in \mathcal{T}} h_T^{-2}\norm{{\rm E}_h\varphi_{\rm dG} -\varphi_{\rm dG}}_{0,T}^2 + \norm{{\rm E}_h\varphi_{\rm dG}}_{1}^2 \leq C_{en_1} \norm{\varphi_{\rm dG}}_{\rm dG}^2,$ and $\displaystyle (b)\, \norm{{\rm E}_h\varphi_{\rm dG} - \varphi_{\rm dG}}_{\rm dG}^2 \leq C_{en_2}(\sum_{ E \in \mathcal{E}}\int_{ E}\frac{1}{h_E}[\varphi_{\rm dG}]_E^2 {\rm ds} ) ,$
	where $C_{en_1}$ and $C_{en_2}$ are positive constants independent of $h$.
\end{lem}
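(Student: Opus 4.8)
The plan is to build $E_h$ by nodal averaging and to bound every quantity in the statement by the edge--jump functional $\sum_{E\in\mathcal{E}}h_E^{-1}\norm{[\varphi_\dg]_E}_{0,E}^2$, which measures the deviation of $\varphi_\dg$ from $H^1_0(\Omega)$ and is dominated by $\norm{\varphi_\dg}_\dg^2$. Concretely, for each Lagrange node $x$ of $\mathcal{T}$ (i.e.\ each vertex) I would set $(E_h\varphi_\dg)(x)=0$ whenever $x\in\partial\Omega$, and $(E_h\varphi_\dg)(x)=|\omega_x|^{-1}\sum_{T\in\omega_x}\varphi_\dg|_T(x)$ otherwise, where $\omega_x$ is the set of triangles of $\mathcal{T}$ containing $x$ and $|\omega_x|$ is the number of such triangles; by shape-regularity $|\omega_x|$ is bounded uniformly in $h$, and the vanishing of the boundary nodal values forces $E_h\varphi_\dg\in V_h\subset H^1_0(\Omega)$.

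First I would prove the local bound $\norm{E_h\varphi_\dg-\varphi_\dg}_{0,T}^2\lesssim h_T^2\sum_{x}|(E_h\varphi_\dg-\varphi_\dg)(x)|^2$, the sum running over the vertices of $T$, using equivalence of norms on $P_1(T)$ together with a change of variables to the reference triangle. Since $(E_h\varphi_\dg-\varphi_\dg)(x)=|\omega_x|^{-1}\sum_{T'\in\omega_x}\bigl(\varphi_\dg|_{T'}(x)-\varphi_\dg|_T(x)\bigr)$ for an interior vertex $x$ (and $=-\varphi_\dg|_T(x)$ for $x\in\partial\Omega$), each summand telescopes into a short chain of edge jumps $[\varphi_\dg]_E(x)$ along edges of the star of $x$ joining $T'$ to $T$; a one-dimensional inverse estimate gives $|[\varphi_\dg]_E(x)|^2\lesssim h_E^{-1}\norm{[\varphi_\dg]_E}_{0,E}^2$. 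Summing over $T\in\mathcal{T}$, using $h_T\simeq h_E$ for $E\subset\partial T$ and the finite overlap of the stars, yields
\[
\sum_{T\in\mathcal{T}}h_T^{-2}\norm{E_h\varphi_\dg-\varphi_\dg}_{0,T}^2\;\lesssim\;\sum_{E\in\mathcal{E}}h_E^{-1}\norm{[\varphi_\dg]_E}_{0,E}^2\;\le\;\norm{\varphi_\dg}_\dg^2 ,
\]
and an elementwise inverse inequality then gives $\norm{\nabla(E_h\varphi_\dg-\varphi_\dg)}_{0,T}^2\lesssim h_T^{-2}\norm{E_h\varphi_\dg-\varphi_\dg}_{0,T}^2$, so the broken gradient of $E_h\varphi_\dg-\varphi_\dg$ is likewise controlled by $\norm{\varphi_\dg}_\dg$.

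With these ingredients I would deduce $(a)$ from $\norm{E_h\varphi_\dg}_1\lesssim\norm{\nabla E_h\varphi_\dg}_0$ (the Poincaré inequality on $H^1_0(\Omega)$, Lemma~\ref{Poincare type inequality}) together with $\norm{\nabla E_h\varphi_\dg}_0\le\big(\sum_{T\in\mathcal{T}}\norm{\nabla(E_h\varphi_\dg-\varphi_\dg)}_{0,T}^2\big)^{1/2}+\big(\sum_{T\in\mathcal{T}}\norm{\nabla\varphi_\dg}_{0,T}^2\big)^{1/2}$, both terms on the right being $\lesssim\norm{\varphi_\dg}_\dg$; combined with the displayed bound this proves $(a)$. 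For $(b)$, I would observe that $[E_h\varphi_\dg-\varphi_\dg]_E=-[\varphi_\dg]_E$ on every $E\in\mathcal{E}$, since $E_h\varphi_\dg$ is single-valued across interior edges and vanishes on boundary edges; hence the penalty part of $\norm{E_h\varphi_\dg-\varphi_\dg}_\dg^2$ equals $\sum_{E\in\mathcal{E}}\tfrac{\sigma_\dg}{h_E}\norm{[\varphi_\dg]_E}_{0,E}^2$, and adding the broken-gradient estimate from the previous paragraph yields $\norm{E_h\varphi_\dg-\varphi_\dg}_\dg^2\lesssim\sum_{E\in\mathcal{E}}h_E^{-1}\norm{[\varphi_\dg]_E}_{0,E}^2$. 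The genuine obstacle is the bookkeeping in the second paragraph: expressing the nodal difference $\varphi_\dg|_{T'}(x)-\varphi_\dg|_T(x)$ as a controlled sum of edge jumps evaluated at $x$, and keeping every hidden constant (the size of $\omega_x$, the ratios $h_T/h_E$, the length of each connecting chain) uniform in $h$ by appealing only to shape-regularity.
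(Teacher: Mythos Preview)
The paper does not supply its own proof of this lemma; it is stated with citations to \cite{Poincare_Brenner,Karakashian} and used as a black box. Your sketch is correct and is precisely the standard argument from those references: define $E_h$ by nodal averaging (zero at boundary vertices), reduce the elementwise $L^2$ error to nodal discrepancies via scaling and norm equivalence on $P_1$, telescope each nodal discrepancy into a bounded chain of edge jumps through the vertex star, and control pointwise jump values by $h_E^{-1}\norm{[\varphi_\dg]_E}_{0,E}^2$ via a one-dimensional inverse estimate. The deductions of $(a)$ and $(b)$ from the resulting jump bound are straightforward and exactly as you describe, including the observation $[E_h\varphi_\dg-\varphi_\dg]_E=-[\varphi_\dg]_E$. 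The only point worth emphasising is that at a boundary vertex the value $-\varphi_\dg|_T(x)$ is itself a boundary-edge jump $-[\varphi_\dg]_E(x)$ (by the paper's convention $[\varphi]_E=\varphi|_T$ on $E\in\mathcal{E}_h^\partial$), so no separate treatment is needed there; you allude to this but it is worth making explicit.
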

\begin{rem}(Modified local efficiency results)\label{modified local efficiency}
	Similar local efficiency results in Lemmas \ref{lem 3.1} -\ref{local efficiency for L2 estimate} hold for dGFEM with $\vertiiih{\cdot}$ is replaced by $\vertiiidg{\cdot}$ and the interpolation operator ${\rm I}_h$ replaced by ${\rm I}_\dg.$
\end{rem}
\noindent 
The discrete inf-sup condition corresponding to the perturbed bilinear form
$$\dual{DN_{\dg}(\textrm{I}_{\dg}\Psi)\Theta_{\dg}, \Phi_{\dg}} := A_{\dg}(\Theta_{\dg},\Phi_{\dg})+3B(\textrm{I}_{\dg}\Psi,\textrm{I}_{\dg}\Psi,\Theta_{\dg},\Phi_{\dg})+C(\Theta_{\dg},\Phi_{\dg})$$ is stated first. This is crucial in establishing the error estimates.
\begin{lem} \label{discrete inf sup dG} \emph{(Stability of perturbed bilinear form).}
Let $\Psi \! \in \! \boldsymbol{\mathcal{{X}}} \cap \mathbf{H}^{1+\alpha}(\Omega), \, 0<\alpha \le 1,$ be a regular solution of
	\eqref{continuous nonlinear} and ${{\rm I}_{\rm dG}}\Psi$ be its interpolant. For a sufficiently large $\sigma_\dg$ and a sufficiently small discretization parameter $h$, there exists a constant $\beta_1 $ such that $\displaystyle 	0< \beta_1 \leq \inf_{\substack{\Theta_{\dg} \in \X_{\dg} \\  \vertiii{\Theta_{\dg}}_{\dg}=1}} \sup_{\substack{\Phi_{\dg} \in \X_{\dg} \\  \vertiii{\Phi_{\dg}}_{\dg}=1}}\dual{DN_{\dg}({{\rm I}_{\dg}\Psi})\Theta_{\dg},\Phi_{\dg} }.$
\end{lem}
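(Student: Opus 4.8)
The plan is to mirror, step for step, the two-stage argument used for Nitsche's method in Lemma~\ref{Lemma for discrete infsup} and Theorem~\ref{discrete inf sup}, with the conforming extension operator $\Pi_h$ replaced by the enrichment operator $\mathrm{E}_h$ of Lemma~\ref{enrichment operator dG} and the interpolant $\textrm{I}_h$ replaced by $\textrm{I}_{\dg}$. First I would record the dG analogue of Lemma~\ref{linear equation1 for discrete inf-sup}: for $\Theta_{\dg}\in\X_{\dg}$ with $\vertiiidg{\Theta_{\dg}}=1$ there exist $\boldsymbol\xi,\boldsymbol\eta\in\mathbf{H}^{1+\alpha}(\Omega)\cap\V$ solving $A(\boldsymbol\xi,\Phi)=3B(\Psi,\Psi,\Theta_{\dg},\Phi)+C(\Theta_{\dg},\Phi)$ and $A(\boldsymbol\eta,\Phi)=3B(\Psi,\Psi,\mathrm{E}_h\Theta_{\dg},\Phi)+C(\mathrm{E}_h\Theta_{\dg},\Phi)$ for all $\Phi\in\V$, with the elliptic-regularity and perturbation bounds $\vertiii{\boldsymbol\xi}_{1+\alpha}\lesssim\epsilon^{-2}(1+\vertiii{\Psi}_{1+\alpha}^2)$ and $\vertiii{\nabla(\boldsymbol\eta-\boldsymbol\xi)}_0\lesssim\epsilon^{-2}h(1+\vertiii{\Psi}_{1+\alpha}^2)$; this is proved verbatim as Lemma~\ref{linear equation1 for discrete inf-sup} using Lemma~\ref{enrichment operator dG}(a) in place of \eqref{enrichment2}. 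Let $\textrm{I}_h\boldsymbol\xi\in V_h$ be the conforming Lagrange interpolant of $\boldsymbol\xi$; note $[\textrm{I}_h\boldsymbol\xi]_E=0$ on every $E\in\mathcal{E}$ and $\textrm{I}_h\boldsymbol\xi=0$ on $\partial\Omega$.

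Next I would establish the dG analogue of Lemma~\ref{Lemma for discrete infsup}, namely $\vertiiidg{\Theta_{\dg}+\textrm{I}_h\boldsymbol\xi}\lesssim\dual{DN_{\dg}(\textrm{I}_{\dg}\Psi)\Theta_{\dg},\Phi_{\dg}}+\epsilon^{-2}h^\alpha(1+\vertiii{\Psi}_{1+\alpha}^2)$ for a suitable $\Phi_{\dg}\in\X_{\dg}$ with $\vertiiidg{\Phi_{\dg}}=1$. Using discrete coercivity (Lemma~\ref{boundedness dG}) to select $\Phi_{\dg}$, one regroups via the defining identity for $\boldsymbol\xi$ tested against $\mathrm{E}_h\Phi_{\dg}$ and the definition of $\dual{DN_{\dg}(\textrm{I}_{\dg}\Psi)\cdot,\cdot}$, and treats $A_{\dg}(\textrm{I}_h\boldsymbol\xi,\Phi_{\dg})-A(\boldsymbol\xi,\mathrm{E}_h\Phi_{\dg})$. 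Since $[\textrm{I}_h\boldsymbol\xi]_E=0$, both the penalty contribution and the symmetrisation ($\lambda$) term in $A_{\dg}(\textrm{I}_h\boldsymbol\xi,\cdot)$ drop out, leaving only $-\sum_{E\in\mathcal{E}}\dual{\{\nabla(\textrm{I}_h\boldsymbol\xi)\nu_E\}_E,[\Phi_{\dg}]_E}_E$; an element-wise integration by parts with $\Delta(\textrm{I}_h\boldsymbol\xi)=0$ on each $T$ and $\mathrm{E}_h\Phi_{\dg}=0$ on $\partial\Omega$ reassembles these into $\sum_{E\in\mathcal{E}_h^i}\dual{[\nabla(\textrm{I}_h\boldsymbol\xi)\nu_E]_E,\Phi_{\dg}-\mathrm{E}_h\Phi_{\dg}}_E$, which, combined with the $B$ and $C$ terms exactly as in \eqref{Term4}, is rewritten through the local residuals $\boldsymbol\eta_T:=(2\epsilon^{-2}(\abs{\textrm{I}_{\dg}\Psi}^2\Theta_{\dg}+2(\textrm{I}_{\dg}\Psi\cdot\Theta_{\dg})\textrm{I}_{\dg}\Psi-\Theta_{\dg}))|_T$ and $\boldsymbol\eta_E:=[\nabla(\textrm{I}_h\boldsymbol\xi)\nu_E]_E$, and bounded by $\epsilon^{-2}h^\alpha(1+\vertiii{\Psi}_{1+\alpha}^2)$ using the dG local efficiency of Remark~\ref{modified local efficiency} (the analogue of Lemma~\ref{local efficiency for discrete infsup}) together with Lemma~\ref{enrichment operator dG}(a). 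The remaining residual contributions $A(\textrm{I}_h\boldsymbol\xi-\boldsymbol\xi,\mathrm{E}_h\Phi_{\dg})$ and $3(B(\Psi,\Psi,\Theta_{\dg},\mathrm{E}_h\Phi_{\dg})-B(\textrm{I}_{\dg}\Psi,\textrm{I}_{\dg}\Psi,\Theta_{\dg},\mathrm{E}_h\Phi_{\dg}))$ are $O(\epsilon^{-2}h^\alpha(1+\vertiii{\Psi}_{1+\alpha}^2))$ by Lemma~\ref{boundedness}(i),(v), Lemma~\ref{Interpolation estimate dG} and Lemma~\ref{enrichment operator dG}.

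With these in hand, the proof of Lemma~\ref{discrete inf sup dG} follows Theorem~\ref{discrete inf sup}: the continuous inf-sup condition \eqref{2.9} applied to $\mathrm{E}_h\Theta_{\dg}\in\V$ yields $\Phi\in\V$, $\vertiii{\Phi}_1=1$, with $\beta\vertiii{\mathrm{E}_h\Theta_{\dg}}_1\le A(\mathrm{E}_h\Theta_{\dg}+\boldsymbol\eta,\Phi)\le\vertiiidg{\mathrm{E}_h\Theta_{\dg}+\boldsymbol\eta}$, and a triangle inequality gives $1=\vertiiidg{\Theta_{\dg}}\lesssim\vertiiidg{\Theta_{\dg}-\mathrm{E}_h\Theta_{\dg}}+\vertiiidg{\Theta_{\dg}+\textrm{I}_h\boldsymbol\xi}+\vertiii{\boldsymbol\xi-\textrm{I}_h\boldsymbol\xi}_1+\vertiii{\nabla(\boldsymbol\xi-\boldsymbol\eta)}_0$, where $\boldsymbol\xi-\boldsymbol\eta,\,\boldsymbol\xi-\textrm{I}_h\boldsymbol\xi\in\V$ carry no jump or boundary penalty. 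Because $\textrm{I}_h\boldsymbol\xi$ is continuous and vanishes on $\partial\Omega$, $[\Theta_{\dg}+\textrm{I}_h\boldsymbol\xi]_E=[\Theta_{\dg}]_E$ for all $E\in\mathcal{E}$, so Lemma~\ref{enrichment operator dG}(b) gives $\vertiiidg{\Theta_{\dg}-\mathrm{E}_h\Theta_{\dg}}\lesssim\big(\sum_{E\in\mathcal{E}}h_E^{-1}\vertiii{[\Theta_{\dg}]_E}_{0,E}^2\big)^{1/2}\lesssim\sigma_{\dg}^{-1/2}\vertiiidg{\Theta_{\dg}+\textrm{I}_h\boldsymbol\xi}$. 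Substituting the dG analogue of Lemma~\ref{Lemma for discrete infsup}, Lemma~\ref{Interpolation estimate dG}, and the bounds for $\boldsymbol\xi,\boldsymbol\eta$ into the triangle inequality (using $h\le h^\alpha$ to absorb the $\epsilon^{-2}h(1+\vertiii{\Psi}_{1+\alpha}^2)$ term) gives $1\le C_1\big(\dual{DN_{\dg}(\textrm{I}_{\dg}\Psi)\Theta_{\dg},\Phi_{\dg}}+\epsilon^{-2}h^\alpha(1+\vertiii{\Psi}_{1+\alpha}^2)\big)$ with $C_1$ independent of $h$; choosing $h<h_0:=(\epsilon^2/(2C_1(1+\vertiii{\Psi}_{1+\alpha}^2)))^{1/\alpha}$ absorbs the last term and yields the discrete inf-sup condition with $\beta_1=1/(2C_1)$.

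The main obstacle, relative to the Nitsche case, is the bookkeeping of the extra interior-edge jump and average terms produced by $A_{\dg}$. The key observation that makes the argument go through is that the conforming objects $\textrm{I}_h\boldsymbol\xi$ and $\mathrm{E}_h\Phi_{\dg}$ carry no jumps, so the penalty and $\lambda$-symmetrisation terms of $A_{\dg}(\textrm{I}_h\boldsymbol\xi,\cdot)$ vanish and the only genuinely new computation is reconciling the average term $\sum_{E\in\mathcal{E}}\dual{\{\nabla(\textrm{I}_h\boldsymbol\xi)\nu_E\}_E,[\Phi_{\dg}]_E}_E$ with the element-wise integration by parts so that \emph{exactly} the local residual $(\boldsymbol\eta_T,\boldsymbol\eta_E)$ of Lemma~\ref{local efficiency for discrete infsup} resurfaces and the dG local efficiency estimates of Remark~\ref{modified local efficiency} apply unchanged. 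One must also check the boundary penalty bookkeeping in the triangle inequality above, but this is where the identity $[\Theta_{\dg}+\textrm{I}_h\boldsymbol\xi]_E=[\Theta_{\dg}]_E$ (valid on boundary edges since $\textrm{I}_h\boldsymbol\xi|_E=0$ there) does the work.
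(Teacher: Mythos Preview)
Your proposal is correct and follows essentially the same two-stage template as the paper (mirror Lemma~\ref{Lemma for discrete infsup} and Theorem~\ref{discrete inf sup} with $\mathrm{E}_h$ replacing $\Pi_h$). There is one genuine, if minor, difference worth noting: you interpolate $\boldsymbol\xi$ by the \emph{conforming} Lagrange interpolant $\textrm{I}_h\boldsymbol\xi\in V_h$, whereas the paper uses the discontinuous interpolant $\textrm{I}_{\dg}\boldsymbol\xi$. Your choice makes $[\textrm{I}_h\boldsymbol\xi]_E=0$ on every edge, so the $\lambda$-symmetrisation and penalty contributions in $A_{\dg}(\textrm{I}_h\boldsymbol\xi,\Phi_{\dg})$ vanish outright and only the consistency average term survives; this feeds directly into the integration-by-parts identity and reproduces the local residual pair $(\boldsymbol\eta_T,\boldsymbol\eta_E)$ exactly as in the Nitsche case. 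The paper instead keeps all three extra terms, rewrites them as
\[
\sum_{E\in\mathcal{E}^i_h}\dual{\{\nabla(\textrm{I}_{\dg}\boldsymbol\xi)\nu_E\}_E,[\Phi_{\dg}-\mathrm{E}_h\Phi_{\dg}]_E}_E,\quad
\sum_{E\in\mathcal{E}}\dual{\{\nabla\Phi_{\dg}\nu_E\}_E,[\textrm{I}_{\dg}\boldsymbol\xi-\boldsymbol\xi]_E}_E,\quad
\sum_{E\in\mathcal{E}}\tfrac{\sigma_{\dg}}{h_E}\dual{[\textrm{I}_{\dg}\boldsymbol\xi-\boldsymbol\xi]_E,[\Phi_{\dg}-\mathrm{E}_h\Phi_{\dg}]_E}_E
\]
using $[\boldsymbol\xi]_E=0$ and $[\mathrm{E}_h\Phi_{\dg}]_E=0$, and then estimates them by Cauchy--Schwarz together with Lemmas~\ref{Interpolation estimate dG} and~\ref{enrichment operator dG}. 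Your route is slightly cleaner; the paper's route stays closer to the dG objects throughout. One small bookkeeping point: Remark~\ref{modified local efficiency} is stated with $\textrm{I}_{\dg}$, so when you invoke it with $\boldsymbol\eta_E=[\nabla(\textrm{I}_h\boldsymbol\xi)\nu_E]_E$ you should remark that $V_h\subset\X_{\dg}$ and that the bubble-function proof of Lemma~\ref{local efficiency for discrete infsup} goes through verbatim for the conforming interpolant.
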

\begin{proof}
\noindent The proof follows along similar lines as the proofs of Lemma \ref{Lemma for discrete infsup}  and Theorem \ref{discrete inf sup}, except for the additional terms $$ \displaystyle \sum_{E \in \mathcal{E}^i_h}\dual{\{\nabla (\textrm{I}_{\dg}\boldsymbol \xi) \nu_E\}_E, [\Phi_{\dg}]_E}_{E} , \;  \displaystyle \sum_{E \in \mathcal{E}}\dual{\{\nabla \Phi_{\dg}  \nu_E\}_E, [\textrm{I}_{\dg}\boldsymbol \xi]_E}_{E} , \; \sum_{E \in \mathcal{E}}\frac{\sigma_\dg}{h_E}\dual{[\textrm{I}_{\dg}\boldsymbol \xi]_E , \;[\Phi_{\dg}]_E}_{E} $$ that appear in $A_{\dg}(\textrm{I}_{\dg}\boldsymbol \xi,\Phi_{\dg})-A(\boldsymbol \xi,\E_h\Phi_{\dg})$ (see \eqref{2.4.8.1}). Since $[{\rm E}_h\Phi_{\dg}]_{E}=0$ and $[\boldsymbol \xi]_E=0$ for all $E \in  \mathcal{E}_h^i $, the above displayed  terms are equal to 
$\displaystyle  \sum_{E \in \mathcal{E}^i_h}\dual{\{\nabla (\textrm{I}_{\dg}\boldsymbol \xi) \nu_E\}_E, [\Phi_{\dg} -\E_h\Phi_{\dg}]_E}_{E}, \sum_{E \in \mathcal{E}}\dual{\{\nabla \Phi_{\dg}  \nu_E\}_E, [\textrm{I}_{\dg}\boldsymbol \xi-\boldsymbol \xi]_E}_{E} , \sum_{E \in \mathcal{E}}\frac{\sigma_\dg}{h_E}\dual{[\textrm{I}_{\dg}\boldsymbol \xi-\boldsymbol \xi]_E , [\Phi_{\dg} -\E_h\Phi_{\dg}]_E}_{E},$
respectively. A Cauchy-Schwarz inequality and Lemmas \ref{Interpolation estimate dG},  \ref{enrichment operator dG} yield estimate for the above terms. The rest of the details are skipped for brevity.
\end{proof}
%
\begin{lem}\label{Lemma for ball to ball map dG}
	Let $\Psi$ be a regular solution of \eqref{continuous nonlinear} and ${\rm I}_\dg\Psi \in \X_{\dg}$ be its interpolant. Then, for any $\Phi_\dg \in \X_\dg$ with $ \vertiiidg{\Phi_{\dg} }=1,$ it holds that
	\begin{align*}
	&A_{\dg}({\rm{I}}_{\dg}\Psi , \Phi_{\dg} ) + B({\rm{I}}_{\dg}\Psi,{\rm{I}}_{\dg}\Psi,{\rm{I}}_{\dg}\Psi , \Phi_{\dg} )
	+ C({\rm{I}}_{\dg}\Psi , \Phi_{\dg} )  -L_{\dg}(\Phi_{\dg})\\& \qquad \lesssim h^{\alpha} (1+ \epsilon^{-2}h^{\alpha}(1+ \vertiii{ \Psi}_{1+\alpha}^2)) \vertiii{ \Psi}_{1+\alpha}.
	\end{align*}
\end{lem}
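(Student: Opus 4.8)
The plan is to follow the proof of Lemma~\ref{Lemma for ball to ball map} almost verbatim, replacing $\Pi_h$ by the enrichment operator $\E_h$ of Lemma~\ref{enrichment operator dG}, ${\rm I}_h$ by ${\rm I}_\dg$ of Lemma~\ref{Interpolation estimate dG}, and $\vertiiih{\cdot}$ by $\vertiiidg{\cdot}$, while carefully tracking the extra average, jump and penalty contributions of $a_\dg$ that are absent from $a_h$. Abbreviate $\Phi:=\Phi_\dg$ and $\Phi_c:=\E_h\Phi_\dg\in V_h\subset\h^1_0(\Omega)$. First I would add and subtract $A_\dg({\rm I}_\dg\Psi,\Phi_c)-L_\dg(\Phi_c)$ to split the left-hand side into (i)~$A_\dg({\rm I}_\dg\Psi,\Phi-\Phi_c)-L_\dg(\Phi-\Phi_c)$, (ii)~$B({\rm I}_\dg\Psi,{\rm I}_\dg\Psi,{\rm I}_\dg\Psi,\Phi)+C({\rm I}_\dg\Psi,\Phi)$, and (iii)~$A_\dg({\rm I}_\dg\Psi,\Phi_c)-L_\dg(\Phi_c)$. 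In (i), expanding $a_\dg$ component-wise, integrating by parts element-wise, using $\Delta({\rm I}_\dg\Psi)=0$ on each $T$, and cancelling $\sum_{E}\dual{\{\nabla({\rm I}_\dg\Psi)\nu_E\}_E,[\Phi-\Phi_c]_E}_E$ against the matching average term of $a_\dg$ leaves (as in \eqref{3.24}) the interior jump term $\sum_{E\in\mathcal{E}_h^i}\dual{[\nabla({\rm I}_\dg\Psi)\nu_E]_E,\{\Phi-\Phi_c\}_E}_E$, plus the $\lambda$-symmetrisation term $-\lambda\sum_{E\in\mathcal{E}}\dual{\{\nabla(\Phi-\Phi_c)\nu_E\}_E,[{\rm I}_\dg\Psi]_E}_E$, the penalty term $\sum_{E\in\mathcal{E}}\tfrac{\sigma_\dg}{h_E}\dual{[{\rm I}_\dg\Psi]_E,[\Phi-\Phi_c]_E}_E$, and the data terms of $L_\dg$ on $\mathcal{E}_h^\partial$. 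For (iii), the consistency $N_\dg(\Psi,\Phi_c)=L_\dg(\Phi_c)$ (valid because $\Phi_c$ vanishes on $\partial\Omega$) reduces it, after the same boundary cancellation as in \eqref{3.22_new}, to $A({\rm I}_\dg\Psi-\Psi,\Phi_c)+C({\rm I}_\dg\Psi-\Psi,\Phi_c)$ together with the cubic difference $B({\rm I}_\dg\Psi,{\rm I}_\dg\Psi,{\rm I}_\dg\Psi,\Phi_c)-B(\Psi,\Psi,\Psi,\Phi_c)$.

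The next step is to combine the interior jump term of (i) with the $C(\cdot)$ and $B(\cdot)$ terms of (ii), exactly as in \eqref{Term}--\eqref{3.22_new}, to recover the local residuals $\boldsymbol\eta_T:=(2\epsilon^{-2}(\abs{{\rm I}_\dg\Psi}^2-1){\rm I}_\dg\Psi)|_T$ and $\boldsymbol\eta_E:=[\nabla({\rm I}_\dg\Psi)\nu_E]_E$. This rewrites the left-hand side as a sum of: the local-residual pairing $\sum_T\int_T\boldsymbol\eta_T\cdot(\Phi-\Phi_c)\dx+\sum_{E\in\mathcal{E}_h^i}\dual{\boldsymbol\eta_E,\{\Phi-\Phi_c\}_E}_E$; the interpolation-error terms $A({\rm I}_\dg\Psi-\Psi,\Phi_c)+C({\rm I}_\dg\Psi-\Psi,\Phi_c)$; the $\lambda$-symmetrisation and penalty terms (whose boundary pieces have been merged with the corresponding data terms of $L_\dg$, so that on interior edges they contain $[{\rm I}_\dg\Psi-\Psi]_E$ and on boundary edges ${\rm I}_\dg\Psi-\mathbf g$); and the cubic difference. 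These are estimated in turn, using $\vertiiidg{\Phi}=1$. The local-residual pairing is bounded by a Cauchy--Schwarz inequality, the dGFEM form of Lemma~\ref{lem 3.1}$(ii)$ (Remark~\ref{modified local efficiency}) and the enrichment estimate Lemma~\ref{enrichment operator dG}(a) combined with a trace inequality on $\Phi-\Phi_c\in\X_\dg$, giving $\lesssim h^{\alpha}(1+\epsilon^{-2}h^{\alpha}(1+\vertiii{\Psi}_{1+\alpha}^2))\vertiii{\Psi}_{1+\alpha}$; the interpolation-error terms by Lemmas~\ref{Interpolation estimate dG}, \ref{boundedness dG} and~\ref{enrichment operator dG}(a), giving $\lesssim h^{\alpha}(1+\epsilon^{-2}h^\alpha)\vertiii{\Psi}_{1+\alpha}$; the cubic difference by the dGFEM analogue of Lemma~\ref{2.3.17}$(v)$, giving $\lesssim\epsilon^{-2}h^{2\alpha}\vertiii{\Psi}_{1+\alpha}^3$.

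For the $\lambda$-symmetrisation and penalty terms the decisive point is that $\Psi\in\h^{1+\alpha}(\Omega)\hookrightarrow\mathbf C^0(\overline\Omega)$ for $\alpha>0$, so $[\Psi]_E=0$ on interior edges and $\Psi=\mathbf g$ on $\partial\Omega$; hence $[{\rm I}_\dg\Psi]_E=[{\rm I}_\dg\Psi-\Psi]_E$ on interior edges and the boundary pieces become ${\rm I}_\dg\Psi-\mathbf g$, and by the trace inequality Lemma~\ref{discrete trace inequality}$(i)$ together with Lemma~\ref{Interpolation estimate dG} one has $\sum_{E}h_E^{-1}\vertiii{[{\rm I}_\dg\Psi-\Psi]_E}_{0,E}^2\lesssim h^{2\alpha}\vertiii{\Psi}_{1+\alpha}^2$, while a discrete trace inequality gives $\sum_{E}h_E\vertiii{\{\nabla(\Phi-\Phi_c)\nu_E\}_E}_{0,E}^2\lesssim\vertiiidg{\Phi-\Phi_c}^2\lesssim\vertiiidg{\Phi}^2=1$; a Cauchy--Schwarz inequality then bounds both by $\lesssim h^{\alpha}\vertiii{\Psi}_{1+\alpha}$. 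Summing all contributions yields the stated estimate. I expect the main obstacle to be precisely the bookkeeping around these extra terms: one must pair the boundary contributions of $a_\dg$ correctly with those of $L_\dg$, use the continuity of $\Psi$ to turn the leftover interior jumps $[{\rm I}_\dg\Psi]_E$ into interpolation errors, and then apply the trace and interpolation estimates --- steps that have no counterpart in the conforming Nitsche argument and that must be handled without spoiling the $h^\alpha$ order.
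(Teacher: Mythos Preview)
Your proposal is correct and follows essentially the same route as the paper: split via $\E_h\Phi_\dg$, recover the local residuals $\boldsymbol\eta_T,\boldsymbol\eta_E$ and apply the dG version of Lemma~\ref{lem 3.1}$(ii)$ (Remark~\ref{modified local efficiency}), then isolate and estimate the two genuinely new edge contributions, which in the paper's numbering are $T_5=\lambda\sum_{E\in\mathcal E}\dual{[\Psi-{\rm I}_\dg\Psi]_E,\{\nabla\Phi_\dg\nu_E\}_E}_E$ and $T_6=\sum_{E\in\mathcal E}\tfrac{\sigma_\dg}{h_E}\dual{[{\rm I}_\dg\Psi-\Psi]_E,[\Phi_\dg-\E_h\Phi_\dg]_E}_E$.

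One small bookkeeping slip: piece (iii) does \emph{not} reduce cleanly to $A({\rm I}_\dg\Psi-\Psi,\Phi_c)+C({\rm I}_\dg\Psi-\Psi,\Phi_c)$ plus the cubic difference. The bilinear form $A_\dg({\rm I}_\dg\Psi,\Phi_c)$ still carries the symmetrisation term $-\lambda\sum_{E}\dual{\{\nabla\Phi_c\nu_E\}_E,[{\rm I}_\dg\Psi]_E}_E$ (only the terms with $[\Phi_c]_E$ vanish), and after merging with $L_\dg(\Phi_c)$ this becomes $\lambda\sum_{E}\dual{[\Psi-{\rm I}_\dg\Psi]_E,\{\nabla\Phi_c\nu_E\}_E}_E$; combining it with your $\lambda$-term from (i) yields $\{\nabla\Phi_\dg\nu_E\}_E$ rather than $\{\nabla(\Phi-\Phi_c)\nu_E\}_E$ in $T_5$. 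Either version is bounded identically by Cauchy--Schwarz, the discrete trace inequality, and the interpolation estimate $\sum_E h_E^{-1}\vertiii{[{\rm I}_\dg\Psi-\Psi]_E}_{0,E}^2\lesssim h^{2\alpha}\vertiii{\Psi}_{1+\alpha}^2$, so your final estimate is unaffected.
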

\begin{proof}
The proof of Lemma 3.15 is modified and the steps that are different are detailed.
 The definitions of $A_\dg(\cdot, \cdot) $ (with an integration by parts) and $L_\dg( \cdot) $ will lead to the inter-element jump and average terms in the identities  corresponding to \eqref{3.24} and \eqref{3.23}. Utilize $[\Psi]_E=0$ for all $E \in \mathcal{E}^i$ to rewrite these identities as follows.
	\begin{align*}
	A_{\dg}(\textrm{I}_{\dg}&\Psi , \Phi_{\dg}- \E_{h}\Phi_{\dg} )  -L_{\dg}(\Phi_{\dg}- {\rm E}_h\Phi_{\dg})= \sum_{ E \in \mathcal{E}_h^{i}}\dual{  [\nabla(\textrm{I}_{\dg}\Psi)\nu_E ]_E,\{\Phi_{\dg}- \E_h\Phi_{\dg}\}_E}_{E} \notag\\&+ \lambda\sum_{ E \in \mathcal{E}}\dual{[ \Psi-\textrm{I}_{\dg}\Psi]_E, \{\nabla (\Phi_{\dg}- \E_h\Phi_{\dg})\nu_E \}_E}_{E}  + \sum_{E \in \mathcal{E}} \frac{\sigma_\dg}{h_E}\dual{[\textrm{I}_{\dg}\Psi- \Psi]_E, [\Phi_{\dg}-  \E_h\Phi_{\dg}]_E}_E,\\& 
\hspace{-1.1 cm}	A_{\dg}(\textrm{I}_{\dg}\Psi ,  \E_h\Phi_{\dg} )-L_{\dg}( \E_h\Phi_{\dg}) 
 = A(\textrm{I}_{\dg}\Psi -\Psi,  \E_h\Phi_{\dg} )+\lambda \sum_{ E \in \mathcal{E}}\dual{[{\Psi}-\textrm{I}_{\dg}\Psi]_E, \{\nabla(\E_h\Phi_{\dg})\nu_E\}_E}_{E} \nonumber 
 \\
	& \quad\quad\quad\hspace{3 cm} - (B(\Psi ,\Psi ,\Psi ,  \E_h\Phi_{\dg}) + C(\Psi ,  \E_h\Phi_{\dg} )).
	\end{align*} 
	The inclusion of jump and average terms in the above displayed identities will modify  \eqref{3.22_new}  as
	\begin{align}\label{3.22_new dG}
	& A_{\dg}({\rm{I}}_{\dg}\Psi , \Phi_{\dg} ) + B({\rm{I}}_{\dg}\Psi,{\rm{I}}_{\dg}\Psi,{\rm{I}}_{\dg}\Psi , \Phi_{\dg} )
	+ C({\rm{I}}_{\dg}\Psi , \Phi_{\dg} )  -L_{\dg}(\Phi_{\dg})  \nonumber 
	 = 
	\sum_{T \in \mathcal{T}}\int_T \boldsymbol{ \eta}_T \cdot (\Phi_{\dg}-  \E_h\Phi_{\dg})\dx \\
	& \quad+ \sum_{ E \in \mathcal{E}_h^{i}}\dual{ \boldsymbol{ \eta}_E ,\{\Phi_{\dg}- \E_h\Phi_{\dg}\}_E}_{E} + (A(\textrm{I}_{\dg}\Psi -\Psi,  \E_h\Phi_{\dg} ) + C({\rm I}_{\dg}\Psi -\Psi, \E_h\Phi_{\dg})) \nonumber \\
	& \quad
	+ (B(\textrm{I}_{\dg}\Psi,\textrm{I}_{\dg}\Psi,\textrm{I}_{\dg}\Psi ,  \E_h\Phi_{\dg} )-B(\Psi ,\Psi ,\Psi ,  \E_h\Phi_{\dg}))	
	+\lambda \sum_{ E \in \mathcal{E}}\dual{[{\Psi}-\textrm{I}_{\dg}\Psi]_E, \{\nabla\Phi_{\dg} \nu_E \}_E}_{E}   \notag\\&\quad+
	\sum_{E \in \mathcal{E}} \frac{\sigma_\dg}{h_E}\dual{[\textrm{I}_{\dg}\Psi- \Psi]_E, [\Phi_{\dg}-  \E_h\Phi_{\dg}]_E}_E   :=T_1 + \cdots +T_6,	 
	\end{align}
	where $\boldsymbol{ \eta}_T:=( 2\epsilon^{-2}(\abs{\textrm{I}_{\dg}\Psi}^2 -1)\textrm{I}_{\dg}\Psi)|_T \text{ on } T \text{ and } \boldsymbol{ \eta}_E: =  [\nabla(\textrm{I}_{\dg}\Psi)\nu_E]_E \text{ on } E.$  The terms $T_1$ to $ T_4 $ are estimated in similar lines to the corresponding terms in Lemma \ref{Lemma for ball to ball map}. Apply Cauchy-Schwarz inequality, Lemma \ref{discrete trace inequality}, Lemmas \ref{Interpolation estimate dG}, \ref{enrichment operator dG} and $ \vertiiidg{\Phi_{\dg}}=1$  to  $T_5$ and $T_6$. 
	\begin{align*}
&T_5:=\sum_{E \in \mathcal{E}}\dual{[\Psi-\textrm{I}_{\dg}\Psi]_E, \{\nabla \Phi_{\dg} \nu_E \}_E}_{E}  \leq
\vertiiidg{\textrm{I}_{\dg}\Psi -\Psi} \vertiiidg{\Phi_{\dg}}
\lesssim h^\alpha \vertiii{\Psi}_{1+\alpha},
\\&
T_6:=\sum_{E \in \mathcal{E}} \frac{\sigma_\dg}{h_E}\dual{[\textrm{I}_{\dg}\Psi- \Psi]_E, [\Phi_{\dg}-  \E_h\Phi_{\dg}]_E}_E	
\leq  \vertiiidg{\textrm{I}_{\dg}\Psi -\Psi} \vertiiidg{\Phi_{\dg}-  \E_h\Phi_{\dg} }
\lesssim h^\alpha \vertiii{\Psi}_{1+\alpha}.
\end{align*}
A combination of the estimates lead to the desired result.
\end{proof}
\noindent The next abstract estimate is analogous to Theorem  \ref{thm 4.1} in Section \ref{A posteriori error estimate} and is useful to establish a reliable and  efficient {\it a posteriori} error estimate for dGFEM.  
\begin{lem}\label{Aposteriori abstract theorem}
	Let $\Psi $ be a regular solution to  \eqref{continuous nonlinear} and $\Psi_{\mathbf{g}} \! \in \! \boldsymbol{\mathcal{{X}}}$. 	 Then, $DN$ is locally Lipschitz continuous at $\Psi$, that is given $R_0>0$, $DN$ restricted to $B(\Psi,R_0)$ is Lipschitz continuous. Moreover, (a) $\displaystyle \gamma:= \sup_{\boldsymbol \eta \in B(\Psi, R_0)} \dfrac{\vertiii{DN(\boldsymbol \eta) - DN(\Psi)}_{\mathcal{L}(\X, \V^*)}}{\vertiii{\boldsymbol \eta- \Psi}_{\rm dG}} < \infty,$
	and (b) there exists a constant $R>0$ such that for all ${\boldsymbol \eta}_{\rm dG} \in B(\Psi, R)$, $\displaystyle  \vertiii{\Psi- {\boldsymbol \eta}_{\rm dG}}_\dg \lesssim \vertiii{N({\boldsymbol \eta}_{\rm dG})}_{\V^*}  +(1+\vertiii{DN({\boldsymbol \eta}_{\rm dG})}_{\mathcal{L}(\X, \V^*)})\vertiii{\Psi_{\mathbf{g}} - {\boldsymbol \eta}_{\rm dG}}_{\rm dG},$
	where the constant in $"\lesssim"$ depends on $\gamma$, continuous inf-sup constant  $\beta$ and Poincar\'e constant,
	and the nonlinear (resp. linearized) operator $N(\cdot)$ (resp. $DN(\cdot)$) is defined in \eqref{continuous nonlinear} (resp. \eqref{2.9}).
\end{lem}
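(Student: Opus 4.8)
The plan is to follow the proof of Theorem~\ref{thm 4.1} essentially verbatim, replacing the Nitsche norm $\vertiiih{\cdot}$ by the dG norm $\vertiiidg{\cdot}$ throughout and invoking Lemmas~\ref{boundedness dG}, \ref{Interpolation estimate dG} and~\ref{enrichment operator dG} wherever the Nitsche-specific auxiliary results were used. First I would establish the local Lipschitz continuity of $DN$ at $\Psi$: since $DN$ is defined on $\X=\mathbf{H}^1(\Omega)$, the computation in~\eqref{DN bound} is unchanged. Expanding $\dual{DN(\boldsymbol\eta)\Theta,\Phi}-\dual{DN(\Psi)\Theta,\Phi}=3B(\boldsymbol\eta,\boldsymbol\eta,\Theta,\Phi)-3B(\Psi,\Psi,\Theta,\Phi)$ and regrouping the cubic terms as a sum of integrals each carrying the factor $\boldsymbol\eta-\Psi$, Lemma~\ref{2.3.17}$(iv)$ gives the bound $\lesssim\epsilon^{-2}\vertiii{\boldsymbol\eta-\Psi}_1(R_0+\vertiii{\Psi}_1)\vertiii{\Theta}_1\vertiii{\Phi}_1$ for $\boldsymbol\eta\in B(\Psi,R_0)$. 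Since $\vertiii{\cdot}_1\lesssim\vertiiidg{\cdot}$ on $\mathbf{H}^1(\Omega)$ (the jump terms in the dG norm only add), this yields both the Lipschitz property and $\gamma<\infty$, i.e.\ part~(a).

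For part~(b) I would repeat the second step of the proof of Theorem~\ref{thm 4.1}. A Taylor expansion of $0=N(\Psi;\Phi)$ about $\boldsymbol\eta_{\dg}$ gives
\begin{align*}
0=N(\boldsymbol\eta_{\dg};\Phi)+\duall{\int_{0}^{1}DN(\Psi+t(\boldsymbol\eta_{\dg}-\Psi))(\Psi-\boldsymbol\eta_{\dg})\dt,\Phi}.
\end{align*}
Introducing $\pm\dual{DN(\Psi)(\Psi-\boldsymbol\eta_{\dg}),\Phi}$, then writing $\Psi-\boldsymbol\eta_{\dg}=(\Psi-\Psi_{\mathbf g})+(\Psi_{\mathbf g}-\boldsymbol\eta_{\dg})$ and inserting $\pm\dual{DN(\boldsymbol\eta_{\dg})(\Psi_{\mathbf g}-\boldsymbol\eta_{\dg}),\Phi}$, exactly as in~\eqref{4.4}--\eqref{2.4}, produces an upper bound for $\dual{DN(\Psi)(\Psi-\Psi_{\mathbf g}),\Phi}$ by $\vertiii{\Phi}_1$ times the sum of $\vertiii{N(\boldsymbol\eta_{\dg})}_{\V^*}$, the Lipschitz difference $\vertiii{DN(\boldsymbol\eta_{\dg})-DN(\Psi)}_{\mathcal L(\X,\V^*)}\vertiii{\Psi_{\mathbf g}-\boldsymbol\eta_{\dg}}_1$, the term $\vertiii{DN(\boldsymbol\eta_{\dg})}_{\mathcal L(\X,\V^*)}\vertiii{\Psi_{\mathbf g}-\boldsymbol\eta_{\dg}}_1$, and the Taylor remainder $\int_{0}^{1}\vertiii{DN(\Psi+t(\boldsymbol\eta_{\dg}-\Psi))-DN(\Psi)}_{\mathcal L(\X,\V^*)}\vertiii{\Psi-\boldsymbol\eta_{\dg}}_1\dt$.

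Since $\Psi_{\mathbf g}\in\boldsymbol{\mathcal{X}}$ forces $\Psi-\Psi_{\mathbf g}\in\V$, the continuous inf-sup condition~\eqref{2.9} supplies, for any small $\delta>0$, a $\Phi\in\V$ with $\vertiii{\Phi}_1=1$ and $(\beta-\delta)\vertiii{\Psi-\Psi_{\mathbf g}}_1\le\dual{DN(\Psi)(\Psi-\Psi_{\mathbf g}),\Phi}$. Because $\Psi-\Psi_{\mathbf g}$ is globally $H^1$ and vanishes on $\partial\Omega$, all its jumps are zero, so $\vertiiidg{\Psi-\Psi_{\mathbf g}}=\vertiii{\nabla(\Psi-\Psi_{\mathbf g})}_{0}\lesssim\vertiii{\Psi-\Psi_{\mathbf g}}_1$; a triangle inequality $\vertiiidg{\Psi-\boldsymbol\eta_{\dg}}\le\vertiiidg{\Psi-\Psi_{\mathbf g}}+\vertiiidg{\Psi_{\mathbf g}-\boldsymbol\eta_{\dg}}$ then converts the inf-sup estimate into a lower bound on $\dual{DN(\Psi)(\Psi-\Psi_{\mathbf g}),\Phi}$ in terms of $\vertiiidg{\Psi-\boldsymbol\eta_{\dg}}$. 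Letting $\delta\to0$, using the definition of $\gamma$ and Lemma~\ref{Poincare type inequality} (with $\vertiii{\Phi}_1=1$), and absorbing the quadratic term $\vertiiidg{\Psi-\boldsymbol\eta_{\dg}}^2$ and the cross term $\vertiiidg{\Psi-\boldsymbol\eta_{\dg}}\,\vertiiidg{\Psi_{\mathbf g}-\boldsymbol\eta_{\dg}}$ into the left-hand side on the ball $R:=\min\{R_0,C/2\}$ (so that $\vertiiidg{\Psi-\boldsymbol\eta_{\dg}}<C/2$), where $C$ is the constant produced above, gives the asserted estimate.

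The only genuinely new point relative to Theorem~\ref{thm 4.1} is that $\boldsymbol\eta_{\dg}\in\X_{\dg}$ is nonconforming and therefore cannot be inserted directly into the continuous inf-sup condition; this is exactly why the conforming comparison function $\Psi_{\mathbf g}\in\boldsymbol{\mathcal{X}}$ is introduced and why the factor $(1+\vertiii{DN(\boldsymbol\eta_{\dg})}_{\mathcal L(\X,\V^*)})\vertiiidg{\Psi_{\mathbf g}-\boldsymbol\eta_{\dg}}$ survives on the right-hand side. I expect the main (and mild) obstacle to be precisely this bookkeeping — checking that the $H^1$-based continuous inf-sup constant $\beta$ still controls the dG norm after passing through $\Psi_{\mathbf g}$; the cubic-form manipulations and the Taylor-remainder bounds are routine consequences of Lemma~\ref{2.3.17}$(iv)$ and the Lipschitz bound established in the first step.
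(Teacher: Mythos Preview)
Your proposal is correct and follows exactly the approach implicit in the paper, which states Lemma~\ref{Aposteriori abstract theorem} as the dG analogue of Theorem~\ref{thm 4.1} without supplying a separate proof. The argument transcribes verbatim: the Lipschitz bound~\eqref{DN bound} involves only $B(\cdot,\cdot,\cdot,\cdot)$ and hence carries over via the discrete Poincar\'e inequality (Lemma~\ref{Poincare type inequality}(2), stated for $H^1(\mathcal{T})$) to give $\vertiii{\cdot}_1\lesssim\vertiiidg{\cdot}$; the Taylor expansion, the splitting through $\Psi_{\mathbf g}$, the continuous inf-sup~\eqref{2.9}, and the absorption of the quadratic term on the ball $R=\min\{R_0,C_4/2\}$ are unchanged. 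One minor remark: Lemmas~\ref{boundedness dG}, \ref{Interpolation estimate dG} and~\ref{enrichment operator dG} are not actually needed here (Theorem~\ref{thm 4.1} uses none of their Nitsche counterparts); they enter only later in the application to Theorem~\ref{ a priori and a posteriori error estimate dG}. Also, your closing observation slightly overstates the novelty: the introduction of $\Psi_{\mathbf g}$ is already forced in the Nitsche case because $\boldsymbol\eta_h\in\X_h$ fails the boundary condition and hence lies outside $\V$; the dG nonconformity is an additional defect handled by the same device, not a new reason for it.
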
 
\noindent For each element $T$ and edge $E$, the volume and edge contributions to the estimators for dGFEM are
\begin{align}
&\vartheta_T^2: = h_T^2 \vertiii{-\Delta \Psi_{\rm dG} + 2\epsilon^{-2}  (\abs{\Psi_{\rm dG}}^2-1)\Psi_{\rm dG}}^2_{0,T}, \,\,\, (\vartheta_E^{\partial })^2:= \frac{1}{h_E} \vertiii{\Psi_{\rm dG} - \mathbf{g}}_{0,E}^2 \text{ for all } E \in \mathcal{E}_h^{\partial},\label{estimator_boundary_edge_dG}\\& \text{and }(\vartheta_E^i)^2 := h_E \vertiii{[\nabla \Psi_{\rm dG} \nu_E]_E}_{0,E}^2 +\frac{1}{h_E} \vertiii{[\Psi_{\rm dG}]_E}_{0,E}^2 \text{ for all } E \in \mathcal{E}_h^{i}\label{estimator_volume_interior_edge_dG} . 
\end{align}
Define the estimator
$ \displaystyle
\vartheta^2:= \sum_{ T \in \mathcal{T}}  \vartheta_T^2 +\sum_{ E \in \mathcal{E}_h^{i}} (\vartheta_E^i)^2 + \sum_{ E \in \mathcal{E}_h^{\partial} }(\vartheta_E^{\partial})^2.
$

\noindent The main result of this section is presented now.
\begin{thm}({\it A priori} and a posteriori error estimates)\label{ a priori and a posteriori error estimate dG}
	Let $\Psi$ be a regular solution of \eqref{continuous nonlinear}. 
	For a sufficiently large  penalty parameter $\sigma_\dg>0$ and a sufficiently small  discretization parameter $h$, there exists a unique solution $\Psi_{\rm dG}$ to the discrete problem \eqref{discrete nonlinear problem dG}  that approximates $\Psi$ such that	
	\begin{enumerate}
		\item (Energy norm estimate) $ 	\vertiii{\Psi-\Psi_{\rm dG}}_{\rm dG} \lesssim h^{\alpha}$, where $\Psi \! \in \! \boldsymbol{\mathcal{{X}}}\cap \mathbf{H}^{1+\alpha}(\Omega) ,$ $0 < \alpha \le 1$ is the index of elliptic regularity,
		\item (A posteriori estimates) There exist $h$-independent positive constants ${\rm C}_{\text{rel}}$ and ${\rm C}_{\text{eff}}$ such that
		\begin{align*}
		{\rm C}_{\text{eff}}	\vartheta \leq	\vertiii{\Psi - \Psi_{\rm dG}}_{\rm dG} \leq  {\rm C}_{\text{rel}} \big( \vartheta+  h.o.t \big),
		\end{align*} 
			where $h.o.t$ expresses terms of higher order.
	\end{enumerate}
\end{thm}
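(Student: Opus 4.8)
The plan is to mirror, essentially verbatim, the two-step strategy already carried out for Nitsche's method in Sections~\ref{A priori error estimate} and~\ref{A posteriori error estimate}, with $\vertiiih{\cdot}$ replaced by $\vertiiidg{\cdot}$, the interpolant ${\rm I}_h$ by ${\rm I}_\dg$ (Lemma~\ref{Interpolation estimate dG}), and the extension operator $\Pi_h$ by the enrichment operator ${\rm E}_h$ (Lemma~\ref{enrichment operator dG}). For the \emph{a priori} energy estimate in part~(1), I would introduce the nonlinear map $\mu_\dg\colon\X_\dg\to\X_\dg$ defined, for $\Phi_\dg\in\X_\dg$, by $\dual{DN_\dg({\rm I}_\dg\Psi)\mu_\dg(\Theta_\dg),\Phi_\dg}=3B({\rm I}_\dg\Psi,{\rm I}_\dg\Psi,\Theta_\dg,\Phi_\dg)-B(\Theta_\dg,\Theta_\dg,\Theta_\dg,\Phi_\dg)+L_\dg(\Phi_\dg)$, which is well defined by the discrete inf--sup condition of Lemma~\ref{discrete inf sup dG} and whose fixed points solve \eqref{discrete nonlinear problem dG}. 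Using the dG analogue of the decomposition \eqref{3.22}, the consistency term is controlled by Lemma~\ref{Lemma for ball to ball map dG} and the cubic remainder by the dG version of \eqref{term_Bh}; then, choosing $R(h)=\mathcal{O}(h^\alpha)$, one shows that for $\sigma_\dg$ large and $h$ small $\mu_\dg$ maps the ball $\mathbb{B}_{R(h)}({\rm I}_\dg\Psi)$ into itself and is a contraction there, so Brouwer's theorem supplies a unique fixed point $\Psi_\dg$ in that ball; a triangle inequality with Lemma~\ref{Interpolation estimate dG} then yields $\vertiiidg{\Psi-\Psi_\dg}\lesssim h^\alpha$.

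For the \emph{a posteriori} estimates in part~(2), I would invoke Lemma~\ref{Aposteriori abstract theorem} with ${\boldsymbol\eta}_\dg=\Psi_\dg$ (whose membership in the relevant ball is guaranteed by part~(1)) and $\Psi_{\mathbf{g}}$ chosen as the Helmholtz-type lifting of Lemma~\ref{psi-psi_g estimate}, with $\nabla\Psi_\dg$ in place of $\nabla\Psi_h$. To bound $\vertiii{N(\Psi_\dg)}_{\V^*}$, pick $\Phi\in\V$ with $\vertiii{\Phi}_1=1$ and split $\Phi=({\rm I}^{SZ}_h\Phi)+(\Phi-{\rm I}^{SZ}_h\Phi)$ via the Scott--Zhang operator of Lemma~\ref{Scott-Zhang interpolation}: testing the first part in \eqref{discrete nonlinear problem dG} and using that ${\rm I}^{SZ}_h\Phi$ is continuous and vanishes on $\partial\Omega$ produces the boundary residual $\sum_{E\in\mathcal{E}_h^\partial}(\vartheta_E^\partial)^2$ together with the interior penalty jump contribution coming from $a_\dg$; for the second part, an elementwise integration by parts (with $-\Delta\Psi_\dg=0$ for $P_1$, retained in \eqref{estimator_boundary_edge_dG} for higher order) produces the volume residual $\vartheta_T$ and the edge residual $h_E\vertiii{[\nabla\Psi_\dg\nu_E]_E}_{0,E}^2$, which a Cauchy--Schwarz step plus the Scott--Zhang approximation bounds controls by the estimator. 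Lemma~\ref{psi-psi_g estimate} then bounds $\vertiiidg{\Psi_{\mathbf{g}}-\Psi_\dg}^2$ by $(\vartheta_{hot}^\partial)^2+\sum_{E\in\mathcal{E}_h^\partial}(\vartheta_E^\partial)^2$, and, combined with the factor $(1+\vertiii{DN(\Psi_\dg)}_{\mathcal{L}(\X,\V^*)})$ in \eqref{thm_4.4}, this yields the higher order terms, proving reliability. Efficiency follows from the dG versions of the local efficiency Lemmas~\ref{lem 3.1}--\ref{local efficiency for L2 estimate} announced in Remark~\ref{modified local efficiency}, together with the elementary bounds $\sum_{E\in\mathcal{E}_h^\partial}(\vartheta_E^\partial)^2=\sum_{E\in\mathcal{E}_h^\partial}h_E^{-1}\vertiii{\Psi_\dg-\mathbf{g}}_{0,E}^2\le\vertiiidg{\Psi-\Psi_\dg}^2$ and $\sum_{E\in\mathcal{E}_h^i}h_E^{-1}\vertiii{[\Psi_\dg]_E}_{0,E}^2\le\vertiiidg{\Psi-\Psi_\dg}^2$, the latter because $[\Psi]_E=0$ on interior edges.

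The main obstacle, exactly as elsewhere in the paper, will be the bookkeeping of the extra average and jump terms contributed by $a_\dg$ but absent from $a_h$: the symmetrization terms $\sum_{E}\dual{\{\nabla(\cdot)\nu_E\}_E,[\cdot]_E}_E$ and the penalty terms $\sum_E\frac{\sigma_\dg}{h_E}\dual{[\cdot]_E,[\cdot]_E}_E$. In the \emph{a priori} argument these must be regrouped using $[{\rm E}_h\Phi_\dg]_E=0$, $[\Psi]_E=0$, and $[{\rm I}_\dg\boldsymbol\xi]_E$-cancellations so that Cauchy--Schwarz against Lemmas~\ref{Interpolation estimate dG} and~\ref{enrichment operator dG} closes the estimates (precisely the point flagged in the proofs of Lemma~\ref{discrete inf sup dG} and Lemma~\ref{Lemma for ball to ball map dG}); in the \emph{a posteriori} argument one must check that, when tested against $\Phi-{\rm I}^{SZ}_h\Phi$ and against $\Psi_{\mathbf{g}}-\Psi_\dg$, these terms regenerate exactly the estimator contributions $\vartheta_E^i$ and $\vartheta_E^\partial$ of \eqref{estimator_boundary_edge_dG}--\eqref{estimator_volume_interior_edge_dG} and nothing of lower order. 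Taking $\sigma_\dg$ large enough to keep the coercivity of Lemma~\ref{boundedness dG} throughout, and $h$ small enough to meet the fixed-point and inf--sup thresholds, the remaining steps are routine adaptations of the Nitsche proofs.
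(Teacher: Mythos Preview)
Your proposal is correct and follows essentially the same route as the paper: define the nonlinear map $\mu_\dg$ and run the Brouwer fixed-point argument via Lemmas~\ref{discrete inf sup dG} and~\ref{Lemma for ball to ball map dG} for part~(1), and invoke Lemma~\ref{Aposteriori abstract theorem} together with the Scott--Zhang split and Lemma~\ref{psi-psi_g estimate} for part~(2), with the efficiency of the new interior jump term coming from $[\Psi]_E=0$. Your identification of the extra $a_\dg$ jump/average terms as the only real bookkeeping issue, and how they are absorbed using $[{\rm E}_h\Phi_\dg]_E=0$ and $[\Psi]_E=0$, matches exactly what the paper highlights in its proof sketch.
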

\begin{proof}
	The basic ideas of proofs of both {\it a priori} and {\it a posteriori} error estimates follow from Theorems \ref{energy and $L^2$ norm error estimate} and \ref{reliability and efficiency}. The modifications for the case of dGFEM are sketched for the sake of clarity. 
	
	\medskip

	\noindent \textit{1.} \textit{(Energy norm estimate):} 
The energy norm error estimate in {\it a priori } error analysis is proved using Brouwer's fixed point theorem. The non-linear map \cite{DGFEM} $\mu_{\rm dG}: \X_{\dg} \rightarrow \X_{\dg}$ is defined in this case as
	\begin{align*}
		\dual{DN_\dg({{\rm I}_{\rm dG}\Psi}) \mu_{\dg}(\Theta_{\rm dG}),\Phi_{\rm dG} }
	= 3B(\textrm{I}_{\rm dG}\Psi, \textrm{I}_{\rm dG}\Psi,\Theta_{\rm dG},\Phi_{\rm dG}) - B(\Theta_{\rm dG},\Theta_{\rm dG}, \Theta_{\rm dG},\Phi_{\rm dG}) +L_h(\Phi_{\rm dG}).
	\end{align*}
The proof now follows analogous to Theorem \ref{energy and $L^2$ norm error estimate}, using Lemmas \ref{discrete inf sup dG} and \ref{Lemma for ball to ball map dG}.

	\medskip
	
	\noindent \textit{2.} \textit{(A posteriori estimate):}    Lemma \ref{Aposteriori abstract theorem} and  techniques used in proof of Theorem  \ref{reliability and efficiency} lead to {\it a posteriori} estimates. The jump and average terms of $A_\dg(\cdot,\cdot)$ in the expansion of $N_\dg(\Psi_{\rm dG}; \textrm{I}^{SZ}_h\Phi)$ in \eqref{discrete nonlinear problem dG} will modify \eqref{4.7} to
	\begin{align*}
	N(\Psi_{\rm dG}; \textrm{I}^{SZ}_h\Phi)=\lambda \sum_{ E \in \mathcal{E}_h^{i}} \dual{[\Psi_{\rm dG}]_E,\{\nabla(\textrm{I}^{SZ}_h\Phi)  \nu_E\}_E}_E+ \lambda\sum_{ E \in \mathcal{E}_h^{\partial} } \dual{\Psi_{\rm dG}- \mathbf{g},\nabla (\textrm{I}^{SZ}_h\Phi) \nu_E }_{E}.
	\end{align*}
 The Cauchy-Schwarz inequality, Lemmas \ref{discrete trace inequality}($ii$) and \ref{Scott-Zhang interpolation} plus  $\vertiii{\Phi}_1=1$ yield 
	\begin{align*}
		N(\Psi_{\rm dG}; \textrm{I}^{SZ}_h\Phi)  \lesssim  \big(\sum_{E \in \mathcal{E}_h^{i}}    \frac{\sigma_\dg}{h_E} \vertiii{[\Psi_{\rm dG}]_E}_{0,E}^2+\sum_{E \in \mathcal{E}_h^{\partial}}    \frac{\sigma_\dg}{h_E} \vertiii{\Psi_\dg - \mathbf{g}}_0^2 \big)^{\frac{1}{2}} 
	\end{align*}
and	leads to interior edge estimator term $ \sum_{E \in \mathcal{E}_h^{\partial}} \frac{1}{h_E} \vertiii{[\Psi_{\rm dG}]_E}_{0,E}^2$.
	Moreover, a use of $[\Psi]_E=0$ for all $E \in \mathcal{E}_h^{i}$ shows $ \sum_{ E \in \mathcal{E}_h^{i}}\frac{1}{h_E} \vertiii{[\Psi_\dg]_E}_{0,E}^2 \leq \vertiiidg{\Psi_\dg - \Psi}$ and establishes the efficiency bound. The remaining part of the proof uses ideas similar to the proof of Theorem  \ref{reliability and efficiency}. 
\end{proof}

\section{Numerical results}\label{Numerical results for a priori error analysis}
In this section, we present  some numerical experiments that confirm the theoretical results obtained in Sections \ref{A priori error estimate}-\ref{Extension to dG method}, and illustrate the practical performances of the error indicators in adaptive mesh refinement for both Nitsche's method and dGFEM.
	\subsection{ Preliminaries}
\begin{itemize}
	\item The uniform refinement process divides each triangle in the triangulation of  $\bar{\Omega}$  into four similar triangles for subsequent mesh refinements using red refinement. 
	\item Let $e(n)$ and $h(n)$ (resp. $e({n-1})$ and $h({n-1})$) denote the error and the discretization parameter at the $n$-th (resp. $n-1$-th) level of uniform refinements, respectively. The convergence rate at $n$-th level is defined by $\displaystyle \alpha_n:=log(e(n)/e({n-1}))/log(h(n)/h({n-1}))$.
	\item The penalty parameters  $\sigma=\sigma_\dg=10$ is chosen for the numerical experiments.
	\item Numerical experiments are performed for different values of $\epsilon$ to illustrate the efficacy of the methods.
	\item
	Newton's method is employed to compute the approximated solutions of the discrete nonlinear problem \eqref{discrete nonlinear problem}. The Newton's iterates for Nitsche's method (see \cite{DGFEM} for Newton's iterates in dGFEM) are given by $\Psi^n_{h}, \,\, n= 1,2, \ldots$  
	\begin{align}\label{Newtons iterate}
	A_h(\Psi^n_{h}, \Phi_h) + 3B(\Psi^{n-1}_{h},\Psi^{n-1}_{h},\Psi^n_{h},\Phi_{h}) +C(\Psi^n_{h}, \Phi_h)= 2 B(\Psi^{n-1}_{h},\Psi^{n-1}_{h},\Psi^{n-1}_{h},\Phi_{h}) +L_h(\Phi_{h}).
	\end{align}
	The tolerance in the Newton's method is chosen as $10^{-8} $ in the numerical experiments unless mentioned otherwise.
\end{itemize}
\begin{rem}
	It can be established that the Newton iterates in \eqref{Newtons iterate} converge  quadratically to the discrete solution \cite[Theorem $3.6$]{DGFEM}. 
\end{rem} 
\begin{figure}[h]
	\centering
	\subfloat[]{\includegraphics[width=3.1cm,height=3.3cm]{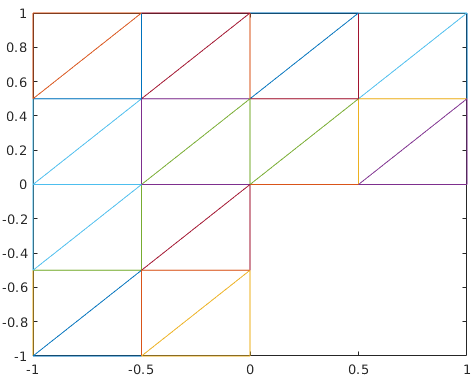}\label{triangulation2}} 
	\hspace{0.5cm}
	\subfloat[]{\includegraphics[width=3.1cm,height=3.3cm]{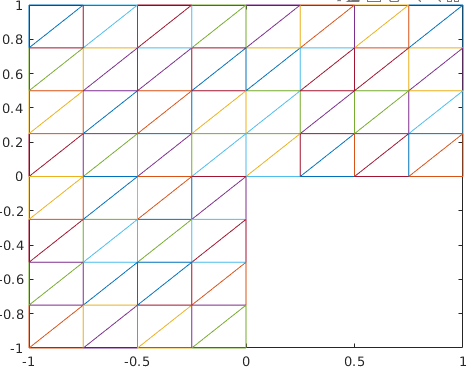}\label{triangulation3}}
	\hspace{0.5cm}
	\subfloat[]{\includegraphics[width=3.1cm,height=3.4cm]{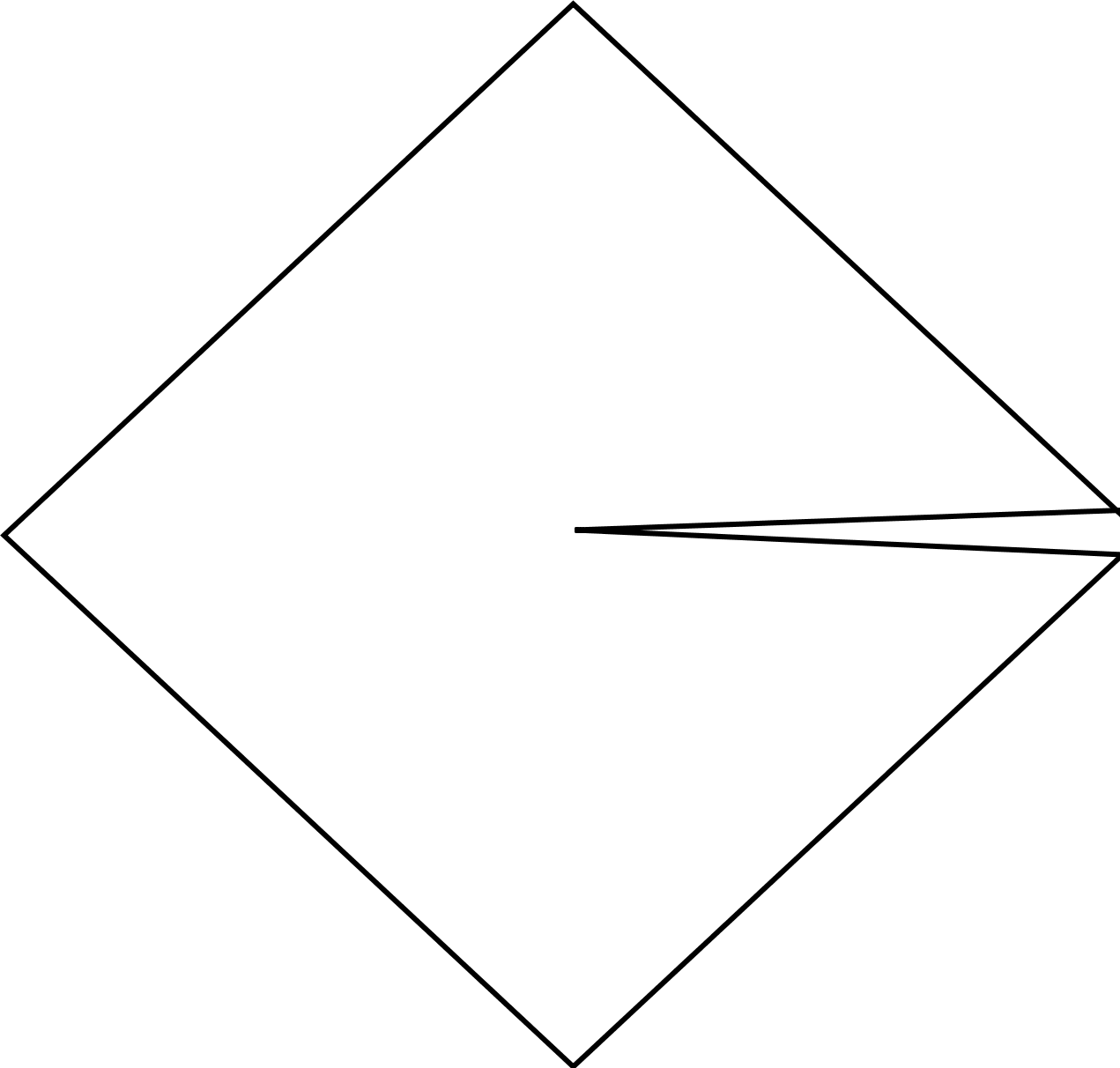}\label{Slit_domain}} 
	\caption{\protect\subref{triangulation2}Initial triangulation $\mathcal{T}_0$ of L-shape domain in Example \ref{Exact_sln_Lshape_apriori} and \protect\subref{triangulation3} its  uniform refinement.   \protect\subref{Slit_domain} Slit domain in Example \ref{Slit_domain_dG}}
	\label{Lshape_triangulations and slit domain discrete solution}
\end{figure}
\begin{figure}[h]
	\centering 
	\subfloat[]{\includegraphics[width=6cm,height=4.5cm]{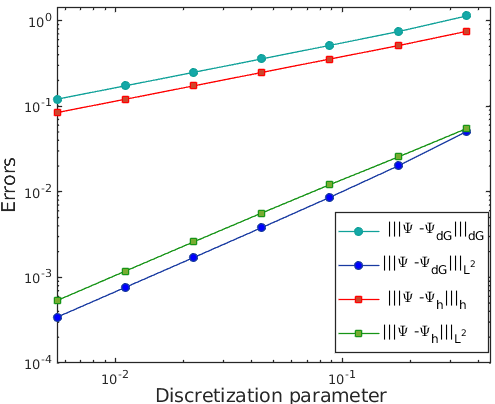}\label{Apriori_result_Lslape_domain}}
	\hspace{0.8cm}
	\subfloat[]{\includegraphics[width=6cm,height=4.5cm]{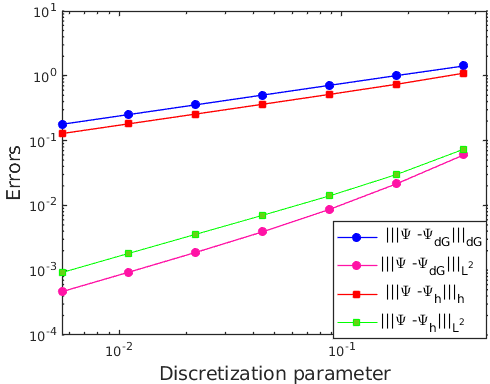}\label{Apriori_result_Slit_domain}}	
	\caption{ Convergence history ({\it a priori} estimates)  for Nitsche's method and dGFEM for \protect \subref{Apriori_result_Lslape_domain} Example \ref{Exact_sln_Lshape_apriori}, with $\epsilon=0.4$, and \protect \subref{Apriori_result_Slit_domain} Example \ref{Slit_domain_dG}, with $\epsilon=0.6$.}
	\label{Apriori_convergence_rates}
\end{figure}  
\subsection{ Example on a $L$-shaped domain}\label{Exact_sln_Lshape_apriori}
Consider \eqref{continuous nonlinear strong form} on a non-convex L-shape domain $\Omega = (-1, 1 )\times (-1,1) \setminus [0,1] \times [-1,0]$.
For the manufactured solution $	u= r^{{2}/{3}}\sin({2 \theta}/{3}),$ $v= r^{1/2}\sin(\theta/2)$,
where $(r, \theta)$ denote the system of polar coordinates, compute the corresponding right-hand side $\mathbf{f}$ and the non-homogeneous Dirichlet boundary condition $\mathbf{g}$. In this case, the exact solutions $\Psi \in \mathbf{H}^{1+1/2-\kappa}(\kappa \!> \! 0)$ \cite{grisvard1992}, and the theoretically expected rate of error reduction is $O(h^{1/2})$ and $O(h)$ in the energy norm and  $\mathbf{L}^2$ norm (see Theorems \ref{energy and $L^2$ norm error estimate} and \ref{ a priori and a posteriori error estimate dG}). 
 Figure \ref{triangulation2} and \ref{triangulation3} display the initial triangulation and its uniform refinement. The initial
guess for the Nitsche's method (resp. dGFEM) is chosen as $\Psi_h^0 \in \X_{h}$ (resp. $\Psi_\dg^0 \in \X_{\dg}$), where $\Psi_h^0$ solves $A_h(\Psi_h^0, \Phi_h)= L_h(\Phi_h) \text{  for all } \Phi_h \in \X_{h}$ (resp. $A_\dg(\Psi_\dg^0, \Phi_\dg)= L_\dg(\Phi_\dg) \text{  for all } \Phi_\dg \in \X_{\dg}$), 
and the linear form $L_h(\cdot)$ (resp. $L_\dg(\cdot)$) is modified to incorporate the information on $\mathbf{f}$. 
The approximations to the discrete solution to \eqref{discrete nonlinear problem} are obtained using the Newton's method defined in \eqref{Newtons iterate}. 
Figure \ref{Apriori_result_Lslape_domain} presents the convergence rates in energy norm and $\mathbf{L}^2$ norm,  with $\epsilon=0.4$, for both Nitsche's method and dGFEM. 
\subsection{ Benchmark example on unit square domain }\label{Luo_exampple}
Consider \eqref{continuous nonlinear} on a convex domain $\Omega=(0,1)\times (0,1)$ with the Dirichlet boundary condition \cite{MultistabilityApalachong} given by   
$\displaystyle 
\mathbf{g}=
\begin{cases} 
(\textit{T}_{d}(x),0) & \text{on}\,\,\,\, y=0 \,\,\,\,\text{and} \,\,\,\, y=1 \\
(- \textit{T}_d(y),0)  & \text{on}\,\,\,\, x=0 \,\,\,\,  \text{and} \,\,\,\,  x=1
\end{cases}
$,
where the parameter $d=3 \epsilon$ with $\epsilon = 0.02$ and the trapezoidal shape function $\textit{T}_d:[0,1]\rightarrow {\mathbb{R}}$ is defined by
$\displaystyle 
\textit{T}_d(t)=
\begin{cases} 
t/d, & 0 \leq t \leq d \\
1, &  d \leq  t \leq 1- d  \\
(1-t)/d, & 1- d \leq t \leq 1
\end{cases}
$. 	See \cite{MultistabilityApalachong, DGFEM} for details of construction of a suitable initial guess of Newton’s iterates in this example.
\begin{table}[H]
	\centering
	\begin{tabular}{c c  c  c  c c} 
		\hline
		$h$    &     Energy    &  $\vertiiih{\Psi_{h}^{n}-\Psi_{h}^{n-1}}$ &  Order &  $\vertiii{\Psi_{h}^{n}-\Psi_{h}^{n-1}}_{\mathbf{L}^2}$ &  Order  \\ [0.5ex] 
		\hline\hline
		0.0220   & 79.2401782   & 1.84603412 &-   & 0.92144678E-2   & -  \\
		0.0110   & 78.2908458  & 0.93391985 & 0.98305855 & 0.29215012E-2 & 1.65719096 \\
		0.0055    & 78.0391327  & 0.46906124 & 0.99352244 & 0.85867723E-3  & 1.76652203  \\
		0.0027    & 77.9747996 & 0.23603408 & 0.99078112 & 0.22993326E-3  & 1.90090075   \\[0.5ex]
		\hline \vspace{-0.4 cm}\\ 
		0.0220      & 87.9041386    & 1.86143892 &-   & 0.95840689E-2   & -  \\
		0.0110   & 86.9334857 &   0.94131381 & 0.98367059  & 0.29869842E-2 & 1.68194864  \\
		0.0055    & 86.6764846 & 0.47271729 & 0.99369812 & 0.87222395E-3  &   1.77591911  \\
		0.0027    & 86.6108341 &0.23784715 & 0.99094285 & 0.23307142E-3  &  1.90392648   \\
		\hline		
	\end{tabular}
	\caption{ Numerical energy, errors and convergence rates for  D1 and R1 solutions, respectively in energy and $\mathbf{L}^2$ norms for $\epsilon=0.02$.
	}
	\label{table:NitscheD1}		
\end{table}
\noindent Table \ref{table:NitscheD1} presents the computed energy, error in energy and $\mathbf{L}^2$ norms for numerical approximation of the diagonal D1 and rotated R1 solutions, respectively obtained using the Nitsche's method in \eqref{discrete nonlinear problem}. The orders of convergence agrees with the theoretical orders of convergence obtained in 
\cite{MultistabilityApalachong, DGFEM, Tsakonas}. For the corresponding results for dGFEM, see \cite[Tables 5, 6]{DGFEM}.
\subsection{ Example on a slit domain}\label{Slit_domain_dG}
Let $\Omega$ be the slit domain $\{(x,y) \in \mathbb{R}^2: \abs{x}+\abs{y}<1 \} \setminus ([0,1] \times \{0\})$ (see Figure \ref{Slit_domain}). Select the non-homogeneous Dirichlet boundary data $\mathbf{g}$ and the right-hand side $\mathbf{f}$ so that the manufactured solution is given by 
	$u(r, \theta)=v(r, \theta)= r^{1/2}\sin(\theta/2)- (1/2)(r\sin(\theta))^2$. 
	Figure \ref{Apriori_result_Slit_domain} presents the convergence history in energy norm, $\mathbf{L}^2$ norm,  with the parameter value $\epsilon=0.6$, for Nitsche's method and dGFEM. The rates are  approximately $0.5004$ (resp. $0.9846$) in energy norm ($\mathbf{L}^2$ norm). 
\subsection{ Adaptive mesh-refinement}
	\begin{itemize}
			\item For the adaptive refinement, the order of convergence of error and estimators are related to total number of unknowns ($\text{Ndof}(l)$). 
			 Let $e(l)$ and $\text{Ndof}(l)$ be the error and total number of unknowns at the $l-$th level refinement, respectively. The convergence rates are calculated as 
	\begin{align*}
	\text{Order}_e(l):= \frac{\log(e(l-1)/e(l))}{\log(\text{Ndof}(l)/\text{Ndof}(l-1))} \qquad \text{and} \qquad	\text{Order}_{\vartheta}(l):= \frac{\log(\vartheta(l-1)/\vartheta(l))}{\log(\text{Ndof}(l)/\text{Ndof}(l-1))}.
	\end{align*}

	\medskip
	
	\item Given an initial triangulation $\mathcal{T}_0,$ run the steps \textbf{SOLVE}, \textbf{ESTIMATE}, \textbf{MARK} and \textbf{REFINE}  successively for different levels $l=0,1,2,\ldots$ 
	
	\medskip
	
	\textbf{SOLVE} Compute the solution $\Psi_l:=\Psi_h$ (resp. $\Psi_l:=\Psi_\dg$) of the discrete problem \eqref{discrete nonlinear problem} (resp. \ref{discrete nonlinear problem dG})  for the triangulation $\mathcal{T}_l$. 
	
	\medskip
	
	\textbf{ESTIMATE} Calculate the error indicator $\displaystyle 	\varXi_{T,l}:=\big(\vartheta_T^2 + \sum_{ E \in \partial T \cap \mathcal{E}_h^{i}} (\vartheta^i_E)^2 + \sum_{E \in   \partial T \cap \mathcal{E}_h^{\partial }}(\vartheta_E^{\partial})^2 \big)^\frac{1}{2} $ for each element $T \in \mathcal{T}_l.$ 
	Recall the volume and edge estimators for Nitsche's method (resp. dGFEM) given by 
\eqref{estimator_volume_interior_edge}-\eqref{estimator_boundary_edge} (resp. \eqref{estimator_boundary_edge_dG}-\eqref{estimator_volume_interior_edge_dG}).
	
\medskip

	\textbf{MARK} 
	 For next refinement, choose the elements $T \in \mathcal{T}_l$  using D\"orfler marking such that  $ 0.3\sum_{ T \in {\mathcal{T}_l}}\varXi_{T, l}^2 \\ \leq \sum_{ T \in \tilde{\mathcal{T}}}\varXi_{T, l}^2$
	and collect those elements to construct a subset $\tilde{\mathcal{T}} \subset \mathcal{T}_l$.
	
	\medskip
	
	\textbf{REFINE} Compute the closure of $\tilde{\mathcal{T}}$ and use newest vertex bisection \cite{Stevenson} refinement strategy to construct the new triangulation $\mathcal{T}_{l+1}$.  
\end{itemize}
	Consider \eqref{continuous nonlinear strong form} in L-shaped domain (Figure \ref{triangulation2})  with the manufactured solution presented in Example \ref{Exact_sln_Lshape_apriori} and apply the adaptive refinement algorithm.
The estimator is modified as 	$\vartheta_T^2:= h_T^2 \vertiii{\mathbf{f}- 2\epsilon^{-2}(\abs{\Psi_h}^2- 1 )\Psi_h}_{0,T}^2$ (resp. $\vartheta_T^2:= h_T^2 \vertiii{\mathbf{f}- 2\epsilon^{-2}(\abs{\Psi_\dg}^2- 1 )\Psi_\dg}_{0,T}^2$) for Nitsche's method (resp. dGFEM) and this takes into account the effect of the non-zero right-hand side $\mathbf{f}$ calculated using the manufactured solution.
	\noindent 
	 Figures \ref{Exact_solution_uh} and \ref{Exact_solution_vh} (resp. Figures \ref{Exact_solution_uh_dG} and \ref{Exact_solution_vh_dG}) plot the discrete solutions, $u_h$ and $v_h$ of the Nitsche's method (resp. dGFEM), respectively, with the parameter $\epsilon=0.4$, and display the adaptive refinement near the vicinity of the re-entrant corner of the L-shaped domain.
	Table \ref{table:Error_estimator} displays the computational error, estimator and convergence rates for uniform and adaptive mesh refinement for $\epsilon=0.4$. 
	\noindent  It
	is observed from Table \ref{table:Error_estimator} that we have a suboptimal empirical convergence rate (calculated with respect to Ndof) of $0.25$ for uniform mesh refinement and an improved optimal empirical convergence rate of $0.5$ for adaptive mesh-refinement.  Further, in the adaptive refinement process,
	the number of mesh points required to achieve convergence is significantly reduced compared to uniform meshes and the convergence is faster than the uniform refinement process. Figure \ref{Error_estimators_ceff} displays the convergence behavior of the error and estimator along with the efficiency constant $C_{eff}$ plot, as a function of the total number of degrees of freedoms for $\epsilon=0.2, 0.8$. Here, $C_{eff}$ is the ratio between computed estimators and errors, which remains constant after the first few refinement levels. 

\medskip

	Figure \ref{Adaptive_solution_D1} displays the discrete solutions (diagonal D1 and rotated R1) and the adaptive mesh refinements in the square domain $\Omega = (0,1) \times (0,1)$, for Example \ref{Luo_exampple}.
Here, we observe adaptive mesh refinements near the defect points \cite{MultistabilityApalachong} of the domain (four corner points). Note that the estimator  tends to zero as the number of degrees of freedom (Ndof) increases. Figure \ref{Estimators} (resp. Figure \ref{Estimators_dG}) is the estimator vs Ndof plot for various values of $\epsilon$ for the diagonal, D1 solution obtained using Nitsche's method (resp. dGFEM). The tolerance used for Newton's method convergence is $10^{-6}$ and it is observed that for a fixed value of the discretization parameter $h$, the number of Newton iterations required for the convergence increases as the value of $\epsilon$ decreases. 
Observe that the rate of decay of the estimators is slower for smaller values of $\epsilon$ .
\begin{rem}
The $h$-$\epsilon$ dependency, discussed in \cite{DGFEM} has been reflected for adaptive refinement in this article, in terms of Ndof-$\epsilon$ dependency. It is observed in \cite{DGFEM} that errors
	are sensitive to the choice of discretization parameter as $\epsilon$ decreases. 
	\end{rem} 
\medskip

Figure \ref{Slit_domain_triangulation_Nitsche} (resp. Figure \ref{Slit_domain_triangulation_dG}) display the discrete solution corresponding to the Example \ref{Slit_domain_dG} and adaptive mesh-refinements, near the singularity at the origin for the parameter value $\epsilon=0.6$ (resp. $\epsilon=1$),  for Nitsche's method (resp. dGFEM). Figure \ref{Slit_domain_Error_estimator_Nitsche} (resp. Figure \ref{Slit_domain_Error_estimator}) shows the convergence history of errors in energy norm and estimators, for both uniform and adaptive refinements, for Nitsche's method (resp. dGFEM). 
 A sub-optimal empirical convergence rate $1/3$ for uniform refinement, and an improved empirical convergence rate $0.5 $, for adaptive mesh refinement, are obtained as a function of degrees of freedom for both Nitsche's method and dGFEM.
  \begin{figure}[H]
 	\centering
 	\subfloat[]{\includegraphics[width=3.5cm,height=3.5cm]{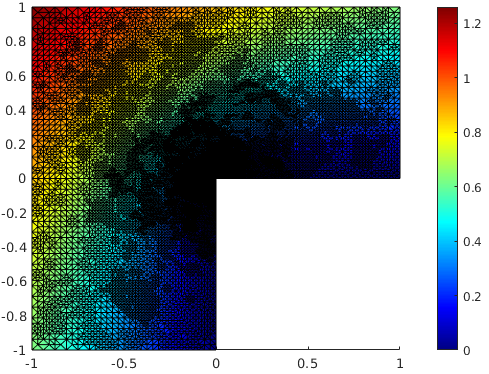}\label{Exact_solution_uh}} 
 	\hspace{0.2 cm}
 	\subfloat[]{\includegraphics[width=3.3cm,height=3.5cm]{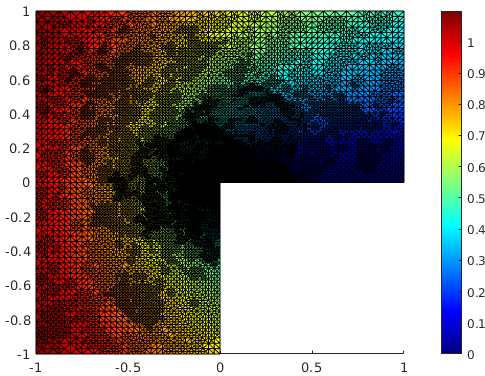}\label{Exact_solution_vh}}
 	\hspace{0.2 cm}
 	\subfloat[]{\includegraphics[width=3.3cm,height=3.5cm]{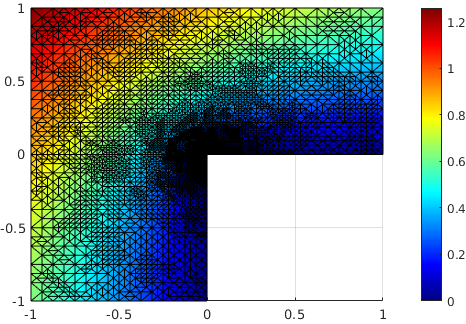}\label{Exact_solution_uh_dG}}
 	\hspace{0.2 cm}	
 	\subfloat[]{\includegraphics[width=3.3cm,height=3.5cm]{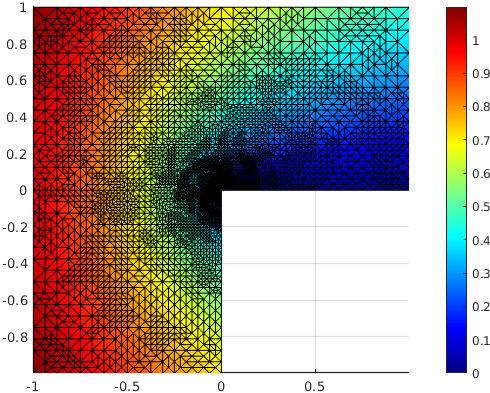}\label{Exact_solution_vh_dG}} 	
 	\caption{ Adaptive mesh refinements: \protect\subref{Exact_solution_uh} $u_h$, \protect\subref{Exact_solution_vh} $v_h$  for Nitsche's method and,  \protect\subref{Exact_solution_uh_dG} $u_h$, \protect\subref{Exact_solution_vh_dG} $v_h$  for dGFEM for  Example \ref{Exact_sln_Lshape_apriori} with $\epsilon=0.4$.  }
 	\label{Lshape_triangulations}
 \end{figure}
\begin{table}[H]
	\begin{tabular}{c c c c c c c c c c c} 
		\hline\\[-3.5mm] 
		\multicolumn{9}{c }{Uniform refinement \quad\quad\quad\quad\quad\quad\quad\quad\quad\quad\quad\quad Adaptive refinement} \\[0.5ex] 
		\hline \\[-4mm]
		Ndof    &  Error & Order$_{e}$ & $\vartheta$ &  Order$_\vartheta$ & 	&	Ndof    &  Error &  Order$_{e}$ &  $\vartheta$ &  Order$_\vartheta$  \\ [0.5ex] 
		\hline\hline\\[-4mm]
		42 &   0.74880  &-      & 2.14850 & - &	& 42  &0.74880  &-       & 2.14850  & - \\
		130 &  0.50988  &0.3401 & 1.42668 & 0.3623 &  &284  & 	0.20381 & 0.6808  & 0.74811 & 0.5519 \\
		450 &  0.35592  &0.2894 & 0.98209 & 0.3007 &  &1298 & 0.07579 & 0.6509 & 0.34115 & 0.5167 \\
		1666 & 0.24796  &0.2761 & 0.68290 & 0.2775 &  &2958 & 0.04662 & 0.5899 &0.22668  &	0.4962 \\
		6402&  0.17274  &0.2685 & 0.47546 & 0.2689 &  &6732 & 0.02982 & 0.5434 &0.15020 &0.5004 \\
		25090& 0.12053  &0.2634 & 0.33121 & 0.2647 &  &14792 &0.01956 & 0.5356  &0.10090 & 0.5053\\
		99330& 0.08426  &0.2601 & 0.23099 & 0.2618 &  &21936 &	0.01597  &0.5146  &0.08303  &0.4945 \\
		395266&0.05902  & 0.2577& 0.16138 & 0.2596 &  &47326 &0.01071  &0.5194  &0.05621  &0.5074 \\
		\hline		
	\end{tabular}
	\caption{Numerical errors, estimators and experimental convergence rates for uniform and adaptive mesh refinement for $\epsilon=0.4.$}
	\label{table:Error_estimator}	
\end{table}	
 \begin{figure}[H]
	\centering 
	\includegraphics[width=9cm,height=6.5cm]{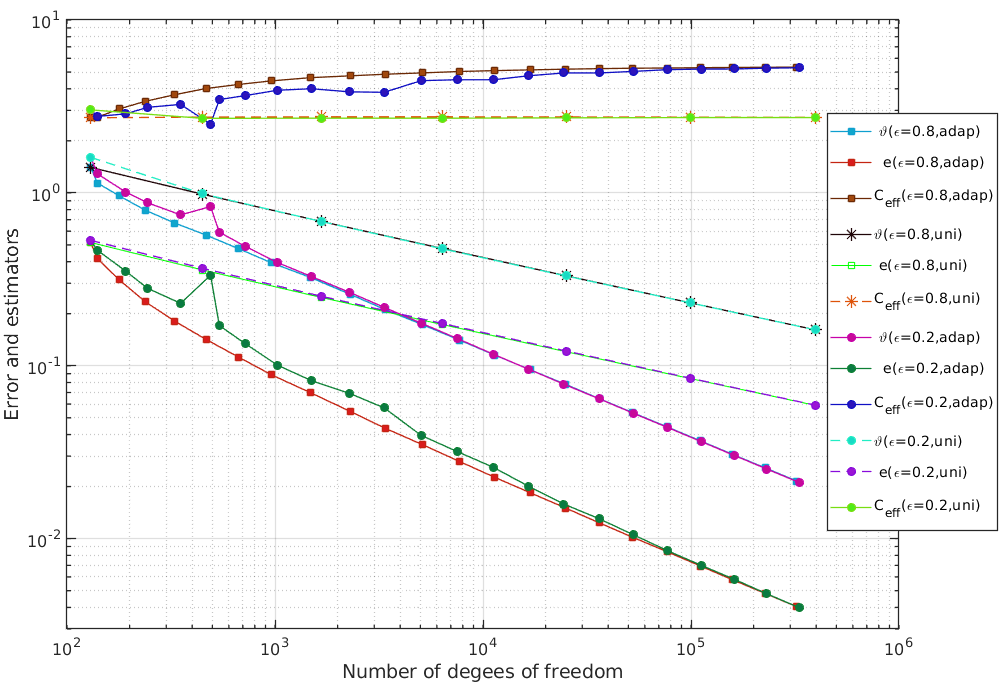}\label{error_estimator_rel_constant} 	
	\caption{ Ndof versus $e$, $\vartheta$ and $C_{eff}$ for L-shape domain in Example \ref{Exact_sln_Lshape_apriori}.
	}
	\label{Error_estimators_ceff}
\end{figure}
\begin{figure}[H]
	\centering
	\subfloat[]{\includegraphics[width=3.7cm,height=3.4cm]{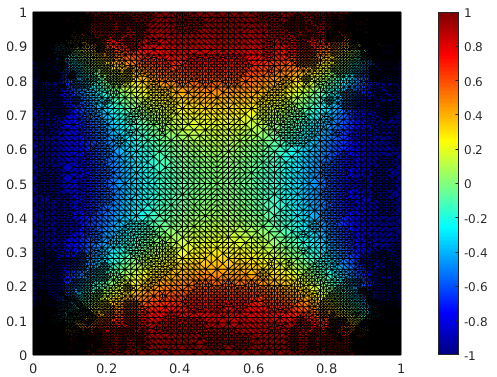}\label{uD1}} 
	\subfloat[]{\includegraphics[width=3.7cm,height=3.4cm]{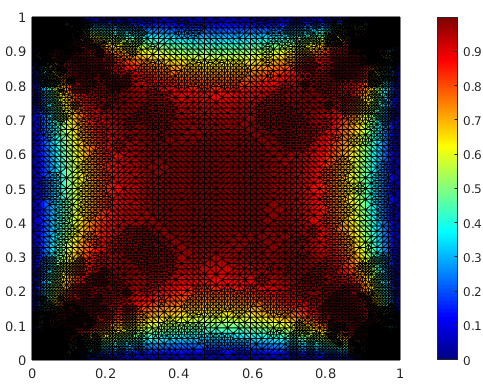}\label{vD1}}
	\subfloat[]{\includegraphics[width=3.7cm,height=3.4cm]{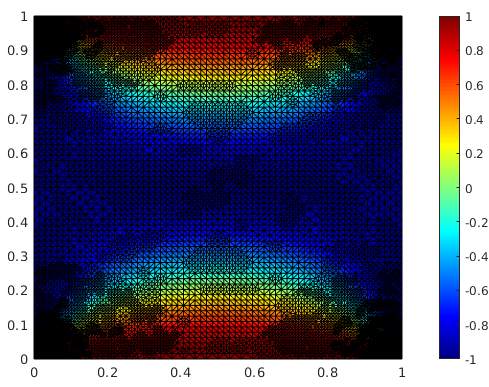}\label{uR1}} 
	\subfloat[]{\includegraphics[width=3.7cm,height=3.4cm]{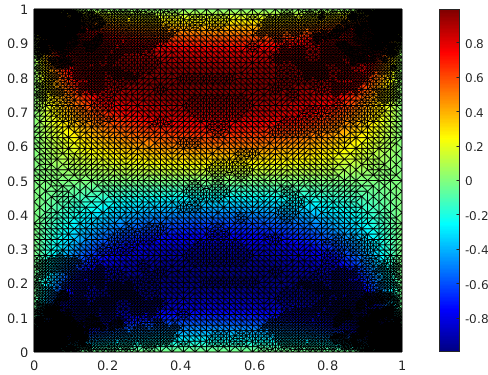}\label{vR1}}
	\caption{Adaptive mesh refinements: \protect \subref{uD1} $u_h$, \protect \subref{vD1} $v_h$ for D1 solution. Adaptive mesh refinements: \protect\subref{uR1} $u_h$, \protect\subref{vR1} $v_{h}$ for R1 solution of Example \ref{Luo_exampple}. 
	}
	\label{Adaptive_solution_D1}
\end{figure}  
	\begin{figure}[H]
	\centering 
	\subfloat[]{\includegraphics[width=7cm,height=6cm]{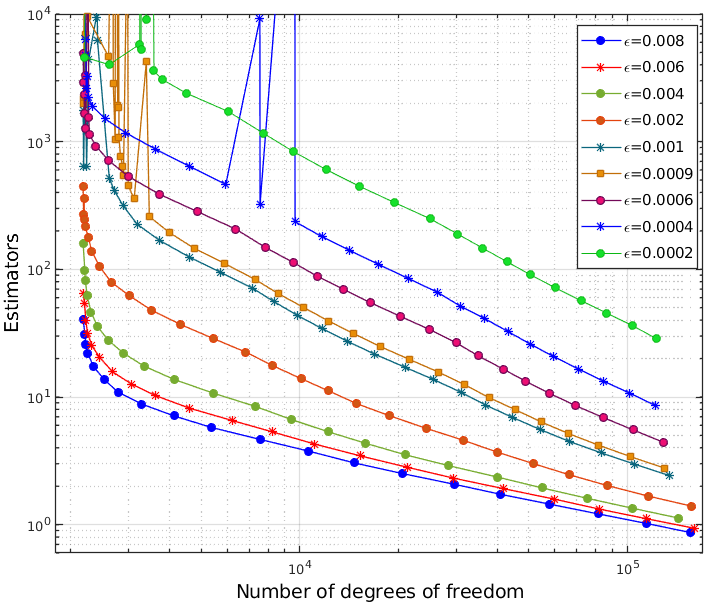}\label{Estimators}}
	\hspace{0.4cm}
	\subfloat[]{\includegraphics[width=7cm,height=6cm]{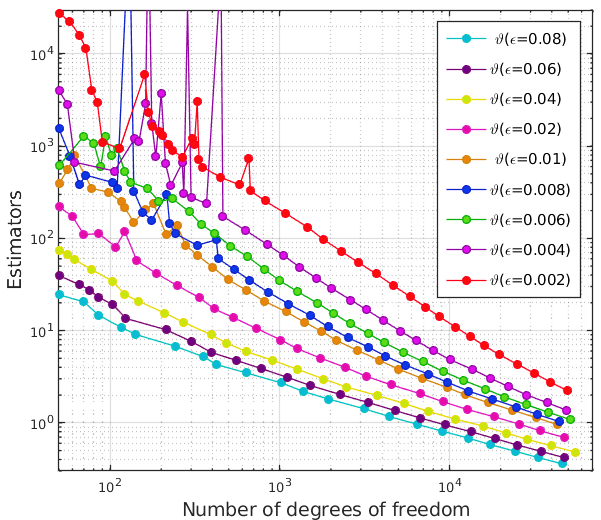}\label{Estimators_dG}}	
	\caption{ Ndof vs estimators plot for various  values of $\epsilon$ in square domain Example \ref{Luo_exampple} for \protect \subref{Estimators}  Nitsche's method and \protect \subref{Estimators_dG} dGFEM. 
	}
	\label{Error_estimators}
\end{figure}  
\begin{figure}[H]
	\centering
	\subfloat[]{\includegraphics[width=4.8cm,height=3.7cm]{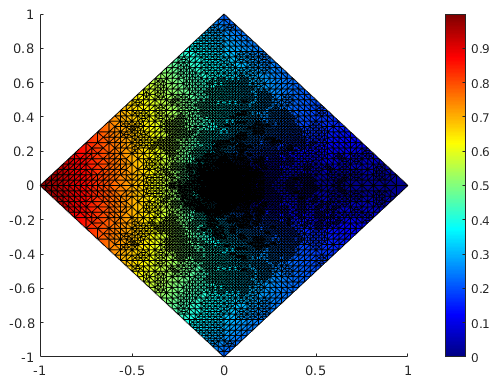}\label{Slit_domain_triangulation_Nitsche}}
	\hspace{0.3 cm}	
	\subfloat[]{\includegraphics[width=4.8cm,height=3.7cm]{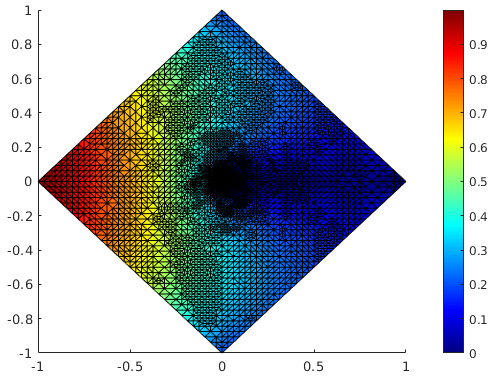}\label{Slit_domain_triangulation_dG}} 		
	\caption{ Adaptive mesh refinements: \protect\subref{Slit_domain_triangulation_Nitsche} $u_h$ for Nitsche's method with $\epsilon=0.6$ . \protect\subref{Slit_domain_triangulation_dG} $u_h$ for dGFEM with $\epsilon$=$1.$  }
	\label{Slit_omain}
\end{figure}
\begin{figure}[H]
	\centering
		\subfloat[]{\includegraphics[width=6.8cm,height=5.5cm]{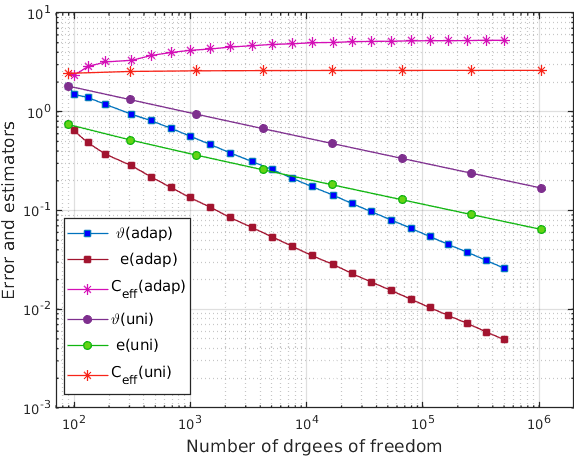}\label{Slit_domain_Error_estimator_Nitsche}}
	\hspace{0.4cm}
	\subfloat[]{\includegraphics[width=6.8cm,height=5.5cm]{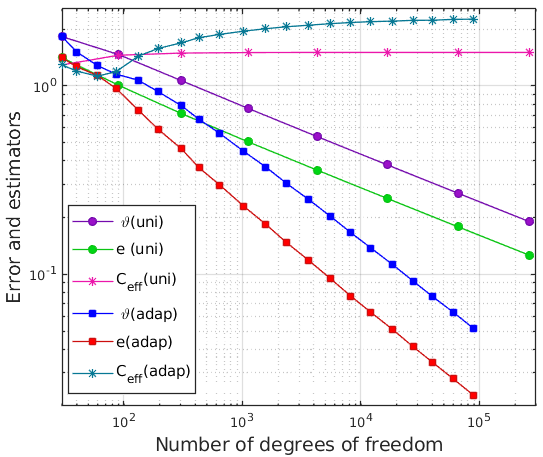}\label{Slit_domain_Error_estimator}}	
	\hspace{-0.1cm}		
	\caption{ Ndof versus $e,$ $\vartheta$ and $C_{eff}$ for \protect\subref{Slit_domain_Error_estimator_Nitsche}Nitsche's method with $\epsilon=0.6$ and \protect\subref{Slit_domain_Error_estimator}  dGFEM with $\epsilon=1$. }
	\label{Slit_domain_dG_solution_estimator_Nitsche}
\end{figure}

\section{Conclusions}\label{sec:conclusions} This manuscript focuses on {\it  a priori} and {\it a posteriori} error analysis for solutions with milder regularity than $\h^2$, and such solutions of lesser regularity are relevant, for example, in polygonal domains or domains with re-entrant corners that have boundary conditions of lesser regularity. We use Nitsche's method for our analysis; the {\it a priori} error analysis relies on medius analysis and these techniques are  extended to  dGFEM. In \cite{DGFEM},  $h$-$\epsilon$ dependent error estimates for $\h^2(\Omega)$ regular solutions are obtained, and this follows from an $\epsilon$ independent bound for the exact solution $\vertiii{\Psi}_2$, as established in \cite{Bethuel}. It is not clear if such estimates are feasible for exact solutions with milder regularity,  $\h^{1+\alpha}(\Omega),$ $0 < \alpha \le 1$,  since we do not have $\epsilon$-independent bounds for $\vertiii{\Psi}_{1+\alpha}$  at hand.
\noindent It may be possible to obtain such bounds for certain model problems, which would allow $h-\epsilon$ dependent estimates. The methods in this paper will extend to modelling problems with weak anchoring or surface energies, which would translate to a Robin-type boundary condition; some dynamical models e.g. Allen-Cahn type evolution equations, stochastic versions of the Ginzburg-Landau system (\ref{continuous nonlinear strong form}); modelling problems for composite material, such as ferronematics, which have both nematic and polar order etc. The overarching aim is to propose optimal estimates for the discretization parameter and number of degrees of freedom, for systems of second-order elliptic partial differential equations with lower order polynomial non-linearities, as a function of the model parameters e.g. $\epsilon$, and use these estimates for powerful new computational algorithms.
		\section*{Acknowledgements}
		R.M. gratefully acknowledges support from institute Ph.D. fellowship and  N.N. gratefully acknowledges the support by  DST SERB MATRICS grant MTR/2017/000 199. A.M acknowledges support from the DST-UKIERI and British Council funded project on "Theoretical and Experimental Studies of Suspensions of Magnetic Nanoparticles, their Applications and Generalizations" and support from IIT Bombay, and a Visiting Professorship from the University of Bath.

	\bibliographystyle{amsplain}
\bibliography{ReferencesLDG}
\begin{appendices}
\section{Appendix}
\noindent This section discusses the proofs of the local efficiency results in Lemmas \ref{lem 3.1}-\ref{local efficiency for L2 estimate}. The local cut off functions play an  important role to establish the local efficiency results. Consider the interior bubble function \cite{Oden, Verfurth} $\widehat{b}_T = 27\widehat{\lambda}_1 \widehat{\lambda}_2 \widehat{\lambda}_3 $ supported on a reference triangle $\widehat{T}$ with the barycentric coordinate functions $\widehat{\lambda}_1 , \widehat{\lambda}_2, \widehat{\lambda}_3$. For $T \in \mathcal{T}, $  
let $\mathcal{F}_T: \widehat{T} \rightarrow T  $ be a continuous, affine and invertible transformation. Define the bubble function on the element $T$ by $b_T = \widehat{b}_T \circ \mathcal{F}_T^{-1} $. Three edge bubble functions on the reference triangle $\widehat{T}$ are given by $\widehat{b}_1= 4 \widehat{\lambda}_2 \widehat{\lambda}_3$, $\widehat{b}_2= 4 \widehat{\lambda}_1 \widehat{\lambda}_3$ and $\widehat{b}_3= 4 \widehat{\lambda}_1 \widehat{\lambda}_2$. On the edge $E$ of any triangle $T \in \mathcal{T}$, define the edge bubble function to be $b_E: =  \widehat{b}_E \circ  \mathcal{F}_T^{-1}$, where $\widehat{b}_E$ is the corresponding edge bubble function on $\widehat{T}$. Here, $b_E$ is supported on the pair of triangles sharing the edge $E.$ 
\begin{lem}\cite{Oden,Verfurth}\label{lem 3.4}
	Let $\widehat{P} \subset H^1(\widehat{T})$ be a finite dimensional subspace on the reference triangle $\widehat{T}$ and consider $P= \{\widehat{v} \circ \mathcal{F}_T^{-1}:\widehat{v} \in  \widehat{P} \}$ to be the finite dimensional space of functions defined on $T$. Then the following inverse estimates hold for all  $v \in P$, 
	\begin{align}
	\norm{v}^2_{L^2(T)} \lesssim \int_T b_T v^2 \dx \lesssim \norm{v}^2_{L^2(T)}, \quad 
	\norm{ v}_{L^2(T)} \lesssim \norm{b_Tv}_{L^2(T)} +	 h_T\norm{\nabla (b_Tv)}_{L^2(T)} \lesssim \norm{ v}_{L^2(T)}. \label{b_T estimates} 
	\end{align}
	Let $E \subset  \partial T$ be an edge and $b_E$ be the corresponding edge bubble function supported on the patch of triangles $\omega_E$ sharing the edge $E$. Let $P(E)$ be the finite dimensional space of functions defined on $E$ obtained by mapping $\widehat{P}(\widehat{E})\subset H^1(\widehat{E}).$ Then for all $v \in P(E)$,
	\begin{align} 
	\norm{v}^2_{L^2(E)} \lesssim \int_E b_E v^2 \dx \lesssim \norm{v}^2_{L^2(E)}, \quad\,\,\, h_E^{ -\frac{1}{2}}	\norm{b_Ev}_{L^2(\omega_E)}+ h_E^{ \frac{1}{2}}\norm{\nabla (b_Ev)}_{L^2(\omega_E)} \lesssim \norm{ v}_{L^2(E)}, \label{b_E estimates}
	\end{align}
	where the hidden constants in $"\lesssim"$ are independent of $h_T$ and $h_E$.
\end{lem}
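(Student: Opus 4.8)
The plan is to establish all four chains of inequalities by the standard scaling argument between the reference triangle $\widehat{T}$ (or reference edge $\widehat{E}$) and a generic element $T$ (or edge $E$), combined with norm equivalence on finite-dimensional spaces. First I would fix the reference triangle $\widehat{T}$ and note that on the finite-dimensional space $\widehat{P}$ the three quantities $\widehat v \mapsto \norm{\widehat v}_{L^2(\widehat T)}$, $\widehat v \mapsto \big(\int_{\widehat T}\widehat b_{\widehat T}\,\widehat v^2\dx\big)^{1/2}$, and $\widehat v \mapsto \norm{\widehat b_{\widehat T}\widehat v}_{L^2(\widehat T)}+\norm{\nabla(\widehat b_{\widehat T}\widehat v)}_{L^2(\widehat T)}$ are all norms on $\widehat P$: the second is a norm because $\widehat b_{\widehat T}>0$ in the interior of $\widehat T$ and the only polynomial in $\widehat P$ vanishing a.e.\ is $0$; the third is a norm because $\widehat b_{\widehat T}$ vanishes only on $\partial\widehat T$ and $\widehat b_{\widehat T}\widehat v\equiv 0$ forces $\widehat v\equiv 0$. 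Since $\widehat P$ is finite-dimensional, all these norms are equivalent, with constants depending only on $\widehat P$ and $\widehat T$.

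Next I would transport these equivalences to a general element $T$ via the affine map $\mathcal F_T:\widehat T\to T$. The key bookkeeping facts are $|T|\simeq h_T^2|\widehat T|$, $\|D\mathcal F_T\|\lesssim h_T$, $\|D\mathcal F_T^{-1}\|\lesssim h_T^{-1}$ (shape-regularity of $\mathcal T$), and the change-of-variables identities $\norm{v}_{L^2(T)}^2 = |\det D\mathcal F_T|\,\norm{\widehat v}_{L^2(\widehat T)}^2$ and $\norm{\nabla v}_{L^2(T)}^2 \lesssim |\det D\mathcal F_T|\, h_T^{-2}\norm{\widehat\nabla \widehat v}_{L^2(\widehat T)}^2$ (and the reverse inequality with $h_T$ and $h_T^{-1}$ swapped). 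Applying these to $v=\widehat v\circ\mathcal F_T^{-1}$ and to $b_T v = (\widehat b_{\widehat T}\widehat v)\circ\mathcal F_T^{-1}$, the factors $|\det D\mathcal F_T|$ cancel in every ratio, while the gradient term in the second estimate of \eqref{b_T estimates} picks up exactly the compensating factor $h_T$ that appears in the statement. This yields \eqref{b_T estimates} with $h$-independent hidden constants.

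For the edge estimates \eqref{b_E estimates} I would argue analogously but track an extra scaling factor. On the reference configuration, $\norm{\widehat v}_{L^2(\widehat E)}\simeq \big(\int_{\widehat E}\widehat b_{\widehat E}\widehat v^2\big)^{1/2}$ and $\norm{\widehat b_{\widehat E}\widehat v}_{L^2(\widehat T)}+\norm{\widehat\nabla(\widehat b_{\widehat E}\widehat v)}_{L^2(\widehat T)}\lesssim \norm{\widehat v}_{L^2(\widehat E)}$ by the same finite-dimensional norm-equivalence reasoning (here one uses that $\widehat b_{\widehat E}$ is positive on the relative interior of $\widehat E$, and that $\widehat b_{\widehat E}\widehat v$, being the product of an affine-extension of a function on $\widehat E$ with the edge bubble, is controlled by $\widehat v|_{\widehat E}$). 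Scaling a $1$-dimensional $L^2$ norm over an edge of length $h_E$ produces a factor $h_E^{1/2}$, while scaling the $2$-dimensional norms over the patch $\omega_E$ produces $h_E$ for the $L^2$ part and $h_E^{0}$ for the gradient part after the $h_E^{-1}$ from $\nabla$; dividing by the edge-norm scaling $h_E^{1/2}$ gives the stated weights $h_E^{-1/2}$ and $h_E^{1/2}$. Here one must also invoke the finite-overlap property of the patches $\omega_E$ (bounded by shape-regularity) to pass from a single element to $\omega_E$, and use quasi-uniformity of edge/element sizes within $\omega_E$ so that $h_E\simeq h_T$ for $T\subset\omega_E$.

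The main obstacle, and the only genuinely delicate point, is the first (``$\lesssim$'') direction in $\norm{v}_{L^2(T)}\lesssim \norm{b_Tv}_{L^2(T)}+h_T\norm{\nabla(b_Tv)}_{L^2(T)}$ and its edge analogue: one must be sure that the map $\widehat v\mapsto \widehat b_{\widehat T}\widehat v$ (resp.\ $\widehat v\mapsto\widehat b_{\widehat E}\widehat v$) is injective on the relevant finite-dimensional space, which is where positivity of the bubble in the open element (resp.\ on the relative interior of the edge) is used; once injectivity is known, finite-dimensionality upgrades it to a two-sided norm bound automatically, and the rest is routine scaling. I do not expect to need anything beyond Ciarlet's standard affine-equivalence machinery \cite{ciarlet} and the explicit formulas for $\widehat b_{\widehat T}$ and $\widehat b_{\widehat E}$ given above the statement.
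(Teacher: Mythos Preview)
Your proposal is correct and follows the standard route (norm equivalence on a finite-dimensional reference space followed by affine scaling), which is precisely the argument given in the references \cite{Oden,Verfurth} that the paper cites. Note, however, that the paper itself does not supply a proof of this lemma at all: it is stated in the Appendix with a citation and then used as a black box in the proofs of Lemmas~\ref{lem 3.1}--\ref{local efficiency for L2 estimate}, so there is no ``paper's own proof'' to compare against beyond the literature reference. One small point worth tightening in your write-up is the edge case: in the second estimate of \eqref{b_E estimates} the function $v\in P(E)$ lives only on $E$, so $b_E v$ on $\omega_E$ tacitly requires an extension of $v$ from $\widehat E$ to $\widehat T$ (typically constant along the direction transverse to $\widehat E$); you allude to this with ``affine-extension'' but it would be cleaner to state the extension explicitly before invoking norm equivalence on the reference patch.
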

\begin{proof}[\textbf{Proof of Lemma \ref{lem 3.1}}]
	$(i)$	Let $T \in \mathcal{T}$ be arbitrary and $b_T$ be the interior bubble function supported on the triangle $T$.
		Choose $\displaystyle 
	\boldsymbol{\rho}_T:=
	\left\{
	\begin{array}{l}
	\big(-\Delta \Phi_{h}+2\epsilon^{-2}(\abs{\Phi_{h}}^2 -1)\Phi_{h}\big)b_T \quad \text{in } T\\
	0 \quad \text{in } \Omega \setminus  T
	\end{array}
	\right .
	$, utilize  \eqref{b_T estimates}, \eqref{continuous nonlinear} with $\Phi:=\boldsymbol{\rho}_T$ and apply an integration by parts for the first term (which is a zero term) on the right-hand side below  to obtain
	\begin{align}\label{lem 4.4.1}
	&	\vertiii{\boldsymbol{ \eta}_T}_{0,T}^2
	\lesssim \int_T \big(-\Delta \Phi_{h}+2\epsilon^{-2}(\abs{\Phi_{h}}^2 -1)\Phi_{h}\big) \cdot \boldsymbol{\rho}_T \dx 
	\notag\\&
	=  A_T(\Phi_{h}-\Psi, \boldsymbol{\rho}_T ) +(B_T(\Phi_{h},\Phi_{h},\Phi_{h},\boldsymbol{\rho}_T )-B_T(\Psi,\Psi,\Psi,\boldsymbol{\rho}_T ))+C_T(\Phi_{h}-\Psi,\boldsymbol{\rho}_T ) .
	\end{align}	
	Together with H\"older's inequality,  Lemma \ref{boundedness} and \eqref{b_T estimates},  the terms on the right-hand side of \eqref{lem 4.4.1} are estimated as 
	\begin{align}
	A_T(\Phi_{h}-\Psi, \boldsymbol{\rho}_T )\lesssim \vertiii{\nabla (\Phi_{h}-\Psi)}_{0,T}\vertiii{\nabla \boldsymbol{\rho}_T}_{0,T}\lesssim \vertiii{\Psi-\Phi_{h}}_{1,T} h_T^{-1}\vertiii{\boldsymbol{ \eta}_T}_{0,T}.\label{estimation_A}
	\\
	C_T(\Phi_{h}-\Psi,\boldsymbol{\rho}_T )\lesssim \epsilon^{-2} \vertiii{\Phi_h-\Psi}_{0,T}\vertiii{\boldsymbol{\rho}_T}_{0,T}\lesssim \epsilon^{-2} \vertiii{\Psi-\Phi_{h}}_{0,T} \vertiii{\boldsymbol{ \eta}_T}_{0,T},\label{estimation_C}
	\end{align}
	\begin{align}
	B_T(\Phi_{h},\Phi_{h},\Phi_{h},\boldsymbol{\rho}_T )-B_T(\Psi,\Psi,\Psi,\boldsymbol{\rho}_T )
	\lesssim &\epsilon^{-2}\vertiii{\Psi-\Phi_{h}}_{1,T}(\vertiii{\Psi-\Phi_{h}}_{1,T} (\vertiii{\Phi_h}_{1,T}+ \vertiii{\Psi}_{1,T})\notag \\& + \vertiii{\Psi}_{1,T}^2) h_T^{-1}\vertiii{\boldsymbol{ \eta}_T}_{0,T}.
	\label{3.33}
	\end{align}
	A combination of the above three displayed estimates in \eqref{lem 4.4.1} plus Lemma \ref{lem 3.4} establishes
	\begin{align}\label{estimate eta_T} 
	h_T\vertiii{\boldsymbol{ \eta}_T}_{0,T}  \lesssim	\vertiii{\Psi - \Phi_{h}}_{h, T} (1+ \epsilon^{-2} (1+ \vertiii{\Psi}_{1, T}^2 +\vertiii{\Psi-\Phi_{h}}_{h, T} (\vertiii{\Phi_h}_{1, T}+ \vertiii{\Psi}_{1,T}) )).
	\end{align}
	To find the estimate corresponding to $\boldsymbol{ \eta}_E$, consider the edge bubble function $b_E$ supported on the patch of triangles $\omega_E$ sharing the edge $E$. 
	Define 
	$\displaystyle
	\boldsymbol{\rho}_E:=\left\{
	\begin{array}{l}
	[\nabla \Phi_h \nu]b_E \quad \text{in } \omega_E\\
	0 \quad \text{in } \Omega \setminus \omega_E
	\end{array}
	\right.
	$
	and  use \eqref{b_E estimates}, $[\boldsymbol{\rho}_E] =  0$ for $E \in \mathcal{E}_h^i $ and an integration by parts to obtain
	\begin{align}\label{3.11.1}
	\vertiii{\boldsymbol{ \eta}_E}_{0,E}^2 & \lesssim \int_{E}[\nabla\Phi_{h} \nu] \cdot \boldsymbol{\rho}_E\ds 
	= \int_{E } [\nabla\Phi_{h} \nu]  \cdot  \{\ \boldsymbol{\rho}_E \} \ds +\int_{E} \{ \nabla\Phi_{h} \nu \} \cdot [\boldsymbol{\rho}_E] \ds \notag
	\\& =
	\sum_{T \in \omega_E} \int_T (\Delta \Phi_{h} \cdot \boldsymbol{\rho}_E + \nabla \Phi_{h} \cdot \nabla \boldsymbol{\rho}_E) \dx .
	\end{align} 
Add and subtract $\sum_{T \in \omega_E} \int_T2\epsilon^{-2}(\abs{\Phi_{h}}^2 -1)\Phi_{h} \cdot  \boldsymbol{\rho}_E \dx
	$ in the right-hand side of \eqref{3.11.1} to rewrite the expression with the help of $\boldsymbol{ \eta}_T= \Delta \Phi_{h}-2\epsilon^{-2}(\abs{\Phi_{h}}^2 -1)\Phi_{h}$ (with a $-\Delta \Phi_{h}=0$ added). The expression \eqref{continuous nonlinear} with $\Phi =  \boldsymbol{\rho}_E$, a re-grouping of terms and H\"older's inequality lead to 
	\begin{align}\label{lem 4.4.11}
	\vertiii{\boldsymbol{ \eta}_E}_{0,E}^2   &\lesssim 
	(\sum_{T \in \omega_E} \vertiii{\boldsymbol{ \eta}_T}_{0,T}^2)^{\frac{1}{2}}(\sum_{T \in \omega_E} \vertiii{\boldsymbol{\rho}_E }_{0,T}^2)^{\frac{1}{2}} +	\sum_{T \in \omega_E} (A_T(\Phi_{h}-\Psi, \boldsymbol{\rho}_E) +C_T(\Phi_{h}-\Psi, \boldsymbol{\rho}_E)\notag\\&\quad + (B_T( \Phi_{h}, \Phi_{h},  \Phi_{h},\boldsymbol{\rho}_E)-B_T( \Psi, \Psi, \Psi,\boldsymbol{\rho}_E))) .
	\end{align} 
	A combination of H\"older's inequality, Lemma \ref{boundedness} and \eqref{b_E estimates} yields
	\begin{align}
	\sum_{T \in \omega_E} A_T(\Phi_{h}- \Psi, \boldsymbol{\rho}_E)  \lesssim   \sum_{T \in \omega_E}\vertiii{\nabla(\Psi -\Phi_{h})}_{0,T} \vertiii{ \nabla \boldsymbol{\rho}_E}_{0,T}\lesssim h_E^{-\frac{1}{2}} \vertiii{ \boldsymbol{ \eta}_E}_{0,E} \vertiii{\nabla(\Psi -\Phi_{h})}_{0,\omega_E},\label{A_E}
	\\
	\sum_{T \in \omega_E} C_T(\Phi_{h}- \Psi, \boldsymbol{\rho}_E)\lesssim \epsilon^{-2}\sum_{T \in \omega_E} \vertiii{\Psi-\Phi_{h}}_{0,T} \vertiii{ \boldsymbol{\rho}_E}_{0,T}  \lesssim \epsilon^{-2}h_E^{\frac{1}{2}}\vertiii{ \boldsymbol{ \eta}_E}_{0,E}  \vertiii{\Psi-\Phi_{h}}_{0,\omega_E},\label{C_E}
	\end{align}	
		\begin{align}\label{lem 4.4.14}
	\sum_{T \in \omega_E} (B_T( \Phi_{h}, \Phi_{h},  \Phi_{h},\boldsymbol{\rho}_E)-B_T( \Psi, \Psi,  \Psi,\boldsymbol{\rho}_E) ) \lesssim & \epsilon^{-2} h_E^{-\frac{1}{2}} \vertiii{ \boldsymbol{ \eta}_E}_{0,E}\sum_{T \in \omega_E}\vertiii{\Psi-\Phi_{h}}_{1, T}(\vertiii{\Psi-\Phi_{h}}_{1, T} (\vertiii{\Phi_h}_{1, T}  \notag \\&+\vertiii{\Psi}_{1, T})+ \vertiii{\Psi}_{1, T}^2).
	\end{align}	
	The estimate of $\vertiii{\boldsymbol{ \eta}_T}_{0,T}$ in \eqref{estimate eta_T} and \eqref{b_E estimates}  together with the above three displayed estimates in \eqref{lem 4.4.11} lead to 
	\begin{align}\label{estimate eta_E}
	h_E^{\frac{1}{2}}  \vertiii{\boldsymbol{ \eta}_E}_{0,E} \lesssim \sum_{T \in \omega_E}	\vertiii{\Psi - \Phi_{h}}_{h, T} (1+ \epsilon^{-2} (1+ \vertiii{\Psi}_{1, T}^2 +\vertiii{\Psi-\Phi_{h}}_{h, T} (\vertiii{\Phi_h}_{1}+ \vertiii{\Psi}_{1, T}) )).
	\end{align}
	A combination of \eqref{estimate eta_T}	 and \ref{estimate eta_E} completes the proof of $(i)$ in Lemma \ref{lem 3.1}.
	
	\medskip	
	\noindent $(ii)$	For  $\Phi_{h}={ \rm I}_h \Psi$ in \eqref{3.33},  Lemma \ref{boundedness}$(v)$ and \eqref{b_T estimates} yield
	\begin{align}\label{estimation_B}
	B_T({\rm I}_{h} \Psi,{\rm I}_{h} \Psi, {\rm I}_{h} \Psi, \boldsymbol{\rho}_T)- B_T( \Psi, \Psi,  \Psi, \boldsymbol{\rho}_T) &\lesssim \epsilon^{-2}\vertiii{ \Psi}_{1+\alpha,T}^3 (h_T^{2\alpha}  \vertiii{\nabla \boldsymbol{\rho}_T}_{0,T}+ h_T^{1+\alpha}\vertiii{\boldsymbol{\rho}_T}_{0,T})\notag \\& \lesssim \epsilon^{-2}  \vertiii{ \Psi}_{1+\alpha,T}^3(h_T^{2\alpha}  + h_T^{2+\alpha} )h_T^{-1}\vertiii{\boldsymbol{ \eta}_T}_{0,T} .
	\end{align}	
	Substitute \eqref{estimation_A}, \eqref{estimation_C}, \eqref{estimation_B}  in \eqref{lem 4.4.1}   and utilize Lemma \ref{Interpolation estimate} to arrive at 
	\begin{align}\label{estimate eta_T1}
	h_T\vertiii{\boldsymbol{ \eta}_T}_{0,T}& \lesssim \vertiii{\nabla ({\rm I}_{h} \Psi-\Psi)}_{0,T}+ \epsilon^{-2} \vertiii{{\rm I}_{h} \Psi-\Psi}_{0,T}+  \epsilon^{-2} h_T^{2\alpha}  \vertiii{ \Psi}_{1+\alpha}^3 
	\notag \\& \lesssim h_T^{\alpha} (1+ \epsilon^{-2}h_T^{\alpha}(1+ \vertiii{ \Psi}_{1+\alpha}^2)) \vertiii{ \Psi}_{1+\alpha}.
	\end{align}
	A choice of  $\Phi_h = {\rm I}_h \Psi $ in \eqref{lem 4.4.14}, Lemma \ref{boundedness}$(v)$ and \eqref{b_E estimates} yield
	\begin{align}\label{B_E}
	&\sum_{T \in \omega_E}(	B_T({\rm I}_{h} \Psi,{\rm I}_{h} \Psi, {\rm I}_{h} \Psi, \boldsymbol{\rho}_E)- B_T( \Psi, \Psi,  \Psi, \boldsymbol{\rho}_E)) \lesssim \epsilon^{-2}\sum_{T \in \omega_E}\vertiii{ \Psi}_{1+\alpha, T}^3 (h_T^{2\alpha}  \vertiii{\nabla \boldsymbol{\rho}_E}_{0,T}+ h_T^{1+\alpha}\vertiii{\boldsymbol{\rho}_E}_{0,T})\notag \\& 
	\qquad \lesssim \epsilon^{-2} h_E^{-\frac{1}{2}}\vertiii{\boldsymbol{ \eta}_E}_{0,E} \sum_{T \in \omega_E} \vertiii{ \Psi}_{1+\alpha,T}^3(h_T^{2\alpha}  +h_E h_T^{2+\alpha} ) .
	\end{align}	
	Substitute \eqref{A_E}, \eqref{C_E}, \eqref{B_E} in \eqref{lem 4.4.11} and employ Lemma \ref{Interpolation estimate} to obtain
	\begin{align} \label{estimate eta_E1}
	h_E^{\frac{1}{2}}	\vertiii{\boldsymbol{ \eta}_E}_{0,E}
	\lesssim \sum_{T \in \omega_E} h_T^{\alpha} (1+ \epsilon^{-2}h_T^{\alpha}(1+ \vertiii{ \Psi}_{1+\alpha}^2)) \vertiii{ \Psi}_{1+\alpha}.
	\end{align}
	A combination of  \eqref{estimate eta_T1} and \eqref{estimate eta_E1}  concludes the proof of $(ii)$ in Lemma \ref{lem 3.1}.
\end{proof}
\noindent The proof of Lemma \ref{local efficiency for discrete infsup} (resp. \ref{local efficiency for L2 estimate}) follows analgous to the proof of Lemma \ref{lem 3.1}  with the choice of  
$$\boldsymbol{\rho}_T:=
\left\{
\begin{array}{ll@{:}}
(\Delta (\textrm{I}_h \boldsymbol \xi) +2 \epsilon^{-2}(\abs{\textrm{I}_h \Psi}^2  \Theta_{h} + 2 (\textrm{I}_h \Psi \cdot  \Theta_{h}) \textrm{I}_h \Psi -\Theta_{h}) )b_T \text{ in } T\\
0 \quad \text{in } \Omega \setminus  T
\end{array}
\right.
\text{ and }
\boldsymbol{\rho}_E:=\left\{
\begin{array}{l}
[\nabla(\textrm{I}_h\boldsymbol \xi) \nu] b_E \text{ in } \omega_E\\
0 \quad \text{in } \Omega \setminus  \omega_E
\end{array}
\right..
$$
$$ \bigg(\text {resp. } 
\boldsymbol{\rho}_T:=
\left\{
\begin{array}{l}
\big(G_h +\Delta ( \textrm{I}_{h}\boldsymbol{\chi}) -2\epsilon^{-2} (\abs{\textrm{I}_{h}\Psi}^2 \textrm{I}_{h}\boldsymbol{\chi} +2 (\Ihpsi \cdot \Ichi)\Ihpsi- \Ichi)\big)b_T \quad \text{in } T\\
0 \quad \text{in } \Omega \setminus  T
\end{array}
\right . $$
$$\text{ and }  \boldsymbol{\rho}_E:=\left\{
\begin{array}{l}
[ \nabla(\textrm{I}_h\boldsymbol{\chi}) \nu ]b_E \quad \text{in } \omega_E\\
0 \quad \text{in } \Omega \setminus  \omega_E
\end{array}
\right. \bigg).
$$
\end{appendices}

\end{document}